\newcommand{\CC}{\mathbb{C}}
\newcommand{\FF}{\mathbb{F}}
\newcommand{\NN}{\mathbb{N}}
\newcommand{\QQ}{\mathbb{Q}}
\newcommand{\RR}{\mathbb{R}}
\newcommand{\ZZ}{\mathbb{Z}}
\newcommand{\calC}{\mathcal{C}}
\newcommand{\calX}{\mathcal{X}}
\newtheorem{theorem}{Theorem}[section] 
\newtheorem{proposition}[theorem]{Proposition}
\newtheorem{lemma}[theorem]{Lemma}
\newtheorem{corollary}[theorem]{Corollary}
\newtheorem*{lemma*}{Lemma}
\theoremstyle{definition}
\theoremstyle{remark}
\newtheorem*{remark*}{Remark}
\newtheorem*{remarks*}{Remarks}
\newtheorem*{example*}{Example}
\newcommand{\card}[1]{\lvert#1\rvert}
\newcommand{\set}[2]{\left\{#1\,\mid\,#2\right\}}
\DeclareMathOperator{\Gal}{Gal}
\DeclareMathOperator{\res}{res}
\DeclareMathOperator{\Tr}{Tr}
\renewcommand{\theenumi}{\roman{enumi}}
\begin{document}

\title[Abelian  varieties]{On the number of points on abelian and Jacobian varieties over finite fields}

\author{Yves Aubry}
\address{Institut de Math\'ematiques de Toulon, Universit\'e du Sud Toulon-Var and
Institut de Math\'ematiques de Luminy, Aix-Marseille Universit\'e / CNRS, France}
\email{aubry@iml.univ-mrs.fr}

\author{Safia Haloui}
\address{Institut de Math\'ematiques de Luminy, Aix-Marseille Universit\'e / CNRS, France and
Department of Mathematics, Technical University of Denmark, Lyngby, Denmark}
\email{s.haloui@mat.dtu.dk}

\author{Gilles Lachaud}
\address{Institut de Math\'ematiques de Luminy, Aix-Marseille Universit\'e / CNRS , France}
\email{lachaud@univ-amu.fr}

\subjclass[2000]{14G15, 11G10, 11G25.}

\keywords{Abelian varieties over finite fields, Jacobians, zeta functions.}

\begin{abstract}
We give upper and lower bounds for the number of points on abelian varieties over finite fields, and lower bounds specific to Jacobian varieties. We also determine exact formulas for the maximum and minimum number of points on Jacobian surfaces.
\end{abstract}
\date{\today}
\maketitle
\tableofcontents

\section{Introduction}
\label{sec_intro}

This article has roughly a threefold aim. The first is to provide a series of upper and lower bounds for the number of points on an abelian variety defined over a finite field. A simple typical result is
$$(q + 1 - m)^{g} \leq \card{A(\FF_q)} \leq (q + 1 + m)^{g}$$
(Corollary \ref{SerreWeil} and \ref{AubryBound}). Here $A$ is an abelian variety of dimension $g$ defined over the field $\FF_{q}$ of $q$ elements, and $m$ is the integer part of $2 q^{1/2}$. This inequality improves on
$$
(q + 1 - 2 q^{1/2})^{g} \leq \card{A(\FF_q)} \leq (q + 1 - 2 q^{1/2})^{g},
$$
which is an immediate consequence of Weil's inequality. We provide as well bounds for $\card{A(\FF_{q})}$ depending on the trace of $A$. If, by chance, $A$ is the Jacobian of a curve or the Prym variety of a covering of curves, the trace is easily expressed in terms of the number of rational points on the corresponding curves. We obtain two other lower bounds depending on the \emph{harmonic mean} $\eta = \eta(A)$ of the numbers $q + 1 + x_{i}$, namely
$$
\frac{1}{\eta} = \frac{1}{g} \sum_{i = 1}^{g} \frac{1}{q + 1 + x_{i}} \, ,
$$
where
$$f_{A}(t) = \prod_{i = 1}^{g} (t^{2} + x_{i} t + q)$$
is the \emph{Weil polynomial of $A$}. Our second aim is to obtain specific lower bounds in the special case where $A = J_C$ is the Jacobian of a smooth, projective, absolutely irreducible algebraic curve $C$ defined over $\FF_q$. 
M. Martin-Deschamps and the third author proved in \cite{L-MD} that
$$\card{J_{C}(\FF_q)} \geq \eta \, \frac{q^{g-1}-1}{g}\frac{N+q-1}{q-1},$$
where $g$ is the genus of $C$ and $N = \card{C(\FF_q)}$. This article offers several improvements to this bound : for instance,
$$
\card{J(\FF_q)} \geq \Bigl( 1 - \frac{2}{q} \Bigr) \Bigl(q + 1 + \frac{N - (q + 1)}{g} \Bigr)^{g},
$$
and also
$$
\card{J_{C}(\FF_q)} \geq \frac{\eta}{g}
\left[ \binom{N + g - 2}{g - 2} +
\sum_{n = 0}^{g - 1}q^{g - 1 - n} \binom{N + n - 1}{n} \right].
$$
The third aim is to give exact values for the maximum and the minimum number of rational points on Jacobian varieties of dimension $2$, namely, to calculate, in the case $g = 2$, the numbers
$$J_q(g) = \max_{C} \card{J_{C}(\FF_q)} \quad \text{and} \quad j_q(g) = \min_{C} \card{J_{C}(\FF_q)},$$
where $C$ ranges over  the set of equivalence classes of smooth curves of genus $g$ over $\FF_q$. These numbers are the analogues for Jacobians of the numbers
$$N_q(g) = \max_{C} \card{C(\FF_q)} \quad \text{and} \quad n_q(g) = \min_{C} \card{C(\FF_q)},$$
introduced by J.-P. Serre. One has
$$(q + 1 - m)^{2} \leq j_{q}(2), \quadÊJ_{q}(2) \leq (q + 1 + m)^{2},$$
and these bounds are attained in most cases, with exceptions occurring when $q$ is special. It is worthwhile to point out that S. Ballet and R. Rolland obtained recently \cite{BalletRolland} asymptotic lower bounds on the number of points on Jacobian varieties; these results are distinct from those presented here.

The contents of this article are as follows. Section \ref{sec_abvar} is devoted to the number of points on general abelian varieties. In $\S$ \ref{subsec_upper}, we first prove an upper bound (Theorem \ref{bornetr}) obtained by H. G. Quebbemann in the case of Jacobians, and M. Perret in the case of Prym varieties. Then we state three sharper upper bounds, depending on the defect of $A$ or on a specific parameter $r$.

The lower bounds we discuss in $\S$ \ref{subsec_lower} are based on auxiliary results, predominantly on inequalities between classical means, and depend on the trace of $A$. The first one (Theorem \ref{BorneSpecht}) is based on Specht's inequality, and is symmetric to the upper bound of Theorem \ref{bornetr}. Another important result is Theorem \ref{SerreWeilTrace}, and Corollary \ref{SerreWeil} gives the unconditional lower bound stated at the beginning of this introduction.

The bounds stated in Theorem \ref{EtaBoundPure} and Proposition \ref{EtaBoundMixed} are expressed in terms of the harmonic mean $\eta$. In order to compare these bounds, we show in $\S$ \ref{subsec_lowerharmonic} that if $q \geq 8$, then $\eta \geq q + 1 - m$ (Proposition \ref{Harmonic}). The convexity method of Perret is used in $\S$ \ref{subsec_convexity} to give two other bounds in Theorem \ref{PerretOriginal} and Proposition \ref{perretAm}.

We discuss Jacobians in Section \ref{sec_jac}. The previous inequalities apply to Jacobians, depending on the number of points on the related curve, and this is stated in $\S$ \ref{subsec_JacAV}. The bounds for the number of points on Jacobians given in \cite{L-MD} depends on the identity 
$$
\frac{g}{\eta} \, \card{J_{C}(\FF_q)} =
\sum_{n = 0}^{g - 1} A_{n} + \sum_{n = 0}^{g - 2} q^{g - 1 - n} A_{n},
$$
where $A_{n}$ is the number of positive divisors on $C$ which are rational over $\FF_{q}$. In $\S$ \ref{subsec_virtual}, this identity is proved in an abstract framework, free of any geometric setting (Theorem \ref{GillesMireille3}).

By using the combinatorics of the exponential formula, various inequalities for the numbers $A_{n}$ are obtained in $\S$ \ref{subsec_specific}, depending on two conditions \eqref{ConditionB} and \eqref{ConditionN} which are satisfied by Jacobians. We discuss these conditions by giving in Propositions \ref{IneqPrimeDiv} and \ref{LargeGenusB} some results on the number $B_{n}$, which, in the case of Jacobians, is the number of rational prime cycles of degree $n$ on the curve. For instance, for a curve of genus $g$,
$$
n B_{n} \geq (q^{n/4} + 1)^{2}((q^{n/4} - 1)^{2} - 2 g).
$$
This leads to three new lower bounds. All of these are gathered in $\S$\ref{subsec_discussion}, where we compare them, and discuss their accuracy.
 
In section \ref{sec_jacsurf}, the last one, the complete calculation of $J_{q}(2)$ in Theorem \ref{Jq2} and of $j_{q}(2)$ in Theorem \ref{jq2} are worked out.

\section{Abelian varieties}
\label{sec_abvar}

\subsection{Upper bounds}
\label{subsec_upper}

Let $A$ be an abelian variety of dimension $g$ defined over the finite field  $\FF_q$ of characteristic $p$, with $q = p^{n}$. The \emph{Weil polynomial} $f_A(t)$ of $A$ is the characteristic polynomial of its Frobenius endomorphism $F_{A}$. Let $\omega_{1}, \dots, \omega_{g}, \overline{\omega}_{1}, \dots, \overline{\omega}_{g}$ be the complex roots of $f_A(t)$, with $\card{\omega_i} = q^{1/2}$ by Weil's inequality. For $1 \leq i \leq g$, we put $x_i = -(\omega_{i} + \overline{\omega}_{i})$, and we say that $A$ is \emph{of type} $[x_1,\dots ,x_g]$. The type of $A$ only depends on the isogeny class of $A$. Let
$$\tau = \tau(A) = - \sum_{i = 1}^{g} (\omega_{i} + \overline{\omega}_{i}) = \sum_{i=1}^{g} x_{i}.$$
The integer $\tau$ is the opposite of the trace of $F_{A}$, and we say that $A$ \emph{has trace} $-\tau$. The number of rational points on $A$ is $\card{A(\FF_q)} = f_A(1)$ and 
\begin{equation}
\label{card}
\card{A(\FF_q)}  = \prod_{i=1}^{g}(q+1+x_i)
\end{equation}
since 
$$f_A(t)=\prod_{i=1}^g(t-\omega_i)(t-\overline{\omega}_i)=\prod_{i=1}^g(t^2+x_it+q).$$
Since $\card{x_i} \leq 2 q^{1/2}$, one deduces from \eqref{card} the classical bounds:
$$(q+1-2 q^{1/2})^g\leq \card{A(\FF_q)} \leq(q+1+2 q^{1/2})^g.$$
The arithmetic-geometric inequality states that
$$(c_1\ldots c_k)^{1/k} \leq\frac{1}{k}(c_1+\cdots +c_k)$$
if $c_1 \dots c_k$ are non negative real numbers, with equality if and only if $c_1 = \dots = c_k$. Applying this inequality to  \eqref{card}, we obtain the following upper bound, proved by H. G. Quebbemann \cite{queb} in the case of Jacobians, and M. Perret \cite{per} in the case of Prym varieties:

\begin{theorem}
\label{bornetr}
Let $A/\FF_q$ be an abelian variety of dimension $g$ and trace $-\tau$. Then
$$\card{A(\FF_q)} \leq \Bigl(q+1+\frac{\tau}{g}\Bigr)^g,$$
with equality if and only if $A$ is of type $[x ,\dots ,x]$. \qed
\end{theorem}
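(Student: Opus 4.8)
The plan is to apply the arithmetic--geometric mean inequality directly to the factorization \eqref{card}, which makes the proof essentially a one-line calculation once the hypothesis of non-negativity is checked. First I would set $c_i = q + 1 + x_i$ for $1 \leq i \leq g$, so that \eqref{card} reads $\card{A(\FF_q)} = \prod_{i=1}^{g} c_i$, and I would verify that each $c_i$ is non-negative, since this is exactly the condition under which the arithmetic--geometric inequality may be applied. This follows from Weil's inequality: because $\card{x_i} \leq 2 q^{1/2}$, we have
$$c_i = q + 1 + x_i \geq q + 1 - 2 q^{1/2} = (q^{1/2} - 1)^2 \geq 0.$$

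Next I would invoke the arithmetic--geometric inequality with $k = g$ applied to the $c_i$, giving
$$\Bigl( \prod_{i=1}^{g} c_i \Bigr)^{1/g} \leq \frac{1}{g} \sum_{i=1}^{g} c_i = \frac{1}{g} \sum_{i=1}^{g} (q + 1 + x_i) = q + 1 + \frac{1}{g} \sum_{i=1}^{g} x_i = q + 1 + \frac{\tau}{g},$$
where the last equality uses the definition $\tau = \sum_{i=1}^{g} x_i$. Raising both sides to the $g$-th power and recalling $\card{A(\FF_q)} = \prod_{i=1}^{g} c_i$ then yields the desired bound
$$\card{A(\FF_q)} \leq \Bigl( q + 1 + \frac{\tau}{g} \Bigr)^{g}.$$

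For the equality statement I would appeal to the equality clause of the arithmetic--geometric inequality, which asserts that equality holds if and only if $c_1 = \dots = c_g$. Since $c_i = q + 1 + x_i$, this condition is equivalent to $x_1 = \dots = x_g$, that is, to $A$ being of type $[x, \dots, x]$ for a common value $x$. There is no real obstacle here; the only point requiring attention is the verification that the $c_i$ are non-negative so that the mean inequality and its equality characterization are legitimately available, and this is supplied by Weil's inequality as above.
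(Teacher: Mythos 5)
Your proof is correct and is essentially the paper's own argument: the theorem is stated there as an immediate consequence of applying the arithmetic--geometric inequality to the factorization \eqref{card}, with non-negativity of the factors $q+1+x_i$ guaranteed by Weil's inequality, exactly as you do. The equality discussion via the equality clause of the arithmetic--geometric inequality also matches the paper's intent.
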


Let $m = [2 q^{1/2}]$ where $[\alpha]$ denotes the integer part of the real number $\alpha$. Using the arithmetic-geometric inequality, J.-P. Serre \cite{serre0} proved that
\begin{equation}
\vert\tau\vert\leq gm, \label{trace}
\end{equation}
hence, by Theorem \ref{bornetr}:

\begin{corollary}
\label{AubryBound}
Let $A/\FF_q$ be an abelian variety of dimension $g$.Then
$$
\card{A(\FF_q)} \leq(q+1+m)^g
$$
with equality if and only if $A$ is of type $[m, \dots, m]$. \qed
\end{corollary}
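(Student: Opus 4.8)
The plan is simply to chain the two results already in hand. Theorem \ref{bornetr} gives $\card{A(\FF_q)} \leq \bigl(q+1+\tfrac{\tau}{g}\bigr)^g$, while Serre's inequality \eqref{trace} gives $\tau \leq \card{\tau} \leq gm$, hence $q+1+\tfrac{\tau}{g} \leq q+1+m$. Before raising to the $g$-th power I would record that every factor in \eqref{card} satisfies $q+1+x_i \geq q+1-2q^{1/2} = (q^{1/2}-1)^2 \geq 0$, so the base $q+1+\tfrac{\tau}{g}$ — being the arithmetic mean of these factors — is nonnegative. Since $t \mapsto t^g$ is nondecreasing on $[0,\infty)$, this yields
$$\card{A(\FF_q)} \leq \Bigl(q+1+\tfrac{\tau}{g}\Bigr)^g \leq (q+1+m)^g.$$

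For the equality statement I would analyze when \emph{both} inequalities in this chain are equalities. Equality in Serre's bound forces $\tau = gm$, i.e. $\tfrac{\tau}{g} = m$; equality in the arithmetic--geometric step underlying Theorem \ref{bornetr} forces all the $x_i$ to coincide, say $x_i = x$ for every $i$. Combining these gives $gx = \tau = gm$, hence $x = m$, so $A$ is of type $[m,\dots,m]$. The converse is immediate: a variety of type $[m,\dots,m]$ has $\card{A(\FF_q)} = (q+1+m)^g$ directly from \eqref{card}, which closes the equivalence.

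The main point to note is that there is essentially no hard step here — all the substance lives in Theorem \ref{bornetr} and in Serre's inequality \eqref{trace}. The only place demanding any care is the passage between the two $g$-th powers, which relies on the monotonicity of $t \mapsto t^g$ and hence on the nonnegativity of the base; this is why I would flag $(q^{1/2}-1)^2 \geq 0$ at the very start (and observe that for $q \geq 2$ the base is in fact strictly positive, so no degenerate behaviour can occur for even $g$).
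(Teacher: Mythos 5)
Your proof is correct and follows exactly the paper's route: the corollary is obtained by chaining Theorem \ref{bornetr} with Serre's inequality \eqref{trace}, and your equality analysis (both links in the chain must be equalities, forcing $\tau = gm$ and all $x_i$ equal, hence $x_i = m$) matches the intended argument. The extra care you take with nonnegativity of the base $q+1+\tau/g$ is a harmless refinement, not a departure from the paper's approach.
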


We say that $A$ (or $\tau$)  has \emph{defect} $d$ if $\tau = g m - d$.

\begin{proposition}
\label{defect}
If $A$ has defect $d$, with $d = 1$ or $d = 2$, then
$$
\card{A(\FF_q)}  \leq (q + m)^{d} (q + 1 + m)^{g - d}.
$$
\end{proposition}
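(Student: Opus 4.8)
The plan is to recast the inequality as a constrained optimisation over the type $[x_1,\dots,x_g]$ and then to exploit integrality. Set $M=q+1+m$ and $a_i=m-x_i$. By \eqref{card},
\[
\card{A(\FF_q)}=\prod_{i=1}^{g}(q+1+x_i)=\prod_{i=1}^{g}(M-a_i),
\]
and the defect hypothesis $\tau=gm-d$ reads $\sum_{i=1}^{g}a_i=d$. Weil's inequality gives $x_i\le 2q^{1/2}<m+1$, hence $a_i>-1$ for every $i$. The target thus becomes $\prod_{i=1}^{g}(M-a_i)\le M^{g-d}(M-1)^{d}$. The right-hand side is precisely the value produced by the integer type $[m^{g-d},(m-1)^{d}]$ (that is, $a_i=1$ for $d$ indices and $a_i=0$ otherwise), so the proposition asserts that, among admissible types of defect $d$, this integer type is extremal.

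First I would record the arithmetic input that makes the statement true, namely that the $x_i$ are the roots of a monic integer polynomial. Expanding $f_A(t)=\prod_{i=1}^{g}(t^2+x_it+q)$, the coefficient of $t^{2g-k}$ equals $e_k(x_1,\dots,x_g)$ plus a $\ZZ$-linear combination of products of $q$ with the lower symmetric functions $e_0,\dots,e_{k-1}$; since this coefficient and $q$ are integers, an induction on $k$ (started from $e_1=\tau$) gives $e_k(x_1,\dots,x_g)\in\ZZ$ for all $k$. Equivalently $R(a):=\prod_{i=1}^{g}(a-a_i)\in\ZZ[a]$ is monic, and both $\prod(M-a_i)=R(M)$ and the target $R_0(M)=M^{g-d}(M-1)^{d}$, where $R_0(a)=(a-1)^{d}a^{g-d}$, are integers. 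This integrality is indispensable: without it the continuous maximum of $\prod(M-a_i)$ under $\sum a_i=d$ is attained at $a_i=d/g$ and already exceeds the bound, so any correct argument must use that the $a_i$ form a complete set of conjugate algebraic integers.

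The endgame I would pursue is to compare $R$ with $R_0$. Both are monic of degree $g$ and share the coefficient of $a^{g-1}$, since their root-sums both equal $d$; hence $D:=R-R_0\in\ZZ[a]$ has degree at most $g-2$, with leading coefficient $e_2(a)-e_2^{(0)}$. Writing $p_2=\sum a_i^2=d^2-2e_2\in\ZZ$, Cauchy--Schwarz gives $p_2\ge d^2/g$; moreover, if all $a_i$ lay in $(-1,1)$ then $\lvert\prod a_i\rvert<1$ would be a nonnegative integer, forcing some $a_i=0$, and peeling off zeros (the quotient staying in $\ZZ[a]$) would force every $a_i=0$, contradicting $\sum a_i=d>0$. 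Ruling out this degenerate case together with $p_2\in\ZZ$ yields $p_2\ge d$, whence $e_2\le e_2^{(0)}$ and the leading coefficient of $D$ is $\le 0$. For $d=1,2$ I would then treat the two extremal integer types $[m^{g-1},m-1]$ and $[m^{g-2},(m-1)^2]$ explicitly and control the remaining lower-order coefficients of $D$ by the majorization principle, i.e.\ the Schur-concavity of $(a_i)\mapsto\sum\log(M-a_i)$ reinforced by integrality, to conclude $R(M)\le R_0(M)$.

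The hard part will be precisely this last step, because $e_2\le e_2^{(0)}$ only governs the top coefficient of $D$. The continuous relaxation, and even the relaxations using solely $\sum a_i=d$, or $p_2\ge d$, or $\prod(1+a_i)\ge 1$, all admit configurations that beat the bound: for $g=3,\ d=1$ the family $a_i=\tfrac23,\tfrac23,-\tfrac13$ satisfies every one of these constraints yet gives $\prod(M-a_i)=M^{3}-M^{2}+\tfrac{4}{27}>M^{2}(M-1)$. It is excluded only because $\prod a_i=-\tfrac{4}{27}\notin\ZZ$, that is, by the full system ``the $a_i$ are the roots of a monic integer polynomial''. I would therefore isolate a lemma: for $d\in\{1,2\}$, every monic-integer-polynomial family with $a_i>-1$ and $\sum a_i=d$ satisfies $R(M)\le R_0(M)$. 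Its proof must genuinely use that an irrational conjugate orbit cannot cluster, again via the norm estimate above, which supplies the ``spreading'' that majorization needs; the smallness of $d$ is what keeps the attendant symmetric-function bookkeeping finite and tractable.
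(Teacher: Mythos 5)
Your reduction (setting $a_i = m - x_i$, so that $\sum_{i} a_i = d$, $a_i > -1$, and comparing $R(M)=\prod_i(M-a_i)$ with $R_0(M)=M^{g-d}(M-1)^d$, $M = q+1+m$) is sound, and your diagnosis that the continuous relaxation fails --- so that any proof must use that the $a_i$ form a complete Galois-stable set of algebraic integers --- is exactly right. But the proposal does not prove the statement: everything is funneled into the closing ``lemma'' that every monic-integer-polynomial family with $a_i>-1$ and $\sum_i a_i=d\in\{1,2\}$ satisfies $R(M)\le R_0(M)$, and that lemma \emph{is} the proposition, merely restated in your normalization; no proof of it is offered. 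The material you do establish ($e_k\in\ZZ$ for all $k$, and $e_2\le e_2^{(0)}$ via $p_2\ge d$) controls only the single top coefficient of $D=R-R_0$, and the phrase ``majorization reinforced by integrality'' is not an argument: Schur-concavity of $(a_i)\mapsto\sum_i\log(M-a_i)$ pushes the maximum toward \emph{equal} $a_i$, i.e.\ in the wrong direction --- which is precisely why all your relaxations admit configurations beating the bound. So the hard step is not merely hard; it is absent.

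What fills this hole in the paper is a classification, not an inequality manipulation. The numbers $y_i=a_i+1=m+1-x_i$ form a Galois-stable system of totally positive algebraic integers with $\sum_i y_i=g+d$, and Serre (in the notes cited as \cite{serre2}, building on the Schur--Siegel--Smyth theory of traces of totally positive algebraic integers) determined \emph{all} such systems with trace excess $d=1$ or $2$: their irreducible constituents can only be $y-1$, $y-2$, $y-3$, $y^2-3y+1$, $y^2-4y+2$, $y^2-4y+1$ and $y^3-5y^2+6y-1$, which yields the finite list of types recorded in Table \ref{DefectTable}; the inequality $\card{A(\FF_q)}\le (q+m)^d(q+1+m)^{g-d}$ is then verified case by case. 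To complete your proof along your own lines you would have to prove (or cite) this classification, since for $d\le 2$ it is exactly your unproved clustering lemma made precise; once the list is in hand, the ``symmetric-function bookkeeping'' you allude to collapses to the finite inspection the paper performs.
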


\begin{proof}
J.-P. Serre gives in \cite{serre2} the list of types $[x_{1}, \dots, x_{g}]$ such that $d = 1$ or $d = 2$, and we prove the proposition by inspection. The various possibilities are described in Table \ref{DefectTable} below. In this table,
$$\varphi_{1} = (- 1 + \sqrt{5})/2, \quadÊ\varphi_{2} = (- 1 - \sqrt{5})/2,$$
$$
\omega_{i} = 1 - 4 \cos^{2} \frac{i \pi}{7}, \quad i = 1, 2, 3.
$$
Moreover, $\beta_{d}$ is the right hand side of the inequality and $b = q + 1 + m$. 
\begin{table}[hbtp]
$$
\begin{array}{|c||c|c|}
\hline
d & [x_{1}, \dots, x_{g}] & \beta_{d} - \card{A(\FF_q)} \\
\hline \hline
1 & (m, \dots, m, m - 1) & 0 \\
  & (m, \dots, m, m + \varphi_{1}, m + \varphi_{2}) & b^{g - 2}\\
\hline \hline
2 & (m, \dots, m, m - 1, m - 1) & 0 \\
  & (m, \dots, m, m - 2) & b^{g - 2}\\
  & (m, \dots, m, m + \sqrt{2} - 1, m - \sqrt{2} - 1) & 2 b^{g - 2} \\
  & (m, \dots, m, m + \sqrt{3} - 1, m - \sqrt{3} - 1) & 3 b^{g - 2} \\
  & (m, \dots, m, m - 1, m + \varphi_{1}, m + \varphi_{2}) & b^{g - 3}(b - 1)\\
  & (m, \dots, m, m + \varphi_{1}, m + \varphi_{2}, m + \varphi_{1}, m + \varphi_{2}) &
    b^{g - 4}  (2 b^2 - 2 b - 1)\\
  & (m, \dots, m, m + \omega_{1}, m + \omega_{2}, m + \omega_{3}) & b^{g - 3} (2 b - 1) \\
\hline
\end{array}
$$
\caption{Types with defect $1$ or $2$, with $b = q + 1 + m$.}
\label{DefectTable}
\end{table}
\end{proof}
Now assume $g \geq 2$. The following result generalizes somehow Proposition \ref{defect}. Let
$$
y_{i} = x_{i}  - \left[ \frac{\tau}{g}\right] \ (1 \leq i \leq g), \quad
r = \sum_{i = 1}^{g} y_{i} = \tau - g\left[ \frac{\tau}{g}\right],
$$
in such a way that $r$ is the remainder of the division of $\tau$ by $g$.

\begin{proposition}
\label{bornetrAm}
If $r = 1$ or $r = g - 1$, then
$$
\card{A(\FF_q)}  \leq
{\left(q+1+\left[ \frac{\tau}{g}\right]\right)}^{g - r}{\left(q+2+\left[ \frac{\tau}{g}\right]\right)}^{r}.
$$
\end{proposition}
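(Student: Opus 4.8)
The plan is to separate the arithmetic from the analysis: first extract from integrality the single scalar inequality $\sum_i y_i^2 \ge r$, and then prove a purely real-variable extremal inequality that uses this as its only hypothesis. I would write $a = q + 1 + \left[\tau/g\right]$, a positive integer, so that by \eqref{card} one has $\card{A(\FF_q)} = \prod_{i=1}^g (a + y_i)$ with $\sum_i y_i = r$ and every factor $a + y_i = q + 1 + x_i > 0$. Since the $y_i = x_i - \left[\tau/g\right]$ are the roots of $\prod_i(t - x_i)$ shifted by the integer $\left[\tau/g\right]$, and $\prod_i(t-x_i) \in \ZZ[t]$, all elementary symmetric functions $\sigma_k = \sigma_k(y_1,\dots,y_g)$ are integers, with $\sigma_1 = r$. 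The goal becomes $\prod_i(a+y_i) \le a^{g-r}(a+1)^r$.

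The arithmetic cornerstone is a bound on $\sigma_2$. From $\sigma_2 = \tfrac12\bigl(r^2 - \sum_i y_i^2\bigr)$ and the power-mean inequality $\sum_i y_i^2 \ge r^2/g$ one gets $\sigma_2 \le r^2(g-1)/(2g)$, hence $\sigma_2 - \binom{r}{2} \le r(g-r)/(2g)$. For $r = 1$ and $r = g-1$ the right-hand side equals $(g-1)/(2g) < 1$; as $\sigma_2$ and $\binom{r}{2}$ are integers this forces $\sigma_2 \le \binom{r}{2}$, that is $\sum_i y_i^2 \ge r^2 - 2\binom{r}{2} = r$. This is precisely where the hypothesis $r \in \{1, g-1\}$ enters: for a general remainder $r(g-r)$ may exceed $2g$ and the integrality step breaks down, which explains why only these two values are treated.

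Next I would prove that $\prod_i(a+y_i) \le a^{g-r}(a+1)^r$ for \emph{all} real $y_i$ subject only to $\sum_i y_i = r$, $\sum_i y_i^2 \ge r$, and $a + y_i > 0$. The feasible set is compact (positivity together with $\sum_i y_i = r$ bounds each $y_i$), so the product attains its maximum. On the hyperplane $\sum_i y_i = r$ the only critical point of $\prod_i(a+y_i)$ is $y_1 = \dots = y_g = r/g$, for which $\sum_i y_i^2 = r^2/g < r$; thus it lies outside the feasible set. On the positivity boundary some factor vanishes, so the product there is $0$. Hence the maximum is attained on the slice $\{\sum_i y_i = r,\ \sum_i y_i^2 = r\}$, where a Lagrange computation gives $1/(a+y_i) = \lambda + 2\mu y_i$; since $\mu = 0$ would return the excluded equal-coordinate point, $\mu \neq 0$ and the $y_i$ take at most two distinct values.

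The analytic heart, and the step I expect to be the main obstacle, is then the finite verification: for each partition of the coordinates into $j$ values equal to $p$ and $g-j$ equal to $p'$ with $jp + (g-j)p' = r$ and $jp^2 + (g-j)p'^2 = r$, one must show $(a+p)^j (a+p')^{g-j} \le a^{g-r}(a+1)^r$, with equality only for the integer configuration of $r$ coordinates equal to $1$ and $g-r$ equal to $0$. Solving the two moment equations parametrizes each family by a single variable (and, in the cases I have checked, eliminates all but the integer solution and one strictly smaller competitor), reducing the claim to a one-variable inequality in the integer $a$; carrying this out uniformly in $g$ is the delicate part, though the cases $g = 2, 3, 4$ are immediate and are consistent with Proposition \ref{defect} at $r = g-1$. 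Combining the two halves then finishes the argument: any $A$ with $r \in \{1, g-1\}$ satisfies $\sum_i y_i^2 \ge r$ by the cornerstone, hence lies in the feasible set, so the extremal inequality yields the stated bound.
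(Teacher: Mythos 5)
Your arithmetic cornerstone is correct as far as it goes: since $\prod_i(t-y_i)\in\ZZ[t]$, the quantity $\sigma_2$ is an integer, and combined with $\sum_i y_i^2\geq r^2/g$ this does force $\sum_i y_i^2\geq r$ whenever $r(g-r)<2g$, in particular for $r\in\{1,g-1\}$. The fatal problem is the second half of your plan: the purely real-variable extremal inequality you want --- that $\sum_i y_i = r$, $\sum_i y_i^2\geq r$ and $a+y_i>0$ alone imply $\prod_i(a+y_i)\leq a^{g-r}(a+1)^r$ --- is \emph{false}, already in the cases you describe as immediate. Take $g=3$, $r=g-1=2$ and $y=(4/3,\,1/3,\,1/3)$: then $\sum_i y_i=2$, $\sum_i y_i^2=2$, all factors are positive, yet
$$
\Bigl(a+\tfrac{4}{3}\Bigr)\Bigl(a+\tfrac{1}{3}\Bigr)^{2}=a(a+1)^{2}+\tfrac{4}{27}>a(a+1)^{2}
\quad\text{for every } a>0.
$$
Likewise for $g=4$, $r=3$, the point $y=(3/2,\,1/2,\,1/2,\,1/2)$ satisfies all your constraints and gives $(a+\tfrac{3}{2})(a+\tfrac{1}{2})^{3}=a(a+1)^{3}+\tfrac{a}{4}+\tfrac{3}{16}$. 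These are exactly the two-value configurations your Lagrange analysis produces, so the ``finite verification'' you defer is not merely delicate: it cannot succeed, because the maximum of your relaxed problem strictly exceeds the claimed bound. Note that these configurations cannot come from an abelian variety --- for instance $\sigma_3(4/3,1/3,1/3)=4/27\notin\ZZ$ --- but nothing in your relaxation excludes them, since you retain only $\sigma_1$ and $\sigma_2$. A second warning sign is that your integrality step needs only $r(g-r)<2g$, which also holds for $r=2$ and $r=g-2$, values the proposition does not (and should not, by this method) cover.

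What your second-moment bound cannot replace is control of \emph{subset sums} of the $y_i$, and this is precisely the arithmetic input of the paper's proof. There one forms, for $k=g-r$, the monic polynomial $f_k(T)=\prod_{H}(T-y_H)$ over all $k$-element subsets $H$, which lies in $\ZZ[T]$ by Galois stability; if every $y_H$ were positive, each would be a totally positive algebraic integer, and $\Tr y\geq\deg y$ would force $kr\geq g$. Since $r(g-r)<g$ holds exactly when $r\in\{1,g-1\}$ --- this is where the hypothesis genuinely enters, and it is strictly stronger than your condition --- one concludes that some block of $g-r$ of the $x_i$ has sum at most $(g-r)\left[\tau/g\right]$, hence the complementary block has sum at least $r(\left[\tau/g\right]+1)$. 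AM--GM applied separately to the two blocks, followed by the two-point convexity estimate (Lemma \ref{Convexity}), gives the bound. This single linear inequality on a block of coordinates is what rules out your counterexamples: for $g=3$, $r=2$ the paper's argument forces some $y_i\leq 0$, whereas $(4/3,1/3,1/3)$ has all coordinates positive. If you want to salvage your two-stage strategy, the scalar datum you extract from integrality must be a subset-sum inequality of this kind, not a bound on $\sum_i y_i^2$.
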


\begin{proof}
Take an integer $k$ with $1 \leq k \leq g - 1$. If $H$ belongs to the set $\mathfrak{P}_{k}$ of subsets of $\{1, \dots, g\}$ with $k$ elements, we define
$$
y_{H} = \sum_{i \in H} y_{i} \quad \text{and} \ f_{k}(T) = \prod_{H \in \mathfrak{P}_{k}} (T - y_{H}).
$$
The polynomials $f_{k}$ are in $\ZZ[T]$, since the family $(x_{i})$ is stable under $\Gal(\bar{\QQ} / \QQ)$. Moreover,
$$
\frac{\Tr y_{H}}{\deg f_{k}} = \frac{1}{\binom{g}{k}} \sum_{H \in \mathfrak{P}_{k}} y_{H} = 
\frac{1}{\binom{g}{k}}\binom{g-1}{k-1}\sum_{i=1}^{g} y_{i} = \frac{k r}{g}.
$$
Now recall that, if $y$ is a totally positive algebraic integer, then the arithmetic-geometric inequality implies that
$$
\Tr y \geq \deg y.
$$
Hence, if $y_{H} > 0$ for every $H \in \mathfrak{P}_{k}$, then $k r \geq g$. This shows that if $k r < g$, then, after renumbering  the numbers $x_{i}$ if necessary, we have
$$
\sum_{i=1}^{k} y_{i} \leq 0, \quad \text{i.e.} \
\sum_{i=1}^{k} x_{i} \leq k \left[ \frac{\tau}{g}\right].
$$
Now choose $k = g - r$. Then
$$
\sum_{i = g - r + 1}^{g} x_{i} \geq r(\left[ \frac{\tau}{g}\right] +1).
$$
Hence, according to the arithmetic-geometric inequality,
\begin{eqnarray*}
\card{A(\FF_q)}  = \prod_{i=1}^g(q+1+x_i)
& \leq &
\left(q + 1 + \frac{1}{g - r} \sum_{i = 1}^{g - r}x_i\right)^{g - r}
\left(q + 1 + \frac{1}{r} \sum_{i = g - r + 1}^{g} x_i \right)^{r}\\
& \leq &
\left(q+1+\left[ \frac{\tau}{g}\right]\right)^{g - r}
\left(q+2+\left[ \frac{\tau}{g}\right]\right)^{r},
\end{eqnarray*}
where the second inequality follows from Lemma \ref{Convexity} below.
To complete the proof of  Proposition \ref{bornetrAm}, it remains to establish that 
$r (g - r) < g$ if and only if $r = 1$ or $r = g - 1$. Observe that the inequality $r (g - r) < g$ holds in every case if $g \leq 3$. Now assume that $g \geq 4$ and let
$$
r_{\pm}(g) = \frac{1}{2} (g \pm (g^{2} - 4 g)^{1/2}).
$$
The inequality holds if and only if $r < r_{-}(g)$ or $r > r_{+}(g)$. If $g = 4$, then $r_{-}(4) = r_{+}(4) = 2$. If $g \geq 5$, then $1 < r_{-}(g) < 2$ and $g - 2 < r_{+}(g) < g - 1$.
\end{proof}

\begin{lemma}
\label{Convexity}
Let $0 \leq a \leq c \leq d \leq b$. If $(g - r)a + rb=(g - r)c + rd$, then
$$a^{g-r} b^r\leq c^{g-r} d^r.$$
\end{lemma}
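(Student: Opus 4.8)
The plan is to read the hypothesis geometrically. Both pairs $(a,b)$ and $(c,d)$ lie on the same line $(g-r)u + rv = M$ in the $(u,v)$-plane, where $M = (g-r)a + rb = (g-r)c + rd$, and the chain $a \leq c \leq d \leq b$ says exactly that $(c,d)$ is the more central of the two points, i.e.\ the one closer to the diagonal $u = v$. On such a line the weighted geometric mean $u^{g-r} v^r$ is largest at the center and decreases toward the extremities, so the inequality should drop out of a one-variable monotonicity argument along the segment joining the two points.

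First I would dispose of the degenerate case $a = 0$: the left-hand side then vanishes and the inequality is trivial, so I may assume $0 < a \leq c \leq d \leq b$ and take logarithms freely. Set $\phi(u) = (g-r)\log u + r\log v$, where $v = v(u) = \bigl(M - (g-r)u\bigr)/r$ is forced by the linear constraint. A short computation gives $\phi'(u) = (g-r)\bigl(\tfrac{1}{u} - \tfrac{1}{v}\bigr)$, which is nonnegative precisely when $u \leq v$. The key step is then to verify that $u \leq v$ persists all along the segment from $(a,b)$ to $(c,d)$: as $u$ increases from $a$ to $c$, the value $v$ decreases from $b$ to $d$, and since $u \leq c \leq d \leq v$ throughout, one has $\phi' \geq 0$ on $[a,c]$. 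Hence $\phi(a) \leq \phi(c)$, which is exactly $a^{g-r}b^r \leq c^{g-r}d^r$ after exponentiating.

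Alternatively, one can avoid calculus entirely, which I would include as a cleaner write-up. Put $\delta = c - a \geq 0$ and $\varepsilon = b - d \geq 0$, so the hypothesis reads $(g-r)\delta = r\varepsilon$. Using the elementary bounds $\log(1+x) \geq x/(1+x)$ and $\log(1+x) \leq x$ gives
$$
(g-r)\log\tfrac{c}{a} \;\geq\; (g-r)\tfrac{\delta}{c} \;=\; r\tfrac{\varepsilon}{c} \;\geq\; r\tfrac{\varepsilon}{d} \;\geq\; r\log\tfrac{b}{d},
$$
where the central equality is the hypothesis and the middle inequality uses $c \leq d$; rearranging yields the claim. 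In either form, the only real subtlety is the boundary bookkeeping (the vanishing case $a = 0$, and checking that $u \leq v$ holds along the entire interpolation rather than just at the endpoints); everything else is routine, and I do not expect any genuine obstacle beyond organizing these cases.
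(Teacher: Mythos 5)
Your proof is correct (in both variants) and follows a genuinely different route from the paper's. The paper's proof is a two-line concavity argument: by the hypothesis, the barycenter of $(a,\log a)$ and $(b,\log b)$ with weights $g-r$ and $r$ has the same abscissa as that of $(c,\log c)$ and $(d,\log d)$ with the same weights, and since $a\leq c\leq d\leq b$, the chord of the concave function $\log$ over the inner pair lies above its chord over the outer pair at this common abscissa, giving $(g-r)\log a+r\log b\leq(g-r)\log c+r\log d$ directly. Your first argument trades this static chord comparison for a monotonicity argument along the constraint line: you slide $(u,v(u))$ from $(a,b)$ to $(c,d)$ and check $\phi'(u)=(g-r)\bigl(\frac{1}{u}-\frac{1}{v(u)}\bigr)\geq 0$ on $[a,c]$ via $u\leq c\leq d\leq v(u)$, which is exactly the right verification and has the side benefit of showing the inequality is strict unless $a=c$. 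Your second argument is more elementary still, needing only the sandwich $x/(1+x)\leq\log(1+x)\leq x$; the displayed chain is valid, with the hypothesis entering as $(g-r)\delta=r\varepsilon$ and $a>0$ guaranteeing all denominators. Each version buys something: the paper's is the shortest and most conceptual; your monotonicity version works verbatim for any strictly concave function; your elementary version is the most self-contained. You are also more careful at the boundary: isolating $a=0$ fixes a case the paper's proof silently skips (it takes $\log a$, hence tacitly assumes $a>0$, which is harmless in the application since there $a\geq q+1-2q^{1/2}>0$). The one tacit assumption on your side is $0<r<g$ (you divide by $r$, and the sign analysis of $\phi'$ uses $g-r>0$); this too is harmless, since the lemma is invoked only with $1\leq r\leq g-1$, and it degenerates to a trivial equality when $r=0$ or $r=g$.
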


\begin{proof}
The barycenter of $(a, \log a)$ and  $(b, \log b)$ with the weights  $g-r$ and $r$ is
$$\Bigl(\frac{(g - r)a + r b}{g},\frac{(g - r)\log a + r \log b}{g}\Bigr)$$
and that of $(c, \log c)$ and $(d, \log d)$ with the same weights is   
$$\Bigl(\frac{(g-r)c+rd}{g},\frac{(g-r)\log c+r\log d}{g}\Bigr) = \Bigl(\frac{(g - r)a + r b}{g},\frac{(g - r)\log c + r \log d}{g}\Bigr),$$
and the result follows from the concavity of the logarithm.
\end{proof}
If $\tau$ has defect $1$, then
$$
\frac{\tau}{g} = m - \frac{1}{g}, \quad \left[ \frac{\tau}{g}\right] = m - 1, \quad r =  (g m - 1) - (g m - g) = g - 1,
$$
and Proposition \ref{bornetrAm} reduces to Proposition \ref{defect}.

\begin{corollary}
If $\tau = g m - g + 1$ (defect $g - 1$), then
$$
\card{A(\FF_q)}  \leq (q + m)^{g - 1} (q + 1 + m).
$$
\end{corollary}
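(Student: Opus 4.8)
The plan is to obtain this as an immediate special case of Proposition \ref{bornetrAm}, so the only real work is to check that a trace of defect $g - 1$ produces remainder $r = 1$. First I would divide $\tau = g m - g + 1$ by $g$: writing $\tau = g(m - 1) + 1$ exhibits the quotient and remainder directly, giving $\left[ \frac{\tau}{g}\right] = m - 1$ and $r = \tau - g\left[ \frac{\tau}{g}\right] = 1$. Here one uses $g \geq 2$, so that $0 < 1 < g$ and the Euclidean division is read off correctly; the degenerate case $g = 1$ is harmless, since then the defect $g - 1$ equals $0$, $\tau = m$, and the asserted bound is just the equality $\card{A(\FF_q)} = q + 1 + m$ of Corollary \ref{AubryBound}.

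Since $r = 1$, the hypothesis of Proposition \ref{bornetrAm} is met, and substituting $\left[ \frac{\tau}{g}\right] = m - 1$ and $r = 1$ into its conclusion yields $(q + 1 + (m - 1))^{g - 1}(q + 2 + (m - 1))^{1} = (q + m)^{g - 1}(q + 1 + m)$, which is exactly the desired inequality. No separate analysis is needed.

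I do not anticipate any genuine obstacle: the argument is pure bookkeeping, and its only conceptual content is that the defect-$(g - 1)$ case is the mirror image of the defect-$1$ case recorded in the remark preceding the statement. There, defect $1$ gives $r = g - 1$ and Proposition \ref{bornetrAm} collapses to Proposition \ref{defect}; here the roles of the exponents $g - r$ and $r$ are interchanged. The step to state with care is therefore simply the remainder computation (and the trivial $g = 1$ edge case), since everything else is inherited verbatim from Proposition \ref{bornetrAm}.
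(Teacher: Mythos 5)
Your proposal is correct and follows essentially the same route as the paper: compute $\left[\tau/g\right] = m-1$ and $r = \tau - g\left[\tau/g\right] = 1$ from $\tau = g(m-1)+1$, then apply Proposition \ref{bornetrAm} with $r = 1$. The extra remarks (the $g\geq 2$ bookkeeping and the trivial $g=1$ case) are harmless additions to what the paper does.
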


\begin{proof}
Here
$$
\frac{\tau}{g} = m -  1 + \frac{1}{g}, \quad \left[ \frac{\tau}{g}\right] = m - 1, \quad r =  g m - g + 1 - (g m - g) = 1
$$
and the result follows.
\end{proof}

\begin{remark*}
Smyth's Theorem \cite[p. 2]{Smyth} asserts that if $x$ is a totally positive algebraic integer, then with finitely many exceptions, explicitly listed,
$$
\Tr x \geq 1.7719 \deg x.
$$
From this one deduces that the conclusion of Proposition \ref{bornetrAm} holds true for every $r$ if $g \leq 7$ and if the polynomials $x - 1$ and $x^{2} - 3 x + 1$ does not divide $f_{g - r}$.
\end{remark*}

\subsection{Lower bounds}
\label{subsec_lower}

The first lower bound for $\card{A(\FF_q)}$ is symmetrical to the upper bound given in Theorem \ref{bornetr}, and depends on \emph{Specht's ratio}, defined for $h \geq 1$ as
$$
S(h) = \frac{h^{1/(h - 1)}}{e \log h^{1/(h - 1)}}, \quad S(1) = 1.
$$
It is the least upper bound of the ratio of the arithmetic mean to the geometric mean of numbers $c_{1}, \dots, c_{n} \in [a,b]$ with $0 < a < b$ and $h = b/a > 1$, see \cite{Specht}, \cite{Fujii}. That is, the following \emph{Specht's inequality} holds:
$$
\frac{c_{1} + \dots + c_{n}}{n} \leq S(h) (c_{1} \dots c_{n})^{1/n},
$$
as a reverse of the arithmetic-geometric inequality. 

\begin{theorem}
\label{BorneSpecht}
If $q \geq 2$, let $M(q) = 1/S(h(q))$, where $S(h)$ is Specht's ratio and
$$
h(q) = \left(\frac{q^{1/2} + 1}{q^{1/2} - 1}\right)^{2}.
$$
Let $A/\FF_q$ be an abelian variety of dimension $g$ and trace $-\tau$ over $\FF_{q}$.
\begin{enumerate}
\item
\label{SPB1}
We have
$$
\card{A(\FF_q)} \geq M(q)^{g} \Bigl(q + 1 + \frac{\tau}{g} \Bigr)^{g}.
$$
\item
\label{SPB2}
In particular,
$$
\card{A(\FF_q)} \geq (1 - \frac{2}{q})^{g} \Bigl(q + 1 + \frac{\tau}{g} \Bigr)^{g},
$$
where $1 - 2/q$ has to be replaced by $0.261$ If $q = 2$. \qed
\end{enumerate}
\end{theorem}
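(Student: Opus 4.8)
The plan is to apply Specht's inequality directly to the factorization \eqref{card}. Set $c_i = q + 1 + x_i$ for $1\le i\le g$. Since $\card{x_i}\le 2q^{1/2}$ by Weil's inequality, each $c_i$ lies in the interval $[(q^{1/2}-1)^2,\,(q^{1/2}+1)^2]$, whose endpoint ratio is exactly
$$
h = \frac{(q^{1/2}+1)^2}{(q^{1/2}-1)^2} = \left(\frac{q^{1/2}+1}{q^{1/2}-1}\right)^{2} = h(q).
$$
Specht's inequality applied to $c_1,\dots,c_g$ then reads $\frac{1}{g}(c_1+\cdots+c_g)\le S(h(q))\,(c_1\cdots c_g)^{1/g}$, which rearranges to
$$
\card{A(\FF_q)}^{1/g} = (c_1\cdots c_g)^{1/g} \ge \frac{1}{S(h(q))}\cdot\frac{c_1+\cdots+c_g}{g} = M(q)\Bigl(q+1+\frac{\tau}{g}\Bigr),
$$
since $\frac{1}{g}\sum_i c_i = q+1+\tau/g$. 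Raising to the $g$-th power proves assertion \ref{SPB1}.

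For assertion \ref{SPB2} it suffices to show $M(q)\ge 1-2/q$, equivalently $S(h(q))\le q/(q-2)$, for $q\ge 3$, and to treat $q=2$ by direct evaluation: there $h(2) = 17+12\sqrt{2}$ gives $M(2) = 0.261\ldots$, which is the stated constant. The remaining inequality is where the work lies. Writing $L = L(q) = \dfrac{\log h(q)}{h(q)-1}$, Specht's ratio becomes $S(h(q)) = e^{L-1}/L$, so, with $q\ge 3$, the claim is equivalent to
$$
\psi(q) := (L-1) - \log L + \log\Bigl(1-\tfrac{2}{q}\Bigr) \le 0.
$$

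I would establish this by reducing to a single-variable calculus problem. Substituting $s = q^{1/2}$ one has $h(q)-1 = 4s/(s-1)^2$ and $\log h(q) = 2\log\frac{s+1}{s-1}$, so $L$ is an explicit elementary function of $s$ on $[\sqrt{3},\infty)$. The expansion $L-1 = -2/s + \tfrac{4}{3}s^{-2}+O(s^{-3})$ shows that, as $q\to\infty$, both $(L-1)-\log L$ and $-\log(1-2/q)$ tend to $0$ with the same leading term $2/q$, so they cancel to first order and $\psi(q)\to 0$; a second-order computation (in which the $s^{-3}$ terms cancel) yields $\psi(q)<0$ with $\psi(q)=O(q^{-2})$. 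The strategy is therefore to verify $\psi(q)<0$ for moderate $q$ directly from the explicit formula, and for large $q$ to confirm the sign by controlling the next term of the expansion, using the convexity of $\phi(L)=L-1-\log L$ about its minimum at $L=1$ together with the monotonicity of $L(q)$ in $q$.

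The main obstacle is precisely this last inequality. Because the leading asymptotics of $(L-1)-\log L$ and of $-\log(1-2/q)$ coincide, no crude bound on Specht's ratio—such as the elementary $L-1-\log L\le (1-L)^2/L$ valid for $0<L<1$—is sharp enough, and one must retain the genuine second-order behaviour of $S(h(q))$. Thus, while assertion \ref{SPB1} is a one-line consequence of Specht's inequality, the passage to the clean constant $1-2/q$ in assertion \ref{SPB2} is the delicate part of the argument, the bookkeeping in the second-order estimate being where care is required.
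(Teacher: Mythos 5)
Your argument is correct and is essentially the paper's own proof: part (1) is the identical application of Specht's inequality to $c_i = q+1+x_i$, whose arithmetic mean is $q+1+\tau/g$ and whose geometric mean is $\card{A(\FF_q)}^{1/g}$. For part (2) you make the same reduction to $M(q)\ge 1-2/q$ for $q\ge 3$ (with $M(2)=0.261\ldots$ handled separately) that the paper settles by quoting the expansion $M(q)=1-\frac{2}{q}+\frac{10}{9q^{2}}+O(q^{-3})$ and saying ``one checks''; your analysis of $\psi(q)=O(q^{-2})<0$ is that same verification, sketched in slightly more detail.
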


\begin{proof}
According to \eqref{card}, we apply Specht's inequality with $c_{i} = q + 1 + x_{i}$, $1 \leq i \leq g$. Then
$$
h = h(q), \quad
c_{1} \dots c_{g}  = \card{A(\FF_{q})}, \quad
\frac{c_{1} + \dots + c_{g}}{g} = q + 1 + \frac{\tau}{g},
$$
and this gives \eqref{SPB1}. The function $M(q)$ is increasing, and $M(2) = 0.261 \dots$. Also,
$$
M(q) = 1 - \frac{2}{q} + \frac{10}{9 q^{2}} + O(q^{-3})  \quad \text{if} \quad q \rightarrow \infty,
$$
and one checks that
$$
M(q) \geq 1 - \frac{2}{q} \quad \text{if} \quad q \geq 2.
$$
Hence, \eqref{SPB2} follows from \eqref{SPB1}.
\end{proof}

Theorems \ref{bornetr} and \ref{BorneSpecht}\eqref{SPB2} are summarized in the relation
\begin{equation}
\label{TraceBounds}
(1 - \frac{2}{q}) (q + 1 + \frac{\tau}{g}) \leq \card{A(\FF_q)}^{1/g} \leq q + 1 + \frac{\tau}{g} \, .
\end{equation}

It is natural to ask whether $\card{A(\FF_q)}$ has a lower bound symmetrical to Corollary \ref{AubryBound}, and the answer turns out to be in the affirmative.

\begin{lemma}
\label{Basic}
Let $\lambda_{1}, \dots, \lambda_{n}$ be non-negative real numbers, and
$$
F(T) = \prod_{i = 1}^{n} (T + \lambda_{i}) \in  \RR[T].
$$
Let
$$
\pi = \left( \prod_{i = 1}^{n} \lambda_{i} \right) ^{1/n}, \quad
\sigma = \frac{1}{n} \sum_{i = 1}^{n} \lambda_{i},
$$
and assume that $t \geq 0$ and $0 \leq \lambda \leq \pi$.
\begin{enumerate}
\item
\label{Holder1}
We have
$$
F(t) \geq (t + \lambda)^{n} + n (\sigma - \lambda) t^{n - 1},
$$
and in particular $F(t) \geq (t + \lambda)^{n}$.
\item
Moreover
\label{Holder2}
$$
F'(t) \geq n (t + \lambda)^{n - 1} + n(n - 1) (\sigma - \lambda) t^{n - 2},
$$
and in particular $F'(t) \geq n (t + \lambda)^{n - 1}.$
\end{enumerate}
These inequalities are strict unless $\lambda_{1} = \dots = \lambda_{n}$ and $\lambda = \pi$.
\end{lemma}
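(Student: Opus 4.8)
The plan is to expand everything in powers of $t$ and compare coefficients. Write $F(T) = \prod_{i=1}^{n}(T+\lambda_{i}) = \sum_{k=0}^{n} e_{k} T^{n-k}$, where $e_{k} = e_{k}(\lambda_{1},\dots,\lambda_{n})$ is the $k$-th elementary symmetric function; thus $e_{0}=1$, $e_{1}=\sum_{i}\lambda_{i}=n\sigma$, and $e_{n}=\prod_{i}\lambda_{i}=\pi^{n}$. The decisive input is the family of inequalities $e_{k}\geq \binom{n}{k}\pi^{k}$ for every $k$. I would obtain this from the arithmetic--geometric inequality applied to the $\binom{n}{k}$ monomials $\prod_{i\in H}\lambda_{i}$ (over $H\in\mathfrak{P}_{k}$) making up $e_{k}$: their geometric mean equals $\pi^{k}$, because each $\lambda_{i}$ occurs in exactly $\binom{n-1}{k-1}$ of them and $n\binom{n-1}{k-1}=k\binom{n}{k}$. (This is Maclaurin's inequality.) Since $0\leq\lambda\leq\pi$, it follows that $e_{k}\geq\binom{n}{k}\lambda^{k}$ as well, for all $k$.

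For part \eqref{Holder1}, expand $(t+\lambda)^{n}=\sum_{k=0}^{n}\binom{n}{k}\lambda^{k}t^{n-k}$ and subtract. The $k=0$ terms cancel, and the $k=1$ terms contribute exactly $(e_{1}-n\lambda)t^{n-1}=n(\sigma-\lambda)t^{n-1}$; this is the whole point, namely that $F$ and $(t+\lambda)^{n}$ are designed to match in their two top coefficients. Hence
$$F(t)-(t+\lambda)^{n} = n(\sigma-\lambda)t^{n-1}+\sum_{k=2}^{n}\bigl(e_{k}-\tbinom{n}{k}\lambda^{k}\bigr)t^{n-k},$$
and since $t\geq 0$ and every coefficient with $k\geq 2$ is non-negative, the sum is $\geq 0$, which is the first inequality. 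The \emph{in particular} form then follows because $\sigma\geq\pi\geq\lambda$ (the arithmetic--geometric inequality again), so the correction $n(\sigma-\lambda)t^{n-1}$ is itself $\geq 0$.

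Part \eqref{Holder2} is the same computation after differentiating. From $F'(t)=\sum_{k=0}^{n-1}(n-k)e_{k}t^{n-1-k}$ and, using the identity $n\binom{n-1}{k}=(n-k)\binom{n}{k}$, the expansion $n(t+\lambda)^{n-1}=\sum_{k=0}^{n-1}(n-k)\binom{n}{k}\lambda^{k}t^{n-1-k}$, I would get
$$F'(t)-n(t+\lambda)^{n-1} = n(n-1)(\sigma-\lambda)t^{n-2}+\sum_{k=2}^{n-1}(n-k)\bigl(e_{k}-\tbinom{n}{k}\lambda^{k}\bigr)t^{n-1-k},$$
where again the $k=1$ term is isolated and the remaining coefficients (each carrying a factor $n-k\geq 0$) are non-negative, yielding both stated inequalities.

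Finally, for the equality claim I would track the slack: for $t>0$, equality in either sharp inequality forces every coefficient $e_{k}-\binom{n}{k}\lambda^{k}$ with $k\geq 2$ to vanish. Taking $k=n$ gives $\pi^{n}=\lambda^{n}$, i.e.\ $\lambda=\pi$; the equality case of the arithmetic--geometric inequality used for $e_{2}=\binom{n}{2}\pi^{2}$ then forces $\lambda_{1}=\dots=\lambda_{n}$ (when $\pi>0$). Conversely, if all the $\lambda_{i}$ are equal and $\lambda=\pi$, then $F(T)=(T+\lambda)^{n}$ and $\sigma=\lambda$, so all four inequalities become equalities. The only mildly delicate points I anticipate are the degenerate case $\pi=0$ (which forces $\lambda=0$ and makes some coefficients vanish automatically) and keeping the coefficient bookkeeping exact at orders $t^{n-1}$ and $t^{n-2}$; the substance of the whole argument is the single elementary-symmetric estimate $e_{k}\geq\binom{n}{k}\pi^{k}$.
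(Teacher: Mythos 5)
Your proof is correct and follows essentially the same route as the paper's: the paper's quantity $p_{k}^{1/\binom{n}{k}}$ is exactly the geometric mean of the $\binom{n}{k}$ monomials of $e_{k}$ (equal to $\pi^{k}$), and the paper likewise derives $s_{k} \geq \binom{n}{k}\lambda^{k}$ from the arithmetic--geometric inequality and then compares coefficients in the expansions of $F(t)$ and $(t+\lambda)^{n}$, handling part (ii) by differentiating. Your treatment of the equality case and of the degenerate situation $\pi = 0$ is if anything slightly more careful than the paper's, which simply excludes the all-equal case at the outset.
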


\begin{proof}
We exclude the case where $\lambda_{1} = \dots = \lambda_{n}$. Let $\mathfrak{P}_{k}$ be the set of subsets of $\{1, \dots, n\}$ with $k$ elements, and put
$$p_{k} = \prod_{H \in \mathfrak{P}_{k}} \prod_{i \in H} \lambda_{i}.$$
For $1 \leq i \leq n$, the coefficient of $\lambda_{i}$ in $p_{k}$ is equal to the number of subsets in $\mathfrak{P}_{k}$ containing $i$, and this number is equal to $\binom{n - 1}{k - 1}$. Therefore
$$
p_{k} = \prod_{i = 1}^{n} \lambda_{i}^{\binom{n - 1}{k - 1}} = \pi^{\binom{n - 1}{k - 1} n} \geq
\lambda^{\binom{n - 1}{k - 1} n}, \quad \text{hence} \quad p_{k}^{1/\binom{n}{k}} \geq \lambda^{k}.
$$
For $0 \leq k \leq n$, let
$$s_{k} = \sum_{H \in \mathfrak{P}_{k}} \prod_{i \in H} \lambda_{i}$$
be the \emph{elementary symmetric function} of degree $k$ in the numbers $\lambda_{1}, \dots, \lambda_{n}$. The arithmetic-geometric inequality implies
$$
p_{k}^{1/\binom{n}{k}} < \binom{n}{k}^{- 1} s_{k}, \quad \text{hence} \quad
\binom{n}{k} \lambda^{k} < s_{k}.
$$
Putting $s_{0} = 1$, and since $s_{1} = n \sigma$, we obtain
\begin{eqnarray*}
F(t) = \sum_{k = 0}^{n} s_{k} t^{n - k} & = & \sum_{k = 0}^{n} \binom{n}{k} \lambda^{k} t^{n - k} +
\sum_{k = 0}^{n} \left( s_{k} - \binom{n}{k} \lambda^{k} \right) t^{n - k}
\\ & > &
(t + \lambda)^{n} + n (\sigma - \lambda) t^{n - 1},
\end{eqnarray*}
which proves the first inequality of \eqref{Holder1}, and of course the second, since $\sigma - \lambda \geq 0$. Also,
$$F'(t) = \sum_{k = 0}^{n - 1} (n - k) s_{k} t^{n - 1 - k},$$
and \eqref{Holder2} is proved along the lines of the proof of \eqref{Holder1}.
\end{proof}

\begin{remark*}
With the notation of Lemma \ref{Basic}, the basic inequality
$$
F(t) \geq (t + \lambda)^{n}
$$
is just a consequence of the following H\"older's inequality, where $x_{1}, \dots, x_{n}$ and  $y_{1}, \dots, y_{n}$ are non negative real numbers:
$$
\prod_{i = 1}^{n} (x_{i} + y_{i}) >
\left( \prod_{i = 1}^{n} x_{i}^{1/n} + \prod_{i = 1}^{n} y_{i}^{1/n} \right)^{n},
$$
unless $x_{1} = \dots = x_{n}$ and $y_{1} = \dots = y_{n}$.
\end{remark*} 

The \emph{real Weil polynomial} of $A$ is
$$h_{A}(t) = \prod_{i = 1}^{g}(t + x_{i}).$$
Then $f_{A}(t) = t^{g} h_{A}(t + q t^{-1})$ and $\card{A(\FF_q)} = h_{A}(q + 1)$.

\begin{theorem}
\label{SerreWeilTrace}
Let $A/\FF_q$ be an abelian variety of dimension $g$ and trace $-\tau$. Then
$$ \card{A(\FF_q)}  \geq (q + 1 - m)^{g} + (q - m)^{g - 1}  (g m + \tau).$$
\end{theorem}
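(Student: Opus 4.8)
The plan is to recognize the two terms on the right-hand side as the output of Lemma \ref{Basic}\eqref{Holder1} for a well-chosen shift $t$ and parameter $\lambda$. Concretely, I would apply that lemma with $n = g$, with $t = q - m$, and with $\lambda_i = (q + 1 + x_i) - t = x_i + m + 1$ for $1 \le i \le g$, so that $F(t) = \prod_{i=1}^g (t + \lambda_i) = \prod_{i=1}^g (q + 1 + x_i) = \card{A(\FF_q)}$ by \eqref{card}. The hypotheses $t \ge 0$ and $\lambda_i \ge 0$ hold: one has $m = [2q^{1/2}] \le q$ for every prime power $q$, so $t = q - m \ge 0$; and since $m \le 2q^{1/2} < m+1$ gives $\card{x_i} \le 2 q^{1/2} < m + 1$, each $\lambda_i = x_i + m + 1$ is strictly positive.

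With these values, $\sigma = \frac1g\sum_{i=1}^g \lambda_i = m + 1 + \tau/g$. I would then take $\lambda = 1$, so that $(t+\lambda)^g = (q + 1 - m)^g$ and $g(\sigma - \lambda)t^{g-1} = g(m + \tau/g)(q-m)^{g-1} = (gm + \tau)(q-m)^{g-1}$. Thus Lemma \ref{Basic}\eqref{Holder1} delivers precisely
$$\card{A(\FF_q)} = F(t) \ge (q+1-m)^g + (gm+\tau)(q-m)^{g-1},$$
which is the assertion, provided the admissibility condition $\lambda \le \pi$ required by the lemma is verified.

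The one substantive point, and the step I expect to be the crux, is therefore checking that $\pi = \bigl(\prod_{i=1}^g \lambda_i\bigr)^{1/g} \ge 1$, equivalently $\prod_{i=1}^g (x_i + m + 1) \ge 1$. I would argue this by integrality: the product equals $h_A(m+1)$, the value of the real Weil polynomial $h_A(t) = \prod_{i=1}^g(t + x_i)$ at the integer $m+1$. Since the family $(x_i)$ is stable under $\Gal(\bar{\QQ}/\QQ)$ and the $x_i$ are algebraic integers, $h_A \in \ZZ[t]$, whence $h_A(m+1) \in \ZZ$; and each factor $x_i + m + 1 = \lambda_i$ is strictly positive, so $h_A(m+1) > 0$. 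A positive integer is at least $1$, giving $\pi \ge 1$ and closing the argument. As a sanity check, for $g = 1$ the right-hand side collapses to $q + 1 + x_1 = \card{A(\FF_q)}$, so the bound is sharp there.
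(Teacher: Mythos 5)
Your proof is correct and follows exactly the paper's own argument: you apply Lemma \ref{Basic}\eqref{Holder1} to $F(t)=\prod_{i=1}^{g}(t+m+1+x_{i})$ with $\lambda=1$ and $t=q-m$, and you justify $\pi\geq 1$ by the same integrality/Galois-invariance observation that $\prod_{i=1}^{g}(m+1+x_{i})$ is a positive rational integer. The only differences are cosmetic (you spell out $t\geq 0$, $\lambda_{i}>0$, and the $g=1$ sanity check), so there is nothing to amend.
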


\begin{proof}
We apply Lemma \ref{Basic} to the polynomial
$$F(t) = h_{A}(t + m + 1) = \prod_{i = 1}^{g}(t + m + 1 + x_{i}).$$
Here $\lambda_{i} = m + 1 + x_{i}$, and
$$\pi = \prod_{i = 1}^{g} (m + 1 + x_{i})^{1/g} > 0.$$
Then $\pi^{g} \in \ZZ$, since this number is left invariant by $\Gal(\bar{\QQ} / \QQ)$, hence $\pi \geq 1$. 
Now apply Lemma \ref{Basic}\eqref{Holder1} with $\lambda = 1$ and $t = q - m$. We get
$$
h_{A}(q + 1) = F(q - m) \geq (q - m + 1)^{g} + g (q - m)^{g - 1}  (\sigma - 1),
$$
and the result follows by observing that $g(\sigma - 1) = g m + \tau$.
\end{proof}

\begin{remark*}
For $g = 1$, the inequality of Theorem \ref{SerreWeilTrace} is an equality (if $q\leq 4$ then $q - m = 0$ and we use the convention $0^{0} = 1$).
\end{remark*}

Serre's inequality \eqref{trace} implies $g m + \tau \geq 0$, hence :

\begin{corollary}
\label{SerreWeil}
Let $A/\FF_{q}$ be an abelian variety of dimension $g$. Then
$$\card{A(\FF_q)} \geq (q + 1 - m)^{g},$$
with equality if and only if $A$ is of type $[- m, \dots,- m]$.
\qed
\end{corollary}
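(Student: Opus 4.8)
The plan is to read the inequality off directly from Theorem \ref{SerreWeilTrace} together with Serre's inequality \eqref{trace}, and then to extract the equality case from the same two ingredients combined with the equality clause of the arithmetic-geometric inequality.

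First I would combine the two results. Theorem \ref{SerreWeilTrace} states
$$\card{A(\FF_q)} \geq (q+1-m)^g + (q-m)^{g-1}(gm+\tau),$$
while \eqref{trace} gives $gm+\tau \geq 0$. Since $m=[2q^{1/2}]$ satisfies $q \geq m$ for every $q$ (with equality exactly for $q\in\{2,3,4\}$), the factor $(q-m)^{g-1}$ is non-negative, so the second summand is non-negative and the bound $\card{A(\FF_q)} \geq (q+1-m)^g$ follows at once.

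For the equality statement, the easy direction is immediate: if $A$ is of type $[-m,\dots,-m]$, then \eqref{card} gives $\card{A(\FF_q)} = (q+1-m)^g$. For the converse I would set $c_i = q+1+x_i > 0$, so that $\card{A(\FF_q)} = \prod_{i} c_i$ and $\tfrac1g\sum_i c_i = q+1+\tau/g$, and first treat the case $q > m$, where $(q-m)^{g-1} > 0$. Here equality in $\card{A(\FF_q)} \geq (q+1-m)^g$ forces the term $(q-m)^{g-1}(gm+\tau)$ to vanish, hence $\tau = -gm$; consequently the arithmetic mean $\tfrac1g\sum_i c_i = q+1-m$ coincides with the geometric mean $\bigl(\prod_i c_i\bigr)^{1/g} = q+1-m$, and the equality case of the arithmetic-geometric inequality yields $c_1 = \dots = c_g$, that is $x_1 = \dots = x_g = -m$.

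The delicate point, and the step I expect to be the main obstacle, is the degenerate range $q = m$, i.e. $q \in \{2,3,4\}$, where $(q-m)^{g-1}=0$ for $g \geq 2$ and the extra term in Theorem \ref{SerreWeilTrace} carries no information, the theorem only asserting $\card{A(\FF_q)} \geq 1$. For $q=4$ one still concludes directly, since then $q+1-m = 1 = (q^{1/2}-1)^2$ is, by Weil's inequality, a lower bound for each factor $c_i$, so $\prod_i c_i = 1$ forces every $c_i=1$ and hence $x_i=-m$. For $q=2$ and $q=3$, however, $(q^{1/2}-1)^2 < 1$, the individual factors are no longer bounded below by $1$, and the converse cannot be obtained from the displayed bound alone; here one must analyze the admissible types directly, using the Galois-stability and integrality of the symmetric functions of the $x_i$ to rule out, or else to exhibit, non-constant types attaining the minimum. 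I would therefore flag these two values of $q$ explicitly, as they are precisely where the clean characterization requires a separate argument.
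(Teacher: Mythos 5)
Your proof of the inequality is precisely the paper's: the corollary is presented there as an immediate consequence of Theorem \ref{SerreWeilTrace} and Serre's inequality \eqref{trace}, and the paper offers no separate argument at all for the equality clause (the corollary carries a \qed with no proof). Your equality analysis is therefore more careful than the source, and it is correct where you carry it out: for $q > m$, vanishing of $(q-m)^{g-1}(gm+\tau)$ forces $\tau = -gm$, and then the arithmetic and geometric means of the $c_i = q+1+x_i$ are both equal to $q+1-m$, so AM--GM equality gives $x_1 = \cdots = x_g = -m$; for $q = 4$, Weil's bound $c_i \geq (q^{1/2}-1)^2 = 1$ closes the case. Note also that the paper's only implicit route to the equality clause, the strictness statement of Lemma \ref{Basic}, is useless exactly where you predicted trouble: Theorem \ref{SerreWeilTrace} invokes that lemma at $t = q - m$, and the strictness genuinely requires $t > 0$ (at $t = 0$ and $n \geq 2$ one has $F(0) = \pi^n$, which equals $\lambda^n$ as soon as $\lambda = \pi$, with no constraint on the individual $\lambda_i$); so it says nothing when $q = m$, i.e., for $q \in \{2,3,4\}$.

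Moreover, your caution about $q \in \{2,3\}$ is vindicated in the strongest sense: for $q = 2$ the ``only if'' direction of the corollary as stated is actually false. Take $(a_1,a_2) = (-3,5)$. Then $\vert a_1\vert = 3 \leq 2m = 4$ and $6\sqrt{2} - 4 \leq 5 \leq \tfrac{9}{4} + 4$, so \eqref{a12} holds, and $p = 2$ does not divide $a_2 = 5$; hence, by R\"uck's criterion as quoted in Section \ref{sec_jacsurf}, the polynomial
$$
t^4 - 3t^3 + 5t^2 - 6t + 4 = (t^2 + x_1 t + 2)(t^2 + x_2 t + 2), \qquad x_{1,2} = \frac{-3 \pm \sqrt{5}}{2} \, ,
$$
is the characteristic polynomial of an abelian surface $A/\FF_2$. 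This surface satisfies $\card{A(\FF_2)} = f_A(1) = 1 = (q+1-m)^g$, yet its type $[(-3+\sqrt{5})/2,\,(-3-\sqrt{5})/2]$ is not $[-2,-2]$ --- exactly the loophole you isolated, since for $q = m$ equality only forces $\prod_i (m+1+x_i) = 1$, not that each factor equals $1$. So the right conclusion is not merely to flag $q = 2, 3$: the equality characterization must be restricted to $q \geq 4$, where your argument completes the proof; for $q = 3$ one can check by hand that no counterexample of dimension $\leq 3$ exists, but settling it in general requires the kind of integrality and Galois-orbit analysis you describe.
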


The \emph{harmonic mean} $\eta = \eta(A)$ of the numbers $q + 1 + x_{i}$ is defined by
$$
\frac{1}{\eta} = \frac{1}{g} \sum_{i = 1}^{g} \frac{1}{q + 1 + x_{i}} \, ,
$$
hence
$$
\frac{1}{\eta} = \frac{1}{g} \sum_{i= 1}^{g} \frac{1}{\vert 1-\omega_{i}\vert ^2} \, .
$$
The classical inequality between harmonic and geometric means leads to:

\begin{theorem}
\label{EtaBoundPure}
Let $A/\FF_q$ be an abelian variety of dimension $g$. Then
$$\card{A(\FF_q)} \geq \eta^{g}. \rlap \qed$$
\end{theorem}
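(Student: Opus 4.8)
The plan is to recognize Theorem \ref{EtaBoundPure} as nothing more than the classical inequality between the harmonic and geometric means, applied to the positive numbers $c_{i} = q + 1 + x_{i}$ for $1 \leq i \leq g$. First I would record that these numbers are strictly positive, which is what makes both the harmonic mean and the inequality meaningful: by Weil's inequality $\card{x_{i}} \leq 2 q^{1/2}$, so that $c_{i} \geq q + 1 - 2 q^{1/2} = (q^{1/2} - 1)^{2} > 0$ for every admissible $q \geq 2$. Hence the reciprocals $1/c_{i}$ all exist and $\eta$ is well defined.

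The second step is to identify the two classical means with the quantities appearing in the statement. By \eqref{card}, the geometric mean of the $c_{i}$ is
$$
\left( \prod_{i = 1}^{g} c_{i} \right)^{1/g} = \card{A(\FF_q)}^{1/g},
$$
while, directly from the definition of $\eta$, the harmonic mean of the $c_{i}$ is exactly $\eta$.

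The harmonic-geometric mean inequality then asserts that the harmonic mean does not exceed the geometric mean, that is $\eta \leq \card{A(\FF_q)}^{1/g}$. Raising both sides to the power $g$ gives $\card{A(\FF_q)} \geq \eta^{g}$, which is the assertion of the theorem.

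I do not expect any genuine obstacle here: once positivity of the $c_{i}$ is checked via Weil's inequality, the whole content is a direct specialization of a textbook inequality, and the only point requiring care is the identification of $\eta$ and of $\card{A(\FF_q)}^{1/g}$ with the harmonic and geometric means respectively. If one wished to add an equality discussion, the harmonic and geometric means coincide if and only if all the $c_{i}$ are equal, i.e. if and only if $A$ is of type $[x, \dots, x]$.
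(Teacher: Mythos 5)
Your proof is correct and is exactly the paper's argument: the paper proves Theorem \ref{EtaBoundPure} by the single observation that ``the classical inequality between harmonic and geometric means'' applied to the numbers $q+1+x_{i}$, together with \eqref{card}, gives the result. Your added verification of positivity of the $q+1+x_{i}$ via Weil's inequality and the optional equality discussion are sound refinements of the same approach.
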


The next proposition is similar to Theorem \ref{SerreWeilTrace}.

\begin{proposition}
\label{EtaBoundMixed}
Let $A/\FF_q$ be an abelian variety of dimension $g$. Then
$$
\card{A(\FF_q)} \geq \eta (q + 1 - m)^{g - 1} +  \eta \, \frac{g - 1}{g} (q - m)^{g - 2} (g m + \tau).
$$
\end{proposition}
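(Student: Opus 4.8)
The plan is to mimic the proof of Theorem \ref{SerreWeilTrace}, but to invoke part \eqref{Holder2} of Lemma \ref{Basic} (the bound on $F'$) rather than part \eqref{Holder1}, after first expressing $\card{A(\FF_q)}$ through the derivative of the real Weil polynomial $h_{A}$.

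First I would record the \emph{logarithmic-derivative identity} that links $\eta$ to $h_{A}'$. Differentiating $h_{A}(t) = \prod_{i=1}^{g}(t+x_{i})$ gives $h_{A}'(t) = \sum_{j=1}^{g}\prod_{i \neq j}(t+x_{i})$, whence
$$
\frac{h_{A}'(q+1)}{h_{A}(q+1)} = \sum_{j=1}^{g}\frac{1}{q+1+x_{j}} = \frac{g}{\eta}.
$$
Since $h_{A}(q+1) = \card{A(\FF_q)}$, this yields the crucial relation $\card{A(\FF_q)} = \frac{\eta}{g}\, h_{A}'(q+1)$, which converts the desired estimate into a lower bound for $h_{A}'(q+1)$.

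Next, exactly as in Theorem \ref{SerreWeilTrace}, I would set $F(t) = h_{A}(t+m+1) = \prod_{i=1}^{g}(t+m+1+x_{i})$, so that $\lambda_{i} = m+1+x_{i} \geq 0$, $\sigma = m+1+\tau/g$, and $\pi^{g} = \prod_{i=1}^{g}(m+1+x_{i}) \in \ZZ_{>0}$, whence $\pi \geq 1$. Applying Lemma \ref{Basic}\eqref{Holder2} with $n=g$, $\lambda=1$ and $t=q-m$, and using $F'(q-m) = h_{A}'(q+1)$, gives
$$
h_{A}'(q+1) \geq g(q+1-m)^{g-1} + g(g-1)(\sigma-1)(q-m)^{g-2}.
$$
Multiplying by $\eta/g$ and inserting $g(\sigma-1) = g m + \tau$, so that $(g-1)(\sigma-1) = \frac{g-1}{g}(g m + \tau)$, produces the claimed inequality.

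There is no serious obstacle here; the argument is a routine variant of Theorem \ref{SerreWeilTrace}. The only point requiring care is the bookkeeping of constants: one must verify that the derivative identity contributes exactly the factor $\eta/g$ and that the coefficient $g(g-1)(\sigma-1)$ collapses to $\eta\,\frac{g-1}{g}(g m + \tau)$ after multiplication. The hypothesis $\pi \geq \lambda = 1$ of Lemma \ref{Basic} is guaranteed precisely as in Theorem \ref{SerreWeilTrace}, using $m = [2q^{1/2}]$ so that $m+1+x_{i} > 0$ for every $i$.
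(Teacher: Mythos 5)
Your proof is correct and is essentially identical to the paper's own argument: both use the logarithmic-derivative identity $\card{A(\FF_q)} = \frac{\eta}{g}\, h_{A}'(q+1)$ and then apply Lemma \ref{Basic}\eqref{Holder2} to $F(t) = h_{A}(t+m+1)$ with $\lambda = 1$ and $t = q-m$, exactly as in Theorem \ref{SerreWeilTrace}. Your write-up is in fact slightly more careful than the paper's, since you spell out the verification of the hypotheses ($\pi \geq 1$, $\lambda_i > 0$) and the constant bookkeeping $g(\sigma-1) = gm+\tau$.
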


\begin{proof}
Since
$$
\frac{h'_{A}(t)}{h_{A}(t)} = \sum_{i = 1}^{g} \frac{1}{t + x_{i}} \, ,
$$
we have
$$\card{A(\FF_q)} = \frac{\eta}{g}Ê\, h'_{A}(q + 1).$$
By applying Lemma \ref{Basic}\eqref{Holder2} with $F(t)$ as in the proof of Theorem \ref{SerreWeilTrace}, we obtain
$$
h'_{A}(q + 1) \geq g (q + 1 - m)^{g - 1} + (g - 1) (q - m)^{g - 2} (g m + \tau).
$$
leading to the second inequality.
\end{proof}

\subsection{Lower bound for the harmonic mean}
\label{subsec_lowerharmonic}
In order to compare the results of the previous section, a lower bound for $\eta$ is needed. The proof of the forthcoming proposition makes use of a simple observation, following C. Smyth \cite{Smyth} and J.-P. Serre \cite{SerreBertrand}: let
$$P(t) = \prod_{i = 1}^{g}(t - \alpha_{i}) \in \ZZ[t]$$
and $F \in \ZZ[t]$. The resultant $\res(P,F)$ is an integer, and if $\res(P,F) \neq 0$, then
\begin{equation}
\label{SmythLog}
\sum_{i = 1}^{g} \log \card{F(\alpha_{i})} = \log \card{\res(P,F)} \geq 0.
\end{equation}

\begin{proposition}
\label{Harmonic}
If $q \geq 8$, then
$$
\eta(A) \geq q + 1 - m .
$$
\end{proposition}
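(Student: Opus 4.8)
The plan is to prove the equivalent inequality $\sum_{i=1}^{g}\frac{1}{q+1+x_{i}}\le\frac{g}{q+1-m}$ and to extract it from a single pointwise estimate combined with \eqref{SmythLog}. I would first substitute $\mu_{i}=m+1+x_{i}$ and set $b=q-m$. Since $\card{x_{i}}\le 2q^{1/2}<m+1$, each $\mu_{i}$ lies in $(0,M]$ with $M=m+1+2q^{1/2}$, while $q+1+x_{i}=b+\mu_{i}$ and $q+1-m=b+1$. Exactly as in the proof of Theorem \ref{SerreWeilTrace}, the integer $\prod_{i}\mu_{i}=h_{A}(m+1)=\pi^{g}$ is positive, hence $\ge 1$; this is \eqref{SmythLog} applied to $P(t)=\prod_{i}(t-x_{i})$ and $F(t)=t+m+1$, and it gives $\sum_{i=1}^{g}\log\mu_{i}\ge 0$. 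Thus it suffices to show $\sum_{i=1}^{g}\frac{1}{b+\mu_{i}}\le\frac{g}{b+1}$.

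I would deduce this from the auxiliary inequality
$$\frac{1}{b+\mu}\le\frac{1}{b+1}-\lambda\log\mu,\qquad \mu\in(0,M],\quad(\ast)$$
for a suitable $\lambda\ge 0$: summing $(\ast)$ over $\mu=\mu_{i}$ and using $\sum_{i}\log\mu_{i}\ge 0$ yields the claim at once. The multiplier is forced. Equality holds in $(\ast)$ at $\mu=1$ (the value corresponding to the extremal type $[-m,\dots,-m]$ of Corollary \ref{SerreWeil}), so $\mu=1$ must be a minimum of $G(\mu):=\frac{1}{b+1}-\frac{1}{b+\mu}-\lambda\log\mu$, and the condition $G'(1)=0$ gives $\lambda=(b+1)^{-2}$.

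It then remains to verify $G\ge 0$ on $(0,M]$ with this $\lambda$. A direct computation gives
$$G'(\mu)=\frac{1}{(b+\mu)^{2}}-\frac{1}{(b+1)^{2}\mu}=\frac{-(\mu-1)(\mu-b^{2})}{(b+1)^{2}\,\mu\,(b+\mu)^{2}},$$
and since $b=q-m\ge 3$ throughout our range, $G$ decreases on $(0,1)$, increases on $(1,b^{2})$, and decreases on $(b^{2},\infty)$. As $G(1)=0$, this already forces $G\ge 0$ on all of $(0,b^{2}]$, so the only remaining point is the single inequality $G(M)\ge 0$. Moreover $M=m+1+2q^{1/2}$ is of order $q^{1/2}$ whereas $b^{2}=(q-m)^{2}$ is of order $q^{2}$, so $M\le b^{2}$ for all but the smallest admissible $q$, and there $G(M)\ge 0$ holds automatically.

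The inequality $G(M)\ge 0$ for the finitely many remaining small $q$ is the crux and the only place the hypothesis enters. Using the identity $b+M=(q^{1/2}+1)^{2}$ together with $M-1=m+2q^{1/2}$ and $b+1=q+1-m$, it rewrites as
$$\frac{(m+2q^{1/2})(q+1-m)}{(q^{1/2}+1)^{2}}\ \ge\ \log\!\left(m+1+2q^{1/2}\right).$$
The left-hand side grows like $4q^{1/2}$ and the right-hand side like $\tfrac12\log q$, so the inequality holds with a large margin for big $q$; the genuine (but routine) obstacle is to confirm it for the first few values $q\ge 8$ by direct substitution, retaining the true value $m=[2q^{1/2}]$ rather than the crude bound $m\le 2q^{1/2}$, which is too lossy near the boundary. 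One checks that the inequality just fails at $q=7$ (where $b=2$ and $M\approx 11.29$), which is precisely why the statement requires $q\ge 8$.
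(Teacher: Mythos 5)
Your proposal is correct, and at its core it \emph{is} the paper's own argument in shifted coordinates: your inequality $(\ast)$ with the forced multiplier $\lambda=(b+1)^{-2}$ is exactly the paper's key pointwise estimate $\log(x+m+1)\le (q+1-m)\bigl(1-\tfrac{q+1-m}{q+1+x}\bigr)$ rewritten via $\mu=x+m+1$, your $G$ satisfies $(b+1)^{2}G(1+t)=\tfrac{ct}{c+t}-\log(1+t)$ with $c=b+1$ (so your critical points $\mu=1$ and $\mu=b^{2}$ are the paper's $t=0$ and $t=c(c-2)$), and the summation step via \eqref{SmythLog} is identical. The one genuine difference is the endgame. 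The paper only uses the estimate on $(0,b^{2}]$, which requires $2m+1\le(q+1-m)(q-1-m)$ and hence $q\ge 11$; it then disposes of $q=9$ by the square argument and of $q=8$ by brute force. You instead continue the monotonicity analysis past $\mu=b^{2}$ and reduce everything to the single endpoint inequality $G(M)\ge 0$, which is a cleaner and more uniform treatment: for every prime power $q\ge 11$ one has $M\le b^{2}$ (since $M\le 4q^{1/2}+1$ and $b^{2}\ge(q-2q^{1/2})^{2}$), so the endpoint check is needed precisely for $q=8$ and $q=9$. You leave that check as ``routine'' without performing it; since it is the only place the hypothesis $q\ge 8$ enters, you should carry it out explicitly, and it does succeed: for $q=8$ ($m=5$, $b=3$) the rewritten inequality reads $4(5+2\sqrt{8})/(\sqrt{8}+1)^{2}\approx 2.91\ \ge\ \log(6+2\sqrt{8})\approx 2.46$, and for $q=9$ ($m=6$, $b=3$) it reads $3\ \ge\ \log 13\approx 2.56$. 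With those two lines added, your proof is complete; your observation that the endpoint inequality fails at $q=7$ is also correct and explains the hypothesis.
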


\begin{proof}
(a)
\begin{itshape}
Assume $c > 2$. If $- 1 < t \leq c(c - 2)$, then
$$
\log(1 + t) \leq \frac{c t}{c + t} \, . 
$$
\end{itshape}
Indeed, if
$$
f(t) = \frac{c t}{c + t} - \log(1 + t)$$
then $$f'(t) = \frac{t(c^{2} - 2 c - t)}{(1 + t)(c+ t)^{2}}.
$$
Thus $f'(t) \leq 0$ if $-1 \leq t < 0$ and $f'(t) \geq 0$ if $0 \leq t \leq c(c - 2)$. This proves our statement, since $f(-1) = + \infty$ and $f(0) = 0$.

\bigskip

(b)
\begin{itshape}
If $q \geq 8$, and if $\card{x} \leq m + 1$, then
$$
\log (x + m + 1) \leq ( q + 1 - m) \left(1 - \frac{q + 1 - m}{q + 1 + x} \right).
$$
\end{itshape}
Let $c = q + 1 - m$. If $q > 5$, then $q + 1 - m > 2$, and if $q \geq 11$, then
$$
2 m + 1 \leq (q + 1 - m)(q - 1 - m).
$$
Putting $t = x + m$, the hypotheses of (a) are satisfied, and
$$
\log (x + m + 1) \leq \frac{(q + 1 - m)(x + m)}{q + 1 + x}
= ( q + 1 - m) \left(1 - \frac{q + 1 - m}{q + 1 + x} \right).
$$

\bigskip

(c)
Now use \eqref{SmythLog} with $F(t) = t + m + 1$ and $P(t) = (-1)^{g} h_{A}(-t)$. Obviouly $\res(P,F) \neq 0$, hence
$$
\sum_{i = 1}^{g} \log (x_{i} + m + 1) \geq 0.
$$
One deduces from (b) that
$$
\sum_{i = 1}^{g} \left(1 - \frac{q + 1 - m}{q + 1 + x_{i}} \right) \geq 0, \quad  \text{hence} \quad
(q + 1 - m) \sum_{i = 1}^{g} \frac{1}{q + 1 + x_{i}} \leq g.
$$
Finally, the conclusion is obvious if $q = 9$ (a square), and verified by brute force if $q = 8$.
\end{proof}

\begin{remarks*}
(i)
Proposition \ref{Harmonic} shows that Theorem \ref{EtaBoundPure} is stronger that Theorem \ref{SerreWeil} if $q \geq 8$.

(ii)
It can happen that $\eta < q + 1 - m$. Take $q = 2$ for instance. As explained in section \ref{sec_jacsurf}, there is an abelian surface $A/\FF_{2}$ with
$$
f_{A}(t) = t^{4} - t^{3} - 2 t + 4, \quad \text{and} \quad \card{A(\FF_{2}} = 2.
$$
Hence, $A$ is of type $[x_{+},x_{-}]$, with $x_{\pm} = (- 1 \pm \sqrt{17})/2$, and $\eta = 4/5$, but $q + 1 - m = 1$.
\end{remarks*}

\subsection{Convexity}
\label{subsec_convexity}

Another way to obtain lower bounds for $\card{A(\FF_q)} $ is to use convexity methods as performed by M. Perret. We give the statement of \cite[Th. 3]{per}, slightly rectified.

\begin{theorem}
\label{PerretOriginal}
Let $A/\FF_q$ be an abelian variety of dimension $g$ and trace $- \tau$. Then
$$
\card{A(\FF_q)}  \geq (q-1)^g\Bigl(\frac{ q^{1/2}+1}{ q^{1/2}-1}\Bigr)^{\omega - 2\delta}, \quad
\text{where} \ \omega = \frac{\tau}{2 q^{1/2}}, 
$$
and where $\delta = 0$ if $g + \omega$ is an even integer, and $\delta = 1$ otherwise.
\end{theorem}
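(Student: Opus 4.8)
The plan is to carry out Perret's convexity method in its most direct form. Writing $s = q^{1/2}$ and using \eqref{card}, we have $\card{A(\FF_q)} = \prod_{i=1}^{g}(q+1+x_i)$ with each $x_i \in [-2s, 2s]$. I would exploit the \emph{concavity} of the map $x \mapsto \log(q+1+x)$ on this interval: since the endpoints give $q+1+x = (s-1)^2$ at $x = -2s$ and $q+1+x = (s+1)^2$ at $x = 2s$, the secant line $L$ joining $(-2s, 2\log(s-1))$ and $(2s, 2\log(s+1))$ lies below the graph on the whole interval, so
$$\log(q+1+x) \geq L(x) \qquad (-2s \leq x \leq 2s).$$

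First I would sum this inequality over $i$. The affine function $L$ has slope $\tfrac{1}{2s}\log\frac{s+1}{s-1}$ and constant term $\log(s^2-1)$, so using $\sum_i x_i = \tau$ together with $\omega = \tau/(2s)$ the linear terms collapse and I expect to obtain
$$\sum_{i=1}^{g} \log(q+1+x_i) \geq g\log(s^2-1) + \omega\,\log\frac{s+1}{s-1},$$
that is, $\card{A(\FF_q)} \geq (q-1)^{g}\left(\frac{q^{1/2}+1}{q^{1/2}-1}\right)^{\omega}$. This is exactly the asserted bound in the case $\delta = 0$. I would then check that equality in the secant step forces every $x_i$ to sit at an endpoint $\pm 2s$, which is consistent with $g+\omega$ being an even integer, the numbers of eigenvalues equal to $+s$ and to $-s$ being then the integers $(g\pm\omega)/2$.

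It remains to pass to the stated form with the correction $-2\delta$. Since $q \geq 2$ forces $s > 1$ and hence $\frac{s+1}{s-1} > 1$, lowering the exponent only decreases the right-hand side; as $\omega - 2\delta \leq \omega$, the inequality above immediately gives $\card{A(\FF_q)} \geq (q-1)^{g}\left(\frac{s+1}{s-1}\right)^{\omega - 2\delta}$ for both $\delta \in \{0,1\}$, which is Perret's rectified statement. The delicate point is entirely the bookkeeping around $\delta$: one must verify that $\delta = 0$ precisely describes the parity condition ``$g+\omega$ is an even integer'' under which the secant bound is sharp, and confirm that this matches Perret's constants. I would also remark that the exact minimum of $\prod(q+1+x_i)$ subject to $x_i \in [-2s,2s]$ and $\sum_i x_i = \tau$ — found by minimizing the concave sum over the corresponding polytope, where the optimum is attained at a vertex having at most one interior coordinate — is in fact slightly larger than the stated bound when $\delta = 1$, so the clean exponent $\omega$ is already available and the term $-2\delta$ is a harmless weakening retained to agree with Perret. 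The main obstacle is therefore not the analytic estimate but matching the statement exactly: pinning down $\delta$ through the parity of $g+\omega$ and, should one wish to make $\delta$ genuinely meaningful, checking the elementary one-variable vertex inequality (of the shape $s^2+1+2su \geq (s-1)^{3-u}(s+1)^{u-1}$ for $u \in [-1,1]$) that certifies the $\delta = 1$ case.
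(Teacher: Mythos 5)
Your proof is correct, but it takes a genuinely different route from the paper's. The paper carries out the full optimization: it notes that the strictly concave function $F(y_1,\dots ,y_g)=\sum_i \log (c+y_i)$ attains its minimum on the polytope $P$ at a vertex, shows a vertex has at most one coordinate $\beta$ different from $\pm 1$, sorts out the parity bookkeeping relating $\delta$ to whether $g+\omega$ is an even integer, and only then weakens the exact vertex value via $c+\beta\geq c-1$ and $\omega-\delta\beta\geq\omega-\delta$ to arrive at the exponent $\omega-2\delta$. You replace all of this by the single secant inequality $\log(q+1+x)\geq L(x)$ on $[-2q^{1/2},2q^{1/2}]$, valid by concavity of the logarithm; summing it and using $\sum_i x_i=\tau$ (so the linear terms contribute exactly $\omega\log\frac{q^{1/2}+1}{q^{1/2}-1}$) gives the clean bound
$$
\card{A(\FF_q)}\;\geq\;(q-1)^{g}\Bigl(\frac{q^{1/2}+1}{q^{1/2}-1}\Bigr)^{\omega},
$$
which is \emph{stronger} than the stated one, since the base exceeds $1$ and $\omega-2\delta\leq\omega$; the theorem then follows by simply discarding the $-2\delta$. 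Both your analytic step and the final weakening are sound, so this is a complete proof (your closing remarks about equality cases and the one-variable vertex inequality are not needed and could be dropped). The trade-off between the two approaches is this: yours is shorter, avoids the vertex and parity analysis entirely, and reveals that the correction $-2\delta$ in the statement is superfluous; the paper's exact minimization over $P$, on the other hand, is precisely what enables the sharper refinement of Proposition \ref{perretAm}, where the true minimum (the vertex value, including the non-extremal coordinate) is extracted --- information that the secant bound throws away.
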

 
\begin{proof}
The idea is to find the minimum of the function
$$(x_1,\dots ,x_g)\mapsto \prod_{i=1}^{g}(q+1+x_i)$$
on the set
$$\set{ (x_1,\dots ,x_g)\in[-2 q^{1/2}, 2 q^{1/2} ]^g}{x_1 + \dots + x_g =\tau}.$$
Let
$$
y_i=\frac{x_i}{2 q^{1/2}}, \quad c = \frac{q+1}{2 q^{1/2}}.
$$
The problem  reduces to minimizing the function  
$$F(y_1,\dots ,y_g) = \sum_{i=1}^{g}\log (c+y_i)$$
on the polytope
$$P = \set{(y_1,\dots ,y_g)\in[-1, 1 ]^g}{y_1 + \dots + y_g = \omega}.$$
The set of points in $P$ where $F$ is minimal is invariant under permutations. Since $F$ is strictly concave, the points in this set are vertices of $P$. But at most one of the coordinates of a vertex of $P$ is different from $\pm 1$. Hence the minimum of $F$ is attained at a vertex
$$
\gamma = (1, \dots, 1, -1, \dots, -1,\beta), \quad \text{with} \ \beta \in [-1, 1].
$$
Denote by  $u$ and $v$ the respective numbers of occurrences of $1$ and of $-1$ in $\gamma$, and set $\delta =1$ if $\beta$ is in the open interval $(-1, 1)$ and $0$ otherwise. Then
$$
u+v+\delta = g, \quad
u-v+\delta\beta = \omega,
$$
and adding these equations, it follows that if $\delta = 0$ then $g + \omega$ is an even integer. The converse is true: if $\delta =1$ then $\beta$ is in the open interval $(-1, 1)$, and either $\beta\neq 0$ and $g+\omega$ is not an integer, or $\beta =0$ and $g+\omega =2u+1$. Hence,
\begin{eqnarray*}
\min_{(y_1,\dots ,y_g)\in P} \exp F(y_1,\dots ,y_g)& = & (c+\beta)^{\delta}(c+1)^u(c-1)^v\\
& = &  (c+\beta)^{\delta}(c^2-1)^{\frac{u+v}{2}}{\Bigr(\frac{c+1}{c-1}\Bigl)}^{\frac{u-v}{2}}\\
& = &  (c+\beta)^{\delta}(c^2-1)^{\frac{g-\delta}{2}}{\Bigr(\frac{c+1}{c-1}\Bigl)}^{\frac{\omega-\delta\beta}{2}},
\end{eqnarray*}
and $c+\beta\geq c-1$ and $\omega-\delta\beta\geq \omega-\delta$.
\end{proof}

It is possible to improve Theorem \ref{PerretOriginal} by computing more explicitly the coordinates of the extremal points of $P$ in the above proof.

\begin{proposition}
\label{perretAm}
Let
$$
r = \left[\frac{g+\left[\omega\right]}{2}\right],   \quad
s = \left[\frac{g-1-\left[\omega\right]}{2}\right], \quad \text{where} \ \omega = \frac{\tau}{2 q^{1/2}}.
$$
Then
$$
\card{A(\FF_q)}  \geq (q+1+\tau -2(r-s) q^{1/2})(q+1+2 q^{1/2})^r(q+1-2 q^{1/2})^s.
$$
\end{proposition}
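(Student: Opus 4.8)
The plan is to revisit the proof of Theorem \ref{PerretOriginal} and, rather than merely bounding the coordinates of the extremal vertex, to compute them exactly. Recall from that proof that, with $c = (q+1)/(2q^{1/2})$ and $y_i = x_i/(2q^{1/2})$, one has
$$\card{A(\FF_q)} = (2q^{1/2})^g \exp F(y_1,\dots,y_g),$$
and that the point $(y_1,\dots,y_g)$ lies in the polytope $P$, by Weil's inequality together with $\sum x_i = \tau$. Hence $\card{A(\FF_q)} \geq (2q^{1/2})^g \min_P \exp F$, and it suffices to evaluate this minimum explicitly. The proof of Theorem \ref{PerretOriginal} shows the minimum is attained at a vertex
$$\gamma = (1,\dots,1,-1,\dots,-1,\beta), \quad u+v+\delta = g, \quad u - v + \delta\beta = \omega,$$
with $u$ ones and $v$ minus-ones. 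Since both $F$ and $P$ are invariant under permutation of coordinates, every vertex of $P$ is a permutation of a single such multiset; thus the minimum equals $\exp F(\gamma)$ for the explicit $\gamma$ I now pin down.

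First I would identify $u$ and $v$. The vertex condition forces $u-v$ to be the unique value of the correct parity for which the free coordinate $\beta = \omega - (u-v)$ lands in $[-1,1]$, and I claim this selects $u = r$, $v = s$. To check this, note that $(g + [\omega]) + (g - 1 - [\omega]) = 2g-1$ is odd, so $r + s = g - 1$, consistent with $\delta = 1$. A short analysis on the parity of $g + [\omega]$ then gives $r - s = [\omega] + 1$ when $g + [\omega]$ is even and $r - s = [\omega]$ when it is odd, whence (writing $\{\omega\} = \omega - [\omega]$)
$$\beta = \omega - (r - s) = \{\omega\} - 1 \in [-1,0) \quad \text{or} \quad \beta = \{\omega\} \in [0,1),$$
respectively; in either case $\beta \in [-1,1]$, so $\gamma$ is a genuine vertex. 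When $\beta = \pm 1$ the vertex degenerates to the all-extreme case $\delta = 0$ of Theorem \ref{PerretOriginal}, and the formula below absorbs this uniformly.

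Finally I would translate back. Since a coordinate equal to $1$ contributes $2q^{1/2}(c+1) = q+1+2q^{1/2}$, one equal to $-1$ contributes $q+1-2q^{1/2}$, and the free coordinate contributes $2q^{1/2}(c+\beta) = q+1+2q^{1/2}\beta$, we obtain
$$(2q^{1/2})^g \exp F(\gamma) = (q+1+2q^{1/2})^r (q+1-2q^{1/2})^s (q+1+2q^{1/2}\beta).$$
The stated bound follows upon substituting $2q^{1/2}\beta = 2q^{1/2}\omega - 2(r-s)q^{1/2} = \tau - 2(r-s)q^{1/2}$. The main obstacle is the middle step: correctly pinning down $u = r$ and $v = s$ and verifying $\beta \in [-1,1]$ demands careful floor-and-parity bookkeeping, and one must confirm that this essentially unique vertex multiset — covering at once the interior case $\delta = 1$ and the boundary case $\beta = \pm 1$ — is the global minimizer, which is precisely what the permutation invariance of $F$ and $P$ guarantees.
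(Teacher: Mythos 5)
Your proposal is correct and follows essentially the same route as the paper's proof: both revisit the proof of Theorem \ref{PerretOriginal}, pin down the minimizing vertex exactly through the parity of $g+[\omega]$ (getting $r+s=g-1$, $r-s=[\omega]+\epsilon$, $\beta=\{\omega\}-\epsilon$ with $\epsilon\in\{0,1\}$), and translate back to obtain the stated bound. The only detail the paper treats that you pass over is the degenerate case $\gamma=(1,\dots,1)$ (i.e.\ $\omega=g$, possible only for $q$ a square), where your ``genuine vertex'' claim fails since the formulas give $s=-1$; the paper keeps the identities valid by exactly this convention $r=g$, $s=-1$, and the final formula absorbs the case, so nothing essential is missing.
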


\begin{proof}
We keep the notation and results from the proof of Theorem \ref{PerretOriginal}. If $\gamma\neq (1,\dots ,1)$, we denote by $r$ and $s$ the number of occurrences of $1$ and of $-1$ respectively in  $\gamma$ but now, without eventually counting  $\beta$. Then $r - s = \omega - \beta$, thus $\beta$ must be equal to $\{\omega\}=\omega-[\omega]$ or $\{\omega\}-1$ (after perhaps a permutation of $\beta$ with one of the coordinate equal to   $-1$ in the case where $\beta= 1$). Thus,
$$
r+s = g-1, \quad
r-s = [\omega]+\epsilon, \quad
\beta =\{\omega\}-\epsilon,
$$
where $\epsilon\in\{ 0,1\}$. If $\gamma = (1,\dots ,1)$, the previous identities remain true if $r = g$ and $s = -1$. The equations  $2r=g-1+[\omega]+\epsilon$ and $2s=g-1-[\omega]-\epsilon$ show that  $\epsilon =1$ if and only if  $g+[\omega]$ is even, and  that
$$r  =  \Bigr[\frac{g+[\omega]}{2}\Bigl]\quad\mbox{ and }\quad s  =  \Bigr[\frac{g-1-[\omega]}{2}\Bigl].$$
Proceeding as in the proof of Theorem \ref{PerretOriginal}, we obtain 
$$\min_{(y_1,\dots ,y_g)\in P} \exp F(y_1,\dots ,y_g) =   (c+\{\omega\}-\epsilon)(c^2-1)^{\frac{g-1}{2}}{\Bigr(\frac{c+1}{c-1}\Bigl)}^{\frac{[\omega]+\epsilon}{2}}.$$
Then
$$
\card{A(\FF_q)}  \geq (q-1)^{g-1} 
\Bigr(q + 1 + 2 q^{1/2} (\{\omega\}-\epsilon) \Bigl)
\Bigr(\frac{ q^{1/2}+1}{q^{1/2}-1}\Bigl)^{\frac{\left[\omega\right]+\epsilon}{2}}
$$
where $\epsilon =1$ if $g+ [\omega]$ is even and $0$ otherwise, from which the result follows.
\end{proof}

\begin{remark*}
If $q$ is not a square, the bound of  Proposition \ref{perretAm} is reached only if 
$r=s$ (which implies that  $\vert \tau\vert < 2 q^{1/2}$) and $\tau$ is the trace of some elliptic curve.   If $q$ is a square, this bound is not reached  only if $\tau -2(r-s) q^{1/2}$ is not the trace of an elliptic curve  (in particular, it is reached if  $\tau$ is coprime to $p$).
\end{remark*}

\section{Jacobians}
\label{sec_jac}

\subsection{Jacobians as abelian varieties}
\label{subsec_JacAV}

Let $C$ denotes a nonsingular, projective, absolutely irreducible curve defined over $\FF_{q}$, with Jacobian $J_{C}$. We define
$$J_q(g)=\max_{C} \card{J_C(\FF_q)} \quad \mbox{ and }\quad j_q(g)=\min_{C} \card{J_C(\FF_q)},$$
where $C$ ranges over  the set of curves of genus $g$. Theorem \ref{SerreWeil} implies
$$(q + 1 - m)^{g} \leq j_{q}(g) \leq \card{J_C(\FF_{q})} \leq J_{q}(g) \leq (q+1+m)^g.$$
Let $N = \card{C(\FF_q)}$ be  the number of $\FF_q$-rational points on $C$. If $J_C$ has trace $- \tau$, then
$N = q + 1 + \tau$, and \eqref{TraceBounds} implies
$$
\Bigl( 1 - \frac{2}{q} \Bigr) \Bigl(q + 1 + \frac{N - (q + 1)}{g} \Bigr)^{g} \leq
\card{J_C(\FF_q)} \leq \Bigl(q + 1 + \frac{N - (q + 1)}{g} \Bigr)^{g}.$$
Hence,
$$
\Bigl( 1 - \frac{2}{q} \Bigr) \Bigl( q + 1 + \frac{N_{q}(g) - (q + 1)}{g} \Bigr)^{g} \leq
J_{q}(g) \leq \Bigl( q + 1 + \frac{N_{q}(g) - (q + 1)}{g} \Bigr)^{g},
$$
where $N_q(g)$ stands for the maximum number of rational points on a curve defined over $\FF_q$ of genus $g$. The quantity  $J_q(g)$ has the following asymptotic behaviour. Define, as usual,
$$
A(q) = \limsup_{g\rightarrow\infty} \frac{N_{q}(g)}{g} \, .
$$
The preceding inequalities imply
$$
(1 - \frac{2}{q}) (q + 1 + A(q)) \leq \limsup J_{q}(g)^{1/g} \leq q + 1 + A(q).
$$
On the one hand, the  Drinfeld-Vl\u{a}du\c{t} upper bound \cite[p. 146]{tvs}
$$A(q) \leq q^{1/2} - 1$$
implies
$$\limsup_{g\rightarrow\infty}( J_q(g))^{1/g}\leq q + q^{1/2}$$
(the Weil bound would only give the upper bound $q+1+ 2 q^{1/2}$).  On the other hand, S. Vl\u{a}du\c{t} has proved \cite{vla} that if $q$ is a square, then  
$$q(\frac{q}{q-1})^{q^{1/2}-1} \leq \limsup_{g\rightarrow\infty}( J_q(g))^{1/g}.$$
Observe that, when  $q \rightarrow \infty$,
$$q(\frac{q}{q-1})^{q^{1/2}-1}=q+q^{1/2}-\frac{1}{2} + O(\frac{1}{q^{1/2}}).$$ 

\begin{remark*}
Some observations are worthwile to point out on the relationship between $N_q(g)$ and $J_q(g)$. The number of points on the Jacobian of a maximal curve, that is, with $N_q(g)$ points, does not necessarily reach $J_q(g)$. For instance, J.-P. Serre  \cite[p. Se47]{serre2} has shown that there exist two curves of genus $2$ over $\FF_3$ with $N_3(2)=8$ points whose Jacobians have $35$ points and $36$ points respectively.

We shall see in Section \ref{sec_jacsurf} that a maximal Jacobian surface, that is, with $J_q(2)$ points, is always the Jacobian of a maximal curve (but there is no reason that this could remain true if $g>2$).

A curve reaching the Serre-Weil bound (i.e. with $q+1+m$ points) has type $[m,\dots ,m]$ by \eqref{trace}, hence, in the case where the Serre-Weil bound is reached for curves of genus $g$, a curve of genus $g$ is maximal if and only if its Jacobian is maximal.
\end{remark*}

We record the consequences for Jacobians of the results of section \ref{sec_abvar}. Theorem \ref{BorneSpecht}\eqref{SPB1} implies, with $M(q)$ as defiend there:

\begin{proposition}
\label{BorneSpechtJac}
If $C$ is a curve of genus $g$ over $\FF_q$ as above, then
\begin{equation}
\label{bsj}
\tag{\textbf{I}}
\card{J_C(\FF_q)} \geq
M(q)^{g} \Bigl(q + 1 + \frac{N - (q + 1)}{g} \Bigr)^{g}.
\end{equation}
\end{proposition}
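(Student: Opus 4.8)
The plan is to obtain this bound as an immediate specialization of Theorem~\ref{BorneSpecht}\eqref{SPB1} to the case $A = J_{C}$. Since $J_{C}$ is an abelian variety of dimension $g$ over $\FF_{q}$, that theorem applies verbatim and yields
$$
\card{J_{C}(\FF_q)} \geq M(q)^{g}\Bigl(q + 1 + \frac{\tau}{g}\Bigr)^{g},
$$
where $-\tau$ is the trace of $J_{C}$. The only thing that needs to be supplied is the dictionary between the abelian-variety invariant $\tau$ and the curve-theoretic quantity $N = \card{C(\FF_q)}$.

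First I would recall the relation $N = q + 1 + \tau$ recorded just before the statement. It comes from the fact that the roots $\omega_{i}, \overline{\omega}_{i}$ of the Weil polynomial of $J_{C}$ are precisely the Frobenius eigenvalues governing the zeta function of $C$, so that $\card{C(\FF_q)} = q + 1 - \sum_{i}(\omega_{i} + \overline{\omega}_{i}) = q + 1 + \sum_{i} x_{i} = q + 1 + \tau$. Equivalently, $\tau = N - (q + 1)$. Substituting this into the displayed inequality converts the term $q + 1 + \tau/g$ into $q + 1 + (N - (q+1))/g$, which is exactly \eqref{bsj}.

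There is no genuine obstacle in this passage: the entire analytic content---Specht's inequality and the estimate $M(q) \geq 1 - 2/q$---has already been expended in the proof of Theorem~\ref{BorneSpecht}, and the present proposition merely re-reads that result through the substitution $\tau = N - (q+1)$. The only point worth checking is that the hypotheses of Theorem~\ref{BorneSpecht} transfer without modification, which they do since $J_{C}$ is an honest abelian variety over $\FF_{q}$ with $q \geq 2$; in particular the exceptional convention for $q = 2$ (replacing $M(q)$ by the numerical constant) carries over unchanged.
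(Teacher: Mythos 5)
Your proof is correct and is exactly the paper's argument: Proposition~\ref{BorneSpechtJac} is stated there as an immediate consequence of Theorem~\ref{BorneSpecht}\eqref{SPB1} applied to $A = J_{C}$, via the identity $N = q + 1 + \tau$ recorded at the start of $\S$~\ref{subsec_JacAV}. (Your final remark about the $q=2$ convention is harmless but unnecessary, since that convention pertains to part~\eqref{SPB2}, not to the inequality \eqref{SPB1} actually used here.)
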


Proposition \ref{SerreWeilTrace} implies
$$\card{J_C(\FF_q)} \geq (q + 1 - m)^g + (g m + N - q - 1)(q - m)^{g - 1},$$
and Proposition \ref{perretAm} leads to:

\begin{proposition}
\label{firstbound}
If $C$ is a curve of genus $g$ over $\FF_q$ as above, then
\begin{equation}
\label{Borne-per}
\tag{\textbf{II}}
\card{J_C(\FF_q)} \geq (N - 2(r - s) q^{1/2})(q + 1 + 2  q^{1/2})^r(q + 1 - 2 q^{1/2})^{s},
\end{equation}
with
$$
r = \left[\frac{g+\left[\omega\right]}{2}\right],   \quad
s = \left[\frac{g-1-\left[\omega\right]}{2}\right], \quad \text{where} \ \omega = \frac{N - q - 1}{2 q^{1/2}}.
\rlap \qed
$$
\end{proposition}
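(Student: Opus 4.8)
The plan is to obtain Proposition \ref{firstbound} as a direct specialization of Proposition \ref{perretAm} to the abelian variety $A = J_C$. Since the Jacobian of a curve $C$ of genus $g$ is an abelian variety of dimension $g$, Proposition \ref{perretAm} applies verbatim, and the only task is to rewrite the trace parameter in terms of the curve datum $N = \card{C(\FF_q)}$.

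First I would invoke the relation between the number of rational points on $C$ and the trace of Frobenius on $J_C$, recorded in $\S$\ref{subsec_JacAV}: if $J_C$ has trace $-\tau$, then $N = q + 1 + \tau$, i.e. $\tau = N - q - 1$. This is the Lefschetz point count $N = q + 1 - \sum_{i}(\omega_i + \overline{\omega}_i) = q + 1 + \sum_i x_i = q + 1 + \tau$, using the definitions $x_i = -(\omega_i + \overline{\omega}_i)$ and $\tau = \sum_i x_i$ from $\S$\ref{subsec_upper}, together with the fact that $f_{J_C}$ is the characteristic polynomial of Frobenius.

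With this substitution the remainder is pure bookkeeping. Setting $\tau = N - q - 1$ turns the parameter $\omega = \tau/(2 q^{1/2})$ of Proposition \ref{perretAm} into $\omega = (N - q - 1)/(2 q^{1/2})$, so the integers $r = [(g + [\omega])/2]$ and $s = [(g - 1 - [\omega])/2]$ acquire exactly the forms asserted. The leading factor transforms as $q + 1 + \tau - 2(r - s) q^{1/2} = N - 2(r - s) q^{1/2}$, while the two power factors $(q + 1 + 2 q^{1/2})^{r} (q + 1 - 2 q^{1/2})^{s}$ are untouched. Assembling these yields the claimed inequality \eqref{Borne-per} at once.

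I do not expect a genuine obstacle here: all the analytic content is already carried by Proposition \ref{perretAm}, and this statement is merely its translation into curve-theoretic language. The single point deserving care is the sign and normalization conventions — one must check that the trace $-\tau$ used for abelian varieties is compatible with the point-counting formula $N = q + 1 + \tau$, and that $\omega$ is formed from $\tau$ (not $-\tau$), so that $r$ and $s$ come out as written. These are pinned down consistently by the definitions in $\S$\ref{subsec_upper} and the identity in $\S$\ref{subsec_JacAV}.
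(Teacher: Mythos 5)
Your proposal is correct and coincides with the paper's own proof: the paper derives Proposition \ref{firstbound} precisely by applying Proposition \ref{perretAm} to $A = J_C$ and substituting $\tau = N - q - 1$, which is why the statement carries only a \qed. Your attention to the sign convention ($N = q + 1 + \tau$ with trace $-\tau$) is exactly the one point that needed checking, and it is handled correctly.
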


\subsection{Virtual zeta functions}
\label{subsec_virtual}

The usual numerical sequences associated to a curve over $\FF_{q}$ (number of points, number of effective and prime divisors) occur in expressions involving the numerator of its zeta function. These sequences are defined and their main properties are obtained here by taking as starting point a polynomial satisfying suitable conditions, without any geometric reference. If $A$ is an abelian variety of dimension $g$ over $\FF_{q}$, with characteristic polynomial $f_{A}(t)$, the reciprocal Weil polynomial $P(t) = P_{A}(t) = t^{2 g} f_{A}(t^{- 1})$ fulfills the following conditions :
\begin{enumerate}
\item
\label{eqfct1}
$P(t)$ satisfies the functional equation
\begin{equation*}
\label{eqfct11}
P(t) = q^{g} t^{2 g} P(\frac{1}{q t}),
\end{equation*}
\item
\label{eqfct2}
$P(0) = 1$ and $P(1) \neq 0$.
\end{enumerate}
The inverse roots of $P$ have modulus $q^{1/2}$, but we do not need this condition now. Condition \eqref{eqfct1} means that if
$$
P(t) = \sum_{n = 0}^{2 g} a_{n} t^{n}
$$
then $a_{2 g - n} = q^{g - n}a_{n}$ for $0 \leq n \leq 2 g$, in other words the sequence $q^{- n/2} a_{n}$ is \emph{palindromic}. This condition implies that the possible multiplicity of $\pm q^{1/2}$ is even, as shown by H. Stichtenoth in the proof of \cite[Th. 5.1.15(e)]{Stichtenoth}.
Now, let $q \in \NN$ with $q \geq 2$, and denote by $\mathbf{P}_{g}(q)$ the set of polynomials of degree $2 g$ in $\ZZ[t]$ satisfying \eqref{eqfct1} and \eqref{eqfct2}. If $P \in \mathbf{P}_{g}(q)$, the \emph{virtual zeta function} with numerator $P$ is the power series
\begin{equation}
\label{zeta}
Z(t) = Z_{P}(t) = \frac{P(t)}{(1 - t)(1 - q t)} \in \ZZ[[t]] \, ,
\end{equation}
which is convergent if $\card{t} < q^{- 1}$. For $n \geq 0$, define $A_{n} = A_{n}(P) \in \ZZ$ and $N_{n} = N_{n}(P) \in \QQ$ as the coefficients of the power series
\begin{equation}
\label{expressionzeta}
Z(t) = \sum_{n = 0}^{\infty} A_{n} t^{n}, \quad
\log Z(t) = \sum_{n = 1}^{\infty} N_{n} \, \frac{t^{n}}{n},
\end{equation}
and put
$$B_{n} = B_{n}(P) = \frac{1}{n} \sum_{d | n} \mu(\frac{n}{d}) N_{d} \in \QQ,$$
where $\mu$ is the M\"{o}bius function. The M\"{o}bius inversion formula implies
\begin{equation}
\label{Moebius}
N_{n} = \sum_{d | n} d B_{d}.
\end{equation}

We recall now a result of Schur \cite{Schur}:

\begin{lemma}
\label{ZetaSch}
Let $P \in \mathbf{P}_{g}(q)$. With the previous notation:
\begin{enumerate}
\item
\label{ZSC1}
We have
$$
Z(t) = \prod_{n = 1}^{\infty}(1 - t^{n})^{-B_{n}}.
$$
\item
\label{ZSC2}
The numbers $B_{n}$ and $N_{n}$ are in $\ZZ$ for any $n \geq 1$.
\item
\label{ZSC3}
If $n \geq 1$, then
\begin{equation}
\label{AnFormula}
A_{n} = \prod_{b \in \EuScript{P}_{n}} \prod_{i = 1}^{n} \binom{B_{i} + b_{i} - 1}{b_{i}},
\end{equation}
with
$$\EuScript{P}_{n} =\{b=(b_1,\ldots,b_n)\in \NN^n \mid b_1+2b_2+\cdots+nb_n=n\}.$$
\end{enumerate}
\end{lemma}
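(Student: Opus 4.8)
The plan is to work entirely with formal power series in $1 + t\,\QQ[[t]]$, exploiting the dictionary between the additive coefficients $A_{n}$ of $Z$, the logarithmic coefficients $N_{n}$ of $\log Z$, and the multiplicative exponents $B_{n}$ of the Euler product. First I would establish (i) by taking the formal logarithm of the candidate product:
$$
\log \prod_{n \geq 1}(1 - t^{n})^{-B_{n}}
= \sum_{n \geq 1} B_{n} \sum_{k \geq 1} \frac{t^{nk}}{k}.
$$
Setting $m = nk$ and collecting the coefficient of $t^{m}$, each divisor $n \mid m$ contributes $n B_{n}/m$, so that coefficient equals $\tfrac{1}{m}\sum_{n \mid m} n B_{n} = N_{m}/m$ by the Möbius relation \eqref{Moebius}. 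Thus the logarithm of the product coincides term by term with $\log Z(t) = \sum_{m \geq 1} N_{m}\,t^{m}/m$, and since both sides lie in $1 + t\,\QQ[[t]]$ this proves (i). This step is pure bookkeeping.

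The crux is (ii). Here the key observation is that $Z$ itself already has integer coefficients: from $\frac{1}{(1-t)(1-qt)} = \sum_{n \geq 0}\frac{q^{n+1}-1}{q-1}\,t^{n} \in \ZZ[[t]]$ (geometric sums) and $P \in \ZZ[t]$ with $P(0) = 1$, we get $Z \in 1 + t\,\ZZ[[t]]$, so in particular $A_{n} \in \ZZ$. It then suffices to prove the general fact that every $f \in 1 + t\,\ZZ[[t]]$ factors as $f = \prod_{n \geq 1}(1 - t^{n})^{-c_{n}}$ with all $c_{n} \in \ZZ$. I would construct the $c_{n}$ by induction on $n$: if the partial product $g_{N-1} = \prod_{n < N}(1-t^{n})^{-c_{n}}$ matches $f$ modulo $t^{N}$, then $f/g_{N-1} \in 1 + t\,\ZZ[[t]]$ is of the form $1 + c\,t^{N} + O(t^{N+1})$ with $c \in \ZZ$, and choosing $c_{N} = c$ cancels the $t^{N}$ term, since $(1-t^{N})^{-c_{N}} = 1 + c_{N}t^{N} + O(t^{N+1})$. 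Comparing logarithms shows this factorization is unique over $\QQ$, so the integers $c_{n}$ must equal the $B_{n}$ of (i); hence $B_{n} \in \ZZ$, and then $N_{n} = \sum_{d \mid n} d B_{d} \in \ZZ$ as well.

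Finally, (iii) follows by expanding the Euler product of (i) with the generalized binomial series $(1 - t^{n})^{-B_{n}} = \sum_{b_{n} \geq 0}\binom{B_{n} + b_{n} - 1}{b_{n}}\,t^{n b_{n}}$, which is legitimate now that each $B_{n}$ is an integer. Multiplying over $n$ and extracting the coefficient of $t^{n}$, only indices $i \leq n$ can contribute and a monomial survives exactly when $\sum_{i} i\,b_{i} = n$, i.e.\ $(b_{1}, \dots, b_{n}) \in \EuScript{P}_{n}$; this gives
$$
A_{n} = \sum_{b \in \EuScript{P}_{n}} \prod_{i=1}^{n}\binom{B_{i} + b_{i} - 1}{b_{i}},
$$
the coefficient of $t^n$ being naturally the sum over $\EuScript{P}_{n}$ of the products in \eqref{AnFormula}. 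The genuine obstacle is (ii): parts (i) and (iii) are formal expansions, whereas (ii) rests on the arithmetic input $Z \in 1 + t\,\ZZ[[t]]$ together with the triangular structure (in $n$) of the product factorization, which is what forces the a priori rational exponents $B_{n}$ to be integers.
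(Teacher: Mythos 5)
Your proof is correct and follows essentially the same route as the paper's: formal logarithm plus the M\"obius relation for (i), an induction exploiting $Z \in 1 + t\,\ZZ[[t]]$ and the triangularity of the Euler product for (ii) (the paper runs this induction directly on the $B_{n}$, multiplying $Z(t)$ by $\prod_{i<n}(1 - t^{i})^{B_{i}}$ and reading off $B_{n}$ as an integer coefficient of the resulting series, which makes your separate uniqueness step unnecessary), and the negative binomial expansion for (iii). Two cosmetic remarks: (iii) does not in fact require (ii), since the binomial series is valid for arbitrary exponents via the generalized binomial coefficients $\binom{r}{k}$, $r \in \CC$ (this is also how the product in (i) must be interpreted while the $B_{n}$ are only known to be rational); and your formula with $\sum_{b \in \EuScript{P}_{n}}$ is the intended statement --- the outer $\prod$ over $\EuScript{P}_{n}$ in \eqref{AnFormula} is a typo, as the paper's own use of the formula in the proof of Lemma \ref{minorationAn} confirms.
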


\begin{proof}
First, \eqref{ZSC1} is a consequence of \eqref{Moebius}. By using the negative binomial formula 
$$(1 - t)^{- m} = \sum_{n = 0}^{+ \infty} \binom{m + n - 1}{n}t^{n},$$ 
where $m \in \CC$ and
$$
\binom{r}{0} = 1, \quadÊ\binom{r}{k} = \frac{r(r-1) \dots (r-k+1)}{k!} \quad (r \in \CC, k \in \NN)
$$
are the \emph{generalized binomial coefficients}, we get \eqref{ZSC3} from \eqref{ZSC1} by comparison of the coefficients of these power series. We prove \eqref{ZSC2} by induction. First, \eqref{ZSC1} implies $A_{1} = B_{1}$, and also
$$
Z(t) \prod_{i = 1}^{n - 1}(1 - t^{i})^{B_{i}} =
\prod_{i = n}^{\infty}(1 - t^{i})^{-B_{i}} = 1 + B_{n} t^{n} \ (\mathrm{mod} \ t^{n + 1}).
$$
If $B_{i} \in \ZZ$ for $1 \leq i \leq n - 1$, the left hand side is a power series with coefficients in $\ZZ$, and in particular the coefficient of $t^{n}$. Then $N_{n} \in \ZZ$ by \eqref{Moebius}.
\end{proof}
For $n \in \ZZ$ we define $\pi_{n} = (q^{n + 1} - 1)/(q - 1)$, in such a way that $\pi_{- 1} = 0$ and
$$
\frac{1}{(1 - t) (1 - q t)} = \sum_{n = 0}^{\infty} {\pi_{n} t^{n}}.
$$
The definition of $Z(t)$ implies
\begin{equation}
\label{Newton}
A_{n} = \sum_{k = 0}^{\min(n, 2g)} a_{k} \pi_{n - k}, \quad n \geq 0.
\end{equation}

\begin{lemma}
\label{LMD}
Let $P \in \mathbf{P}_{g}(q)$, and assume $g \geq 2$. If $n \in \ZZ$, then
\begin{equation}
\label{GillesMireille1}
A_{n} = q^{n + 1 - g} A_{2 g - 2 - n} + P(1) \pi_{n - g},
\end{equation}
assuming that $A_{n} = 0$ si $n < 0$. In particular,
\begin{equation}
\label{GillesMireille2}
A_{n} = P(1) \, \pi_{n - g}, \quad n \geq 2g - 1.
\end{equation}
\end{lemma}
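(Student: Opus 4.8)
The plan is to introduce the single auxiliary sum
$$
S_{n} = \sum_{k = 0}^{2 g} a_{k} \pi_{n - k} \qquad (n \in \ZZ),
$$
where $\pi_{j} = (q^{j + 1} - 1)/(q - 1)$ is used for \emph{all} integers $j$, not only for $j \geq 0$, and to evaluate $S_{n}$ in two different ways. The only two facts about $P$ that will be needed are the palindromy $a_{2 g - k} = q^{g - k} a_{k}$ of its coefficients, coming from the functional equation, and the elementary reflection identity $\pi_{j} = - q^{j + 1} \pi_{- j - 2}$, valid for every $j \in \ZZ$, which is immediate from the closed formula for $\pi_{j}$.

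First I would compute $S_{n}$ directly. Writing $\pi_{n - k} = (q^{n + 1 - k} - 1)/(q - 1)$ and summing over $k$ gives
$$
S_{n} = \frac{1}{q - 1} \Bigl( q^{n + 1} \sum_{k = 0}^{2 g} a_{k} q^{- k} - \sum_{k = 0}^{2 g} a_{k} \Bigr) = \frac{q^{n + 1} P(1/q) - P(1)}{q - 1}.
$$
Evaluating the functional equation at $t = 1$ yields $P(1) = q^{g} P(1/q)$, hence $P(1/q) = q^{- g} P(1)$, so that $S_{n} = P(1) \, \pi_{n - g}$ for every $n \in \ZZ$.

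Next I would split $S_{n}$ according to the sign of the index $n - k$. The terms with $n - k \geq 0$ reproduce, via \eqref{Newton}, the coefficient $A_{n}$ (the term $k = n + 1$ contributes nothing, since $\pi_{- 1} = 0$). In the remaining terms, where $n - k \leq - 2$, I would apply the reflection identity $\pi_{n - k} = - q^{n - k + 1} \pi_{k - n - 2}$, which now carries a \emph{non-negative} index, substitute $a_{k} = q^{k - g} a_{2 g - k}$, and reindex by $j = 2 g - k$. The powers of $q$ collapse to the single factor $q^{n + 1 - g}$, and the reindexed sum becomes exactly $\sum_{j} a_{j} \pi_{(2 g - 2 - n) - j} = A_{2 g - 2 - n}$ by \eqref{Newton} again. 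This produces $S_{n} = A_{n} - q^{n + 1 - g} A_{2 g - 2 - n}$, and comparing with the direct evaluation gives \eqref{GillesMireille1}. Then \eqref{GillesMireille2} is simply the special case $n \geq 2 g - 1$, where $2 g - 2 - n < 0$ forces $A_{2 g - 2 - n} = 0$.

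The only delicate point is the bookkeeping in this last split: one must check that the summation bounds match up in every range of $n$, so that the non-negative-index part is genuinely $A_{n}$ and the reindexed negative-index part is genuinely $A_{2 g - 2 - n}$. This means handling the boundary term $k = n + 1$ (killed by $\pi_{- 1} = 0$) and the degenerate ranges $n < 0$ and $n \geq 2 g - 1$, in which one of the two sums is empty; there the identity reduces, through the reflection identity for $\pi$, to the already-established case \eqref{GillesMireille2}. All of this is routine once the two evaluations of $S_{n}$ are in place.
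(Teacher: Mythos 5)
Your proposal is correct and follows essentially the same route as the paper: the paper's proof likewise evaluates the full sum $\sum_{k=0}^{2g} a_{k}\pi_{n-k}$ directly via $P(q^{-1}) = q^{-g}P(1)$ to get $P(1)\pi_{n-g}$, then splits off the head $\sum_{k\le n}$ (which is $A_{n}$), the vanishing term $a_{n+1}\pi_{-1}$, and identifies the tail $\sum_{k\ge n+2}$ with $-q^{n+1-g}A_{2g-2-n}$ using the palindromy of the coefficients. Your packaging of the tail computation through the reflection identity $\pi_{j} = -q^{j+1}\pi_{-j-2}$, and your uniform treatment of all $n \in \ZZ$ instead of the paper's reduction via the symmetry $n \mapsto 2g-2-n$, are only cosmetic reorganizations of the same argument.
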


\begin{proof}
Since \eqref{GillesMireille1} is invariant by $n \mapsto 2 g - n - 2$, and trivial if $n = g - 1$, it is sufficient to prove \eqref{GillesMireille1} if $n \geq g$. This is performed in three steps. (a) If $n \in \ZZ$, then
\begin{multline*}
(q - 1) \sum_{k = 0}^{2g} a_{k} \pi_{n - k} = 
\sum_{k = 0}^{2g} a_{k} (q^{n - k + 1} - 1) =
q^{n + 1} \sum_{k = 0}^{2g} a_{k} q^{-k} - \sum_{k = 0}^{2g} a_{k}
\\ =
q^{n + 1} P(q^{-1}) - P(1) =
(q^{n + 1 - g} - 1)P(1) =  (q - 1) P(1) \pi_{n - g},
\end{multline*}
since $P(q^{-1}) = q^{- g} P(1)$, hence
$$
\sum_{k = 0}^{2g} a_{k} \pi_{n - k} = P(1) \pi_{n - g}.
$$
This proves \eqref{GillesMireille2}, since the left hand side is equal to $A_{n}$ (if $n = 2 g - 1$, notice that the term with subscript $k = 2 g$ is zero), and $A_{2 g - 2 - n} = 0$, since $2 g - 2 - n \leq - 1$.
(b) If $0 \leq n \leq 2 g - 2$, then
\begin{multline*}
q^{n + 1 - g} A_{2 g - n - 2}
=
\frac{q^{n + 1 - g}}{q - 1}  \sum_{k = 0}^{2g - n - 2} a_{k} (q^{2 g - n - 1 - k} - 1) \\
=
\frac{- 1}{q - 1} \sum_{k = 0}^{2g - n - 2} a_{k} q^{g - k} (q^{n - 2 g + k + 1} - 1)
=
- \sum_{k = 0}^{2g - n - 2} a_{2 g - k} \pi_{n - 2 g + k} = - \sum_{k = n + 2}^{2g} a_{k} \pi_{n - k}.
\end{multline*}
(c) If $0 \leq n \leq 2 g - 2$, we deduce from (a) :
$$
A_{n} = \sum_{k = 0}^{n} a_{k} \pi_{n - k} =
\sum_{k = 0}^{2g} a_{k} \pi_{n - k} - a_{n + 1} \pi_{- 1} - \sum_{k = n + 2}^{2g} a_{k} \pi_{n - k}.
$$
The first term is equal to $P(1) \pi_{n - g}$ by (a), the second is zero, and the third is equal to $q^{n + 1 - g} A_{2 g - n - 2}$ by (b).
\end{proof}

As in the case of the zeta function of a curve \cite{L-MD}, we deduce from Lemma \ref{LMD} that if $g \geq 2$, then
\begin{equation}
\label{Hecke}
\sum_{n = 0}^{g - 1} A_{n} t^{n} + \sum_{n = 0}^{g - 2} q^{g - 1 - n} A_{n} t^{2 g - 2 - n} =
Z_{P}(t) - \frac{P(1) t^{g}}{(1 - t)(1 - q t)} \, .
\end{equation}
In particular, if $t = q^{-1/2}$,
\begin{equation}
\label{Center}
A_{g - 1} + 2 q^{(g - 1)/2} \sum_{n = 0}^{g - 2} A_{n} q^{-n/2} =
q^{(g - 1)/2} Z_{P}(q^{-1/2}) + \frac{P(1)}{(q^{1/2} - 1)^{2}} \, .
\end{equation}

If $P \in \mathbf{P}_{g}(q)$, the map $\omega \mapsto q/\omega$ is a permutation of the inverse roots of $P$, and we order them in a sequence $(\omega_{i})_{1 \leq i \leq 2 g}$ such that $\omega_{g + i} = q/\omega_{i}$ for $1 \leq i \leq g$. We put
$$x_{i} = - (\omega_{i} + \frac{q}{\omega_{i}}), \quad 1 \leq i \leq g.$$
As in $\S$ \ref{subsec_lower}, we define $\eta = \eta(P) \in \QQ$ by
$$
\frac{1}{\eta} = \frac{1}{g} \sum_{i = 1}^{g} \frac{1}{q + 1 + x_{i}} \, .
$$
Since $P(1) \neq 0$, we are sure that $q + 1 + x_{i} \neq 0$. As in the case of the zeta function of a curve \cite{L-MD}, the evaluation at $t = 1$ of \eqref{Hecke} leads to

\begin{theorem}
\label{GillesMireille3}
Assume $g \geq 2$. With notation as above, if $P \in \mathbf{P}_{g}(q)$, then 
$$
\frac{g}{\eta} \, P(1) = \sum_{n = 0}^{g - 1} A_{n} + \sum_{n = 0}^{g - 2} q^{g - 1 - n} A_{n}. \rlap \qed
$$
\end{theorem}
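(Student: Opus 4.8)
Theorem \ref{GillesMireille3} asserts an identity relating $\frac{g}{\eta}P(1)$ to a sum of the coefficients $A_n$. Let me sketch how I would prove it.

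The plan is to evaluate the rational-function identity \eqref{Hecke} at $t = 1$. The left-hand side of \eqref{Hecke} is a polynomial in $t$, so setting $t = 1$ immediately produces $\sum_{n = 0}^{g-1} A_{n} + \sum_{n = 0}^{g-2} q^{g-1-n} A_{n}$, which is precisely the right-hand side of the asserted identity. It therefore suffices to show that the right-hand side of \eqref{Hecke}, evaluated at $t = 1$, equals $\frac{g}{\eta} P(1)$.

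First I would place the two terms on the right of \eqref{Hecke} over a common denominator, rewriting the expression as $\frac{P(t) - P(1) t^{g}}{(1 - t)(1 - qt)}$. At $t = 1$ both the numerator and the denominator vanish, so this is a $0/0$ indeterminate form; since the left-hand side of \eqref{Hecke} is a genuine polynomial, the apparent pole at $t = 1$ must cancel, and the value at $t = 1$ is the limit of this quotient. Applying l'H\^opital's rule once, and using that the derivative of $(1 - t)(1 - qt)$ at $t = 1$ equals $q - 1$, I obtain the value $\frac{P'(1) - g\,P(1)}{q - 1}$.

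The remaining step is to express $P'(1)$ in terms of $\eta$. Here I would use the factorization $P(t) = \prod_{i = 1}^{g}(q t^{2} + x_{i} t + 1)$, which follows from the chosen ordering $\omega_{g + i} = q/\omega_{i}$ of the inverse roots together with the definition $x_{i} = -(\omega_{i} + q/\omega_{i})$. Taking the logarithmic derivative gives $\frac{P'(t)}{P(t)} = \sum_{i = 1}^{g} \frac{2 q t + x_{i}}{q t^{2} + x_{i} t + 1}$, and at $t = 1$ each summand is $\frac{2q + x_{i}}{q + 1 + x_{i}} = 1 + \frac{q - 1}{q + 1 + x_{i}}$. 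Summing over $i$ and invoking the definition $\frac{1}{\eta} = \frac{1}{g} \sum_{i} \frac{1}{q + 1 + x_{i}}$ (which is legitimate since $P(1) \neq 0$ guarantees every $q + 1 + x_{i} \neq 0$) yields $\frac{P'(1)}{P(1)} = g + (q - 1)\frac{g}{\eta}$. Substituting this into $\frac{P'(1) - g\,P(1)}{q - 1}$ makes the $g\,P(1)$ contributions cancel and leaves exactly $\frac{g}{\eta} P(1)$.

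I expect the only genuinely delicate point to be the recognition that the right-hand side of \eqref{Hecke} cannot be evaluated naively at $t = 1$, since both displayed terms individually blow up there, and that the correct value is the finite limit obtained once the poles cancel. Once this is handled via l'H\^opital's rule, the remainder is the routine logarithmic-derivative computation linking $P'(1)$ to the harmonic mean $\eta$, and the identity falls out directly.
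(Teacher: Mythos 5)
Your proposal is correct and follows exactly the route the paper intends: the paper proves Theorem \ref{GillesMireille3} by ``the evaluation at $t=1$ of \eqref{Hecke}'' (referring to \cite{L-MD} for the details), and your argument simply writes out that evaluation, correctly resolving the removable singularity at $t=1$ via the limit $\frac{P'(1)-gP(1)}{q-1}$ and the logarithmic derivative of $P(t)=\prod_{i=1}^{g}(qt^{2}+x_{i}t+1)$ to identify it with $\frac{g}{\eta}P(1)$. No gaps; this is the paper's proof with the omitted computation supplied.
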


We recall now the \emph{exponential formula}, see R.P. Stanley \cite{sta}, which goes back to Euler. Let $\mathbf{y}=(y_n)_{n\in\NN}$ be a sequence of indeterminates. To an element $b = (b_1, \dots, b_n) \in \NN^n$ one associates the monomial  $\mathbf{y}^b=y_1^{b_1}\dots y_n^{b_n}$ in the ring $\QQ[[\mathbf{y}]]$. Then
\begin{eqnarray*}
\exp \Bigl(\sum_{n=1}^{\infty} y_{n}\frac{t^n}{n}\Bigr) & = &
\prod_{n=1}^{\infty}\exp \Bigl(y_{n}\frac{t^n}{n}\Bigr) =
\prod_{n=1}^{\infty} \sum_{b_n=0}^{\infty}\frac{1}{b_n!}\Bigl(y_n\frac{t^n}{n}\Bigr)^{b_n} \\
& = & \sum_{b_1,\dots b_k\in\NN}\frac{\mathbf{y}^b}{b_1!\dots b_k!}\frac{t^{b_1+2b_2+\dots +kb_k}}{2^{b_2}\dots k^{b_k}}.
\end{eqnarray*}

Let $C_{0}(\mathbf{y}) = 1$ and for $n \in \NN$, $n \geq 1$:
$$C_{n}(\mathbf{y})=\sum_{b \in \EuScript{P}_n} c(b) \mathbf{y}^b, \quad c(b)=\frac{n!}{b_1!\dots b_n!}\frac{1}{2^{b_2}\dots n^{b_n}}$$
where $\EuScript{P}_{n}$ is as above. We put
$$\calC_n(\mathbf{y})=\frac{C_n(\mathbf{y})}{n!}.$$
The previous computations show that the following classical identity, called the \emph{exponential formula}, holds in the ring $\QQ[[\mathbf{y}]][[t]]$:
$$\exp \Bigl(\sum_{n=1}^{+\infty} y_{n}\frac{t^n}{n}\Bigr)=\sum_{n=0}^{+\infty}\calC_n(\mathbf{y})t^n.$$
The coefficients of $C_n(\mathbf{y})$ are natural numbers, since
$$C_n(\mathbf{y})=\sum_{\sigma\in\mathfrak{S}_{n}}\mathbf{y}^{\beta (\sigma )},$$
where the summation is over the symmetric group $\mathfrak{S}_{n}$, where $\beta (\sigma )=(b_1(\sigma ),\dots ,b_n (\sigma))$ and $b_k(\sigma )$ is the number of cycles of length $k$ in the cycle decomposition of  $\sigma$ as a product of disjoint cycles. Now come back to the zeta function $Z_{P}(t)$ of a polynomial $P \in \mathbf{P}_{g}(q)$. From \eqref{expressionzeta}, we deduce
\begin{equation}
\label{An}
A_{n} = \calC_{n}(N_{1}, \dots, N_{n}).
\end{equation}

\subsection{Specific bounds for Jacobians}
\label{subsec_specific}

All the results of $\S$ \ref{subsec_virtual} apply if we take for $P$ the reciprocal Weil polynomial $P_{A}$ of an abelian variety $A$ over $\FF_{q}$, and $\card{A(\FF_q)} = P_{A}(1)$. We put
$$Z_{A}(t) = Z_{P}(t), \quad A_{n}(A) = A_{n}(P), \quadÊB_{n}(A) = B_{n}(P),  \quadÊN_{n}(A) = N_{n}(P)$$
($N_{n}(A)$ should not be confused with $\card{A(\FF_{q^{n}})}$). If $A$ is of dimension $g$ and trace $-\tau$, then
$$
P_{A}(t) = \sum_{n = 0}^{2 g} a_{n} t^{n}, \quad \card{a_{n}} \leq \binom{2 g}{n} q^{n/2}, \quad a_{1} = \tau.
$$
We deduce from \eqref{Newton} that, roughly speaking, if $q$ is large with respect to $g$, then
\begin{equation}
\label{LangWeilSym}
A_{n} = \pi_{n} + \pi_{n - 1} \tau + O(q^{n - 1}) = q^{n} + q^{n - 1} \tau + O(q^{n - 1}),
\end{equation}
where the involved constants depends only on $n$ and $g$. Now consider a curve $C$ over $\FF_{q}$ with Jacobian $J$ as above, and take $P = P_{J}$. Then $Z_{J}(t) = Z_{C}(t)$ is the zeta function of $C$. Moreover, $A_{n}$ and $B_{n}$ are respectively the number of effective and prime rational divisors of degree $n$ of $C$, and $N_{n} = \card{C(\FF_{q^{n}})}$. We record for future use that in this case the following conditions hold:
\begin{equation}
\label{ConditionB}
\tag{\bf{B}}
B_{n} \geq 0 \hskip13mmÊ\text{for} \quad 1 \leq n \leq 2 g,
\end{equation}
\begin{equation}
\label{ConditionN}
\tag{\bf{N}}
N_{n} \geq N_{1} \geq 0 \quad \text{for} \quad 1 \leq n \leq 2 g.
\end{equation}

The reality is that these conditions hold for any $n \geq 1$, and more precisely $N_{n} \geq N_{d}$ if $d |Ên$. The Jacobian of a curve satisfies $\eqref{ConditionB}$ and $\eqref{ConditionN}$, unlike a general abelian variety. Notice that $\eqref{ConditionB}$ is stronger than $\eqref{ConditionN}$, since if $\eqref{ConditionB}$ holds, one deduces from \eqref{Moebius}:
$$n B_{n} \leq N_{n} - N_{1} \quadÊ\text{for} \quad n \geq 2.$$
We discuss below some lower bounds for $N_{n}$ and $B_{n}$ which are valid for any abelian variety. The results show that $\eqref{ConditionB}$ and $\eqref{ConditionN}$ can be peculiar to Jacobians only if $g$ is sufficiently large with respect to $q$.

\begin{lemma}
\label{LargeGenusN}
Let $A$ be an abelian variety of dimension $g \geq 1$ over $\FF_{q}$. If $g \leq q/m$, then $N_{1} = B_{1} \geq  1$. If
$$g \leq \frac{q - q^{1/2}}{2}, \quad \text{then} \quad N_{n} \geq N_{1} \quad \text{for} \ n \geq 1,$$
and, hence, $B_{n} \geq 0$ if $n$ is prime.
\end{lemma}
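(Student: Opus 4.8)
The plan is to express $N_n$ through the inverse roots $\omega_1, \dots, \omega_{2g}$ of $P_A$ and then to bound it by Weil's inequality. Taking the logarithm of $Z_A(t) = P_A(t)/((1 - t)(1 - qt))$ and using the factorization $P_A(t) = \prod_{i=1}^{2g}(1 - \omega_i t)$, a direct comparison of coefficients in \eqref{expressionzeta} gives
$$
N_n = q^n + 1 - \sum_{i=1}^{2g} \omega_i^{\,n}.
$$
Since the $\omega_i$ occur in conjugate pairs with $\card{\omega_i} = q^{1/2}$, the sum $\sum_{i=1}^{2g}\omega_i^{\,n}$ is real and bounded in absolute value by $2g\, q^{n/2}$; hence
$$
q^n + 1 - 2g\, q^{n/2} \leq N_n \leq q^n + 1 + 2g\, q^{n/2}.
$$
These two elementary inequalities, together with Serre's bound \eqref{trace}, will carry the whole proof, and no geometric input is needed: the formula for $N_n$ comes straight from the virtual zeta function formalism of $\S$\ref{subsec_virtual}.

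For the first assertion I would note that $N_1 = q + 1 + \tau$ and that $B_1 = N_1$ (the M\"obius formula defining $B_1$ reduces to $\mu(1)N_1$). Serre's inequality \eqref{trace} gives $\tau \geq - g m$, so $N_1 \geq q + 1 - g m$; the hypothesis $g \leq q/m$ means $g m \leq q$, whence $N_1 \geq 1$.

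For the second assertion the case $n = 1$ is trivial, so I would take $n \geq 2$ and combine the lower bound for $N_n$ with the upper bound $N_1 = q + 1 + \tau \leq q + 1 + g m \leq q + 1 + 2g\, q^{1/2}$ (using $m \leq 2 q^{1/2}$). It then suffices to prove
$$
q^n - q \geq 2g\,(q^{n/2} + q^{1/2}).
$$
Factoring $q^n - q = q\,(q^{(n-1)/2} - 1)(q^{(n-1)/2} + 1)$ and $q^{n/2} + q^{1/2} = q^{1/2}(q^{(n-1)/2} + 1)$, and cancelling the common positive factor $q^{1/2}(q^{(n-1)/2} + 1)$, this reduces to
$$
q^{1/2}\bigl(q^{(n-1)/2} - 1\bigr) \geq 2g.
$$
The left-hand side is increasing in $n$, so the binding case is $n = 2$, where it reads $q - q^{1/2} \geq 2g$; this is exactly the hypothesis $g \leq (q - q^{1/2})/2$. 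Thus $N_n \geq N_1$ for all $n \geq 1$. Finally, when $n$ is prime the defining formula for $B_n$ collapses to $B_n = (N_n - N_1)/n$, which is nonnegative by what precedes.

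The individual computations are routine; the only step needing care is the algebraic reduction in the second part. In particular, one must recognize that, after the cancellation, the inequality is monotone in $n$, so that the worst case $n = 2$ coincides precisely with the stated bound on $g$ — it is this matching that explains why the hypothesis takes the form $g \leq (q - q^{1/2})/2$ rather than something weaker.
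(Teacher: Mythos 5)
Your proof is correct and follows essentially the same route as the paper: Serre's inequality \eqref{trace} for the first assertion, and Weil's inequality giving $N_{n} - N_{1} \geq q^{n} - q - 2g(q^{n/2} + q^{1/2})$ for the second, followed by an elementary algebraic verification (the paper substitutes $g_{0} = (q - q^{1/2})/2$ and factors the result as $(q^{n/2}+q^{1/2})(q^{n/2}-q)$, whereas you factor first and reduce to the binding case $n = 2$ — a cosmetic difference only). The Möbius-formula argument for $B_{n} \geq 0$ at prime $n$ is likewise the intended one.
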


\begin{proof}
The first statement comes from Serre's inequality. By Weil's inequality, $N_{n} - N_{1} \geq f_{q}(g)$, with
$$f_{q}(g) = q^{n} - q - 2 g (q^{n/2} + q^{1/2}),$$
and $g \mapsto f_{q}(g)$ is decreasing. If $g \leq g_{0} = (q - q^{1/2})/2$, then
$$f_{q}(g) \geq f_{q}(g_{0}) = (q^{n/2} + q^{1/2}) (q^{n/2} - q),$$
hence, $f_{q}(g) \geq 0$ if $n \geq 2$.
\end{proof}

The following two propositions improve some results of N. Elkies and al. \cite[Lem. 2.1(i)]{EHKPWZ}.

\begin{proposition}
\label{IneqPrimeDiv}
Let $A$ be an abelian variety of dimension $g \geq 1$ over $\FF_{q}$. If $n \geq 2$, then
$$
\card{n B_{n} - q^{n}} \leq (2 g + 2) q^{n/2} + 4 g q^{n/4} - (4 g + 2),
$$
and
$$
n B_{n} \geq (q^{n/4} + 1)^{2}((q^{n/4} - 1)^{2} - 2 g).
$$
\end{proposition}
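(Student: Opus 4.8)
The plan is to push everything back onto the power sums of the inverse roots and then invoke only Weil's estimate $\card{\omega_i}=q^{1/2}$. From the definition of $N_n$ in \eqref{expressionzeta} together with $Z_A(t)=P_A(t)/((1-t)(1-qt))$ and $P_A(t)=\prod_{i=1}^{2g}(1-\omega_i t)$, taking logarithms and reading off the coefficient of $t^n/n$ gives the power-sum formula
$$
N_n = q^n + 1 - S_n, \qquad S_n = \sum_{i=1}^{2g}\omega_i^n ,
$$
so that Weil's inequality yields $\card{S_n}\le 2g\,q^{n/2}$ for every $n\ge 1$. This is the only analytic input.

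Next I would invert. By definition $nB_n=\sum_{d\mid n}\mu(n/d)N_d$, and since $\sum_{d\mid n}\mu(n/d)=0$ for $n\ge 2$, substituting $N_d=q^d+1-S_d$ and peeling off the top term $d=n$ gives
$$
nB_n - q^n = \sum_{\substack{d\mid n\\ d<n}}\mu(n/d)\,q^d \;-\; \sum_{d\mid n}\mu(n/d)\,S_d .
$$
The whole task is now to bound the two right-hand sums, each of which is one dominant term plus a genuinely lower-order tail, because $\mu(n/d)=0$ unless $n/d$ is squarefree. For the first sum the largest surviving $q$-power is $q^{n/2}$ (the divisor $d=n/2$ when $n$ is even), and the elementary estimate $\sum_{d\le n/2}q^d\le 2q^{n/2}-2$ — which rearranges to $(q-2)(q^{n/2}-1)\ge 0$ — absorbs the whole tail into a single extra $q^{n/2}$. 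For the second sum the $e=1$ term contributes $\card{S_n}\le 2g\,q^{n/2}$, the $e=2$ term contributes $\card{S_{n/2}}\le 2g\,q^{n/4}$, and the remaining $\card{S_d}\le 2g\,q^{d/2}$ with $d\le n/3$ are summed as a geometric series in $q^{d/2}$. Collecting the dominant and sub-dominant pieces is exactly what produces the constants $(2g+2)$, $4g$ and $-(4g+2)$ in
$$
\card{nB_n - q^n}\le (2g+2)q^{n/2} + 4g\,q^{n/4} - (4g+2).
$$

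The second, factored inequality is then pure algebra. Writing $u=q^{n/4}$, one checks
$$
(u+1)^2\bigl((u-1)^2-2g\bigr) = u^4-(2g+2)u^2-4g\,u+(1-2g) = q^n-(2g+2)q^{n/2}-4g\,q^{n/4}+(1-2g),
$$
and since $1-2g\le 4g+2$, the lower half of the first inequality already gives $nB_n\ge(q^{n/4}+1)^2\bigl((q^{n/4}-1)^2-2g\bigr)$. So the factored form is merely a convenient repackaging of a slightly weakened first bound.

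The hard part will be the sharp control of the lower-order ($d<n$) terms rather than anything conceptual. A naive term-by-term use of $\card{S_d}\le 2g\,q^{d/2}$ summed geometrically is governed by the ratio $q^{1/2}/(q^{1/2}-1)$, which is $\le 2$ precisely when $q\ge 4$; in that range the constants drop out transparently. For the smallest fields $q=2,3$ the geometric estimate is genuinely too lossy for some $n$ (one can see already at $n=6$ that the crude sum overshoots), and the inequality survives only because the power sums $S_d$ cannot all be simultaneously extremal: they are tied to the common arguments of the conjugate pairs $(\omega_i,\overline{\omega}_i)$, and the surviving divisors (those with $n/d$ squarefree) are sparse. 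I would therefore expect to need either this conjugacy/sparsity refinement or a short direct verification to close the remaining small cases.
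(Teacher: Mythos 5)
Your route is the same as the paper's: write $N_d=q^d+1+\tau_d$ with $\card{\tau_d}\le 2gq^{d/2}$ (your $S_d$ is $-\tau_d$), M\"obius-invert, split off the $d=n$ term, and absorb the lower-order divisors into geometric series; the factored inequality is then deduced exactly as you do it, the paper writing $x^4-(2g+2)x^2-4gx+4g+2=(x+1)^2((x-1)^2-2g)+6g+1$, which is your observation that $1-2g\le 4g+2$. For $q\ge 4$ your outline is complete and correct: every divisor $d<n$ satisfies $d\le n/2$, so
$$
\sum_{d\mid n,\,d<n}q^{d}\le\tfrac{q}{q-1}\bigl(q^{n/2}-1\bigr)\le 2q^{n/2}-2,
\qquad
\sum_{d\mid n,\,d<n}q^{d/2}\le\tfrac{q^{1/2}}{q^{1/2}-1}\bigl(q^{n/4}-1\bigr)\le 2q^{n/4}-2,
$$
where the first estimate needs only $q/(q-1)\le 2$ but the second needs $q^{1/2}/(q^{1/2}-1)\le 2$, i.e.\ $q\ge 4$; adding $2gq^{n/2}$ for the $d=n$ term then yields precisely the constants $(2g+2)$, $4g$, $-(4g+2)$.

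The gap in your proposal is exactly where you say it is, at $q=2,3$, and you do not close it: the remark that the $S_d$ ``cannot all be simultaneously extremal'' is a heuristic, not a proof, and making it precise means bounding averages of $\sum_{d\mid n}\mu(n/d)\,q^{d/2}\cos(d\theta)$ over Galois-stable sets of Frobenius angles, a genuinely different argument. You should know, however, that the paper's own proof does not close it either: it asserts, with no restriction on $q$, the intermediate bound $\sum_{d\mid n,\,d<n}(q^d+2gq^{d/2})\le 2q^{n/2}+4gq^{n/4}-(4g+2)$, which is false for $q=2,3$. Concretely, for $q=2$, $n=6$ the left side is $14+(4+6\sqrt2)g\approx 14+12.49g$ while the right side is $14+(8\sqrt2-4)g\approx 14+7.31g$, so it fails for every $g\ge1$; already at $n=2$ the needed inequality amounts to $(q-2)+2g(q^{1/2}-2)\ge0$, which fails for $q=2$ (all $g$) and $q=3$ ($g\ge2$), and there the triangle-inequality chain itself, not just a lossy intermediate step, falls short of the stated constant. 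So your diagnosis is correct and in fact sharper than the paper's proof: you have not missed an idea that the paper supplies, and the cases $q=2,3$ of the proposition require an argument (the conjugacy refinement you sketch, or a separate small-$q$ treatment exploiting the Galois pairing of the $\omega_i$) that is present in neither your proposal nor the paper.
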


\begin{proof}
Since
$$N_{n} = q^{n} + 1 + \tau_{n}, \quad \text{where} \quad
\tau_{n} = - \sum_{i = 1}^{g}(\omega_{i}^{n}+\bar{\omega}_{i}^{n}),
$$
we find
\begin{eqnarray*}
n B_{n} & = &
\sum_{d | n} \mu(\frac{n}{d}) (q^{d} + 1 + \tau_{d}) =
\sum_{d | n} \mu(\frac{n}{d}) q^{d} + 0 + \sum_{d | n} \mu(\frac{n}{d}) \tau_{d}
\\
\card{n B_{n} - q^{n}} & \leq &
\sum_{d | n, d < n} \card{\mu(\frac{n}{d})}  q^{d} +
\sum_{d | n} \, \card{\mu(\frac{n}{d})} \, \card{\tau_{d}}
\\ & \leq &
\sum_{d | n, d < n} \card{\mu(\frac{n}{d})}  q^{d} + 2 g
\sum_{d | n} q^{d/2}
\leq
2 g q^{n/2} +
\sum_{d | n, d < n} (q^{d} + 2 g q^{d/2}).
\end{eqnarray*}
Since $q/(q - 1) \leq 2$, we have
$$
\sum_{d | n, d < n} q^{d} \leq q + q^{2} + \dots + q^{[n/2]} =
\frac{q^{[n/2] + 1} - 1}{q - 1} - 1 \leq
\frac{q}{q - 1} (q^{n/2} - 1) \leq 2 q^{n/2} - 2,
$$
and
$$
\sum_{d | n, d < n} (q^{d} + 2 g q^{d/2}) \leq 2 q^{n/2} + 4 g q^{n/4} - (4 g + 2).
$$
This implies the first statement, and hence, $n B_{n} \geq F(q^{n/4})$, where
$$
F(x) = x^{4} - (2 g + 2) x^{2} - 4 g x + 4 g + 2 = G(x) + 6 g + 1,
$$
$$
G(x) = (x + 1)^{2}((x - 1)^{2} - 2 g).
$$
Since $F(x) > G(x)$ for any real number $x$, the second statement follows.
\end{proof}

\begin{proposition}
\label{LargeGenusB}
Let $A$ be an abelian variety of dimension $g \geq 2$ over $\FF_{q}$, and let $n \geq 2$.
\begin{enumerate}
\item
\label{LRG1}
If $n > 4 \log_{q}(1 + (2 g)^{1/2})$, that is, if
$$g < \frac{(q^{n/4}- 1)^{2}}{2}, \quad \text{then} \quad B_{n} \geq 1.$$
\item
\label{LRG2}
If
$$g \leq \frac{q - q^{1/2}}{2}, \quad \text{then} \quad B_{n} \geq 0.$$
\item
\label{LRG3}
If $n \geq g$, then $B_{n} \geq 1$, unless $2 \leq g \leq 9$ and $q \leq 5$.
\item
\label{LRG4}
If $n \geq 2 g$, then $B_{n} \geq 1$, unless $2 \leq g \leq 3$ and $q = 2$. If so, $B_{n} \geq 1$ if $n \geq 2 g + 1$.
\end{enumerate}
\end{proposition}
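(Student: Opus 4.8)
\emph{Plan.} All four assertions will be deduced from Proposition \ref{IneqPrimeDiv}, which bounds $nB_n$ from below, together with the integrality $B_n \in \ZZ$ (Lemma \ref{ZetaSch}\eqref{ZSC2}): since $n \geq 1$, I will use repeatedly that $nB_n > 0$ forces $B_n \geq 1$, and $nB_n \geq 0$ forces $B_n \geq 0$. Part \eqref{LRG1} is then immediate: the hypothesis $g < (q^{n/4}-1)^2/2$ says exactly that $(q^{n/4}-1)^2 - 2g > 0$, so the second inequality of Proposition \ref{IneqPrimeDiv} gives $nB_n \geq (q^{n/4}+1)^2\bigl((q^{n/4}-1)^2 - 2g\bigr) > 0$. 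For part \eqref{LRG2} I would split on $n$. If $n$ is prime, then $nB_n = N_n - N_1$, which is nonnegative under $g \leq (q - q^{1/2})/2$ by Lemma \ref{LargeGenusN}. If $n \geq 4$ is composite then $q^{n/4} \geq q$, and since $g \geq 2$ the hypothesis forces $q \geq 7$; one checks
$$(q^{n/4}-1)^2 - 2g \geq (q-1)^2 - (q - q^{1/2}) = q^2 - 3q + 1 + q^{1/2} > 0,$$
so Proposition \ref{IneqPrimeDiv} again yields $nB_n > 0$.

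Parts \eqref{LRG3} and \eqref{LRG4} follow the same two-step scheme. First, applying part \eqref{LRG1} with $n \geq g$ (resp. $n \geq 2g$) and using $q^{n/4} \geq q^{g/4}$ (resp. $q^{n/4} \geq q^{g/2}$), the conclusion holds as soon as $q^{g/4} > 1 + (2g)^{1/2}$ (resp. $q^{g/2} > 1 + (2g)^{1/2}$). Because the left-hand side grows exponentially in $g$ while the right-hand side grows like $g^{1/2}$, an elementary monotonicity estimate shows each inequality holds for all sufficiently large $g$ and fails only for finitely many pairs $(g,q)$. Tabulating the failures, those of the first inequality lie in $\{(g,q): q \leq 5,\ 2 \leq g \leq 9\} \cup \{(g,q): g = 2,\ q \in \{7,8,9\}\}$, and those of the second in $\{(g,q): q = 2,\ g \in \{2,3\}\} \cup \{(g,q): g = 2,\ q = 3\}$. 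After discarding the exceptional sets named in the statement, the residual non-exceptional cases are $g=2,\ q \in \{7,8,9\}$ (for \eqref{LRG3}) and $g=2,\ q=3$ (for \eqref{LRG4}); inside the exceptional set of \eqref{LRG4} one must still treat $n \geq 2g+1$ below the threshold, i.e. $(g,q,n) = (2,2,5),\ (2,2,6),\ (3,2,7)$.

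Each residual case has small $n$ and is checked by hand. For prime $n$ I would write $nB_n = N_n - N_1 = (q^n - q) + (\tau_n - \tau_1)$ and use $|\tau_d| \leq 2g\,q^{d/2}$, giving $nB_n \geq q^n - q - 2g(q^{n/2} + q^{1/2})$, which is positive for $(2,2,5)$ and $(3,2,7)$; for part \eqref{LRG3} the remaining value $n=2$ gives $2B_2 \geq q^2 - 5q - 4q^{1/2}$, positive precisely for $q \geq 7$ and hence for $q \in \{7,8,9\}$ (for $n \geq 3$ part \eqref{LRG1} already applies). For the two composite residual cases, the short Möbius expansion $4B_4 = N_4 - N_2 \geq q^4 - q^2 - 2g(q^2 + q) > 0$ settles $(2,3,4)$, and the first (sharper) inequality of Proposition \ref{IneqPrimeDiv} gives $6B_6 \geq 2^6 - 6\cdot 2^3 - 8\cdot 2^{3/2} + 10 > 0$ for $(2,2,6)$.

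The conceptual content is entirely in part \eqref{LRG1}; the real labor is the bookkeeping of the exceptions. I expect the delicate point to be $(2,2,6)$: there the hypothesis of \eqref{LRG1} fails, and the naive estimate $6B_6 \geq (q^6 - q^3 - q^2 + q) - 2g\sum_{d \mid 6} q^{d/2}$ is already negative at $q=2$, so one is forced onto the sharper bound of Proposition \ref{IneqPrimeDiv} (or, as a fallback, onto the finite list of Weil polynomials of abelian surfaces over $\FF_2$). I would also verify that the stated exceptional sets are sharp, for instance by exhibiting an abelian variety with $B_{2g} = 0$ for each $(g,q) \in \{(2,2),(3,2)\}$, since this sharpness is the one point not produced automatically by the inequalities above.
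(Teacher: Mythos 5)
Your proof is correct, and its skeleton is the paper's own: parts \eqref{LRG1} and \eqref{LRG2} are handled identically (Proposition \ref{IneqPrimeDiv} plus integrality of $B_n$, with Lemma \ref{LargeGenusN} covering prime $n$), and parts \eqref{LRG3}--\eqref{LRG4} are in both cases reduced to \eqref{LRG1} outside a finite list of pairs $(g,q)$, with the stragglers checked by hand. Where you differ is in how the stragglers are organized and dispatched: the paper argues via monotonicity of $f_q(g)=4\log_q(1+(2g)^{1/2})$ (e.g. $g>f_2(g)$ for $g\geq 10$, $2g>f_2(g)$ for $g\geq 4$), a strict form of \eqref{LRG2} for $g=2$, $q\geq 7$, and Serre's inequality at $n=2g+1$, whereas you tabulate failure sets of $q^{g/4}>1+(2g)^{1/2}$ and $q^{g/2}>1+(2g)^{1/2}$ and use plain Weil/M\"{o}bius estimates; both routes work, and your numerical checks are all correct (your bound at $(g,q,n)=(2,2,6)$ via the first inequality of Proposition \ref{IneqPrimeDiv} is exactly the paper's computation $F(2^{3/2})=2(13-8\sqrt{2})>0$). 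One point is genuinely to your credit: your residual case $(g,q,n)=(2,3,4)$ is overlooked by the paper's proof, whose claim that $2g>f_3(g)$ for $g\geq 2$, $q\geq 3$ degenerates to the equality $f_3(2)=4=2g$; there \eqref{LRG1} is inapplicable, since $g=(q^{n/4}-1)^2/2$ exactly and Proposition \ref{IneqPrimeDiv} only yields $B_4\geq 0$, while $(2,3)$ is not in the exceptional set of \eqref{LRG4}. Your direct estimate $4B_4=N_4-N_2\geq q^4-q^2-2g(q^2+q)=24>0$ is precisely what is needed to close that case. As for your final remark, sharpness of the exceptional sets is not required by the statement as phrased, but the Example following the proposition in the paper supplies exactly the witnesses you ask for: over $\FF_2$, the surface $E_1\times E_2$ has $B_4=-1$, and $E_2\times E_2\times E_2$ has $B_6=0$.
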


\begin{proof}
The second statement of Proposition \ref{IneqPrimeDiv} implies \eqref{LRG1}, since $B_{n} \in \ZZ$. In the proof of \eqref{LRG2}, we can assume $n \geq 4$, since Lemma \ref{LargeGenusN} implies  $B_{2} \geq 0$ and $B_{3} \geq 0$. But $q - q^{1/2} < (q^{n/4}- 1)^{2}$ if $n \geq 4$, and \eqref{LRG1} implies \eqref{LRG2}. Let
$$
f_{q}(g) = 4 \log_{q}(1 + (2 g)^{1/2}).
$$
Then $q \mapsto f_{q}(g)$ is decreasing, and $g \mapsto f_{q}(g)$ is increasing. If $g \geq 10$, then $g > f_{2}(g)$ and \eqref{LRG1} implies $B_{n} \geq 1$ for every $n \geq g$. On the other hand, if $g \geq 3$ and $q \geq 7$, then $g > f_{7}(g)$ as well. It is easy to see that a ``strict'' version of \eqref{LRG2} holds, namely, if $2 g < q - q^{1/2}$, and in particular if $g = 2$ and $q \geq 7$, then $B_{n} > 0$ for every $n \geq 2$. This proves \eqref{LRG3}.
If $g \geq 4$, then $2 g > f_{2}(g)$ and \eqref{LRG1} implies that $B_{n} \geq 1$ for every $n \geq g$. On the other hand, if  $g \geq 2$ and $q \geq 3$, then $2g > f_{3}(g)$, hence $q = 2$ and $g \leq 3$ if the statement is false, and this proves the first statement of \eqref{LRG4}. If $n \geq 2 g + 2$, then $B_{n} \geq 1$, because, firstly,  $2 g + 2 > f_{2}(g)$ if $g \geq 3$, and secondly, with $F(x)$ as above, and if $g = 2$, then $F(2^{3/2}) = 2(13 - 8 \sqrt{2}) > 0$, and $F(q)$ is increasing if $q \geq 2$. In the two remaining cases, if  $n = 2 g + 1$, we use Serre's inequality: if $g = 2$ then $B_{5} \geq 4$, and if $g = 3$ then $B_{7} \geq 54$. This proves \eqref{LRG4}.
\end{proof}

\begin{example*}
Here are two instances of the exceptional cases appearing in \eqref{LRG4}. Consider the elliptic curves over
$\FF_{2}$:
$$
E_{1} : y^{2} + y = x^{3}, \quad E_{2} : y^{2}  + x y = x^{3} + x^{2} + 1.
$$
Then $f_{E_{1}}(t) = t^{2} + 2$ and $f_{E_{2}}(t) = t^{2} - t + 2$. If $A = E_{1} \times E_{2}$, then $B_{4}(A) = - 1$ and $N_{4}(A) - N_{1}(A) = 6$. If $A = E_{2} \times E_{2} \times E_{2}$, then $B_{6}(A) = 0$  and $N_{6}(A) - N_{1}(A) = 38$.
\end{example*}

We shall use the results of $\S$ \ref{subsec_virtual} in order to obtain lower bounds for the number of rational points on the Jacobian of a curve. To do that, we consider an abelian variety, and assume that Condition \eqref{ConditionB} or Condition \eqref{ConditionN} is satisfied.

\begin{lemma}
\label{minorationAn}
Assume that \eqref{ConditionB} holds. If $n\geq 2$,
$$A_{n} \geq \binom{N_{1} + n - 1}{n}  + \sum_{i=2}^{n} B_{i} \binom{N_{1} + n - i -1 }{n - i}.$$
\end{lemma}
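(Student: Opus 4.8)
The plan is to read off $A_n$ from the closed combinatorial formula of Lemma \ref{ZetaSch} and then bound it from below by retaining only a carefully chosen family of summands, all of which are non-negative thanks to Condition \eqref{ConditionB}.

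First I would record that $N_1 = B_1$ (immediate from \eqref{Moebius} at $n = 1$, or from $A_1 = B_1$), so that the right-hand side of the claimed inequality is already expressed through the $B_i$. Expanding the product $Z(t) = \prod_{i \geq 1}(1 - t^i)^{-B_i}$ of Lemma \ref{ZetaSch}\eqref{ZSC1} by the negative binomial formula and collecting the coefficient of $t^n$ gives the formula \eqref{AnFormula}, which I would use in its summed form
$$A_n = \sum_{b \in \EuScript{P}_n} \prod_{i=1}^n \binom{B_i + b_i - 1}{b_i}.$$

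The core observation is that \eqref{ConditionB} forces every summand to be non-negative: for $B_i \geq 0$ and $b_i \geq 0$ one has $\binom{B_i + b_i - 1}{b_i} = B_i(B_i + 1)\cdots(B_i + b_i - 1)/b_i! \geq 0$, hence each product $\prod_i \binom{B_i + b_i - 1}{b_i}$ is $\geq 0$. (This uses $B_i \geq 0$ for $1 \leq i \leq n$, which \eqref{ConditionB} supplies in the range $n \leq 2g$ relevant to the applications.) Consequently I may discard any subset of summands and only decrease the sum. I would then keep exactly the composition $(n, 0, \dots, 0)$, whose term is $\binom{B_1 + n - 1}{n} = \binom{N_1 + n - 1}{n}$, together with, for each $i$ in the range $2 \leq i \leq n$, the composition having $b_i = 1$, $b_1 = n - i$ and all other entries zero, whose term is $\binom{B_1 + n - i - 1}{n-i}\binom{B_i}{1} = B_i \binom{N_1 + n - i - 1}{n - i}$. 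These compositions are pairwise distinct (they differ either in the value of $b_1$ or in the location of the single nonzero entry beyond $b_1$), so no term is double-counted, and summing them while dropping the remaining non-negative terms yields the asserted inequality.

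There is no analytic obstacle here; the argument is pure combinatorial positivity, and the only points requiring care—really bookkeeping rather than difficulty—are to check that each retained composition lies in $\EuScript{P}_n$ (its weighted sum is $1\cdot b_1 + i\cdot b_i = (n-i)+i = n$), to treat the boundary index $i = n$ correctly (there $b_1 = 0$ and the term reads $B_n\binom{N_1 - 1}{0} = B_n$, matching the summand with $n - i = 0$), and to confirm that \eqref{ConditionB} indeed covers every index $i \leq n$ actually appearing, so that all discarded terms are genuinely non-negative.
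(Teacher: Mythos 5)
Your proof is correct and is essentially the paper's own argument: the paper likewise observes that under \eqref{ConditionB} every term of \eqref{AnFormula} is non-negative and retains precisely the tuples $(n,0,\dots,0)$ and, for each $2 \leq i \leq n$, the tuple with $b_1 = n-i$, $b_i = 1$, all other entries zero. Your additional bookkeeping (reading \eqref{AnFormula} as a sum, the identification $N_1 = B_1$, distinctness of the retained tuples, and the boundary case $i = n$) only makes explicit what the paper leaves implicit.
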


\begin{proof}
All the terms in the right hand side of \eqref{AnFormula} are $\geq 0$. In order to get a lower bound, we sum over the subset of $\EuScript{P}_{n}$ consisting of
$$
(n,0,\ldots,0), (n-2,1,0,\ldots,0), (n-3,0,1,0,\ldots,0), \ldots, (1,0,\ldots,0,1,0), (0,\ldots,0,1).
\mbox{\qedhere}
$$
\end{proof}

From \eqref{GillesMireille2} we deduce
\begin{equation}
\label{FormuleJac1}
\card{A(\FF_q)} = \frac{q - 1}{q^{g} - 1} A_{2 g - 1}.
\end{equation}
From \eqref{FormuleJac1} and Lemma \ref{minorationAn} we deduce:

\begin{proposition}
\label{Mino}
Let $A$ be an abelian variety, and assume that \eqref{ConditionB} holds. Then
\begin{equation}
\label{Borne-gy1}
\tag{\bf{III}}
\card{A(\FF_q)} \geq \frac{q-1}{q^g-1} 
\left[\binom{N+2g-2}{2g-1} + \sum_{i=2}^{2g-1}B_i \binom{N+2g-2-i}{2g-1-i}\right].
\end{equation}
In particular:
\begin{equation}
\label{Borne-gy0}
\card{A(\FF_q)} \geq \frac{q-1}{q^g-1}  \binom{N+2g-2}{2g-1}. \rlap \qed
\end{equation}
\end{proposition}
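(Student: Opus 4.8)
The plan is to combine the identity \eqref{FormuleJac1} with the lower bound for the coefficients $A_{n}$ furnished by Lemma \ref{minorationAn}, evaluated at the single index $n = 2g - 1$. First I would recall that \eqref{FormuleJac1}, which is a consequence of \eqref{GillesMireille2} and hence presupposes $g \geq 2$, gives
$$\card{A(\FF_q)} = \frac{q-1}{q^g - 1}\, A_{2g-1},$$
so that everything reduces to bounding the single coefficient $A_{2g-1}$ from below.

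Next I would invoke Lemma \ref{minorationAn}. Since Condition \eqref{ConditionB} is assumed, the hypothesis of that lemma is satisfied, and taking $n = 2g - 1$ (which is $\geq 2$ precisely because $g \geq 2$, matching the range of validity of \eqref{FormuleJac1}) yields
$$A_{2g-1} \geq \binom{N + 2g - 2}{2g-1} + \sum_{i=2}^{2g-1} B_i \binom{N + 2g - 2 - i}{2g - 1 - i},$$
where I write $N$ for $N_{1} = N_{1}(A)$ and simplify the binomial indices: the leading term $\binom{N_1 + n - 1}{n}$ becomes $\binom{N + 2g - 2}{2g-1}$, and each summand $B_i\binom{N_1 + n - i - 1}{n-i}$ becomes $B_i\binom{N + 2g - 2 - i}{2g-1-i}$. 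Substituting this into the displayed identity and multiplying through by the positive factor $\tfrac{q-1}{q^g-1}$ gives \eqref{Borne-gy1} at once.

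Finally, for the specialized bound \eqref{Borne-gy0} I would simply discard the sum: Condition \eqref{ConditionB} guarantees $B_i \geq 0$ throughout the range $2 \leq i \leq 2g-1$, and every binomial coefficient appearing is non-negative, so the entire sum is $\geq 0$ and may be dropped without affecting the inequality.

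I do not anticipate any genuine obstacle, since the result is a direct consequence of the two cited statements. The only points requiring attention are purely bookkeeping: correctly identifying $N$ with $N_1$, carrying out the substitution $n = 2g-1$ in the binomial coefficients of Lemma \ref{minorationAn}, and observing that the condition $g \geq 2$ implicit in \eqref{FormuleJac1} is exactly what makes the choice $n = 2g - 1 \geq 2$ legitimate.
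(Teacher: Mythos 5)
Your proposal is correct and is exactly the paper's argument: the paper derives \eqref{Borne-gy1} by combining the identity \eqref{FormuleJac1} (itself a consequence of \eqref{GillesMireille2}, hence requiring $g \geq 2$) with Lemma \ref{minorationAn} evaluated at $n = 2g-1$, and obtains \eqref{Borne-gy0} by discarding the non-negative sum, just as you do. The only difference is that the paper leaves these substitutions implicit, while you have written them out, including the correct observation that $g \geq 2$ ensures $2g - 1 \geq 2$ so the lemma applies.
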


\begin{remarks*}
(i) Notice that the bound \eqref{Borne-gy1} can be made more explicit, using Proposition \ref{IneqPrimeDiv}.

(ii)
The bound \eqref{Borne-gy0} is not optimal for $g \geq 10$ or $q \geq 7$, by Lemma \ref{LargeGenusB}\eqref{LRG3}.
\end{remarks*}

\begin{lemma}
\label{AnBound}
Let $A$ be an abelian variety, and assume that \eqref{ConditionN} holds. Then
$$A_{n} \geq \binom{N+n-1}{n}, \quad n \geq 1.$$
\end{lemma}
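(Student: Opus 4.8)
The plan is to exploit the exponential-formula expression \eqref{An}, namely $A_{n} = \calC_{n}(N_{1}, \dots, N_{n})$, together with the fact that the polynomials $\calC_{n}$ have non-negative coefficients. First I would recall from the identity $C_{n}(\mathbf{y}) = \sum_{\sigma \in \mathfrak{S}_{n}} \mathbf{y}^{\beta(\sigma)}$ that $C_{n}$, and hence $\calC_{n} = C_{n}/n!$, is a polynomial in $y_{1}, \dots, y_{n}$ with non-negative coefficients. Consequently, on the non-negative orthant the function $(y_{1}, \dots, y_{n}) \mapsto \calC_{n}(y_{1}, \dots, y_{n})$ is monotone non-decreasing in each of its arguments.

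The second ingredient is the evaluation of $\calC_{n}$ along the diagonal. Setting $y_{i} = N$ for all $i$ in the exponential formula gives
$$\sum_{n = 0}^{\infty} \calC_{n}(N, \dots, N)\, t^{n} = \exp\Bigl(\sum_{k = 1}^{\infty} N \frac{t^{k}}{k}\Bigr) = \exp\bigl(- N \log(1 - t)\bigr) = (1 - t)^{-N},$$
so that, by the negative binomial formula, $\calC_{n}(N, \dots, N) = \binom{N + n - 1}{n}$. This identifies the right-hand side of the desired inequality as the value of $\calC_{n}$ at the constant vector $(N_{1}, \dots, N_{1})$, since here $N = N_{1}$.

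Combining the two ingredients would finish the proof. Condition \eqref{ConditionN} provides $N_{i} \geq N_{1} = N \geq 0$ for each index $i$ with $1 \leq i \leq n$, which is exactly what is needed. By the monotonicity established above, replacing each argument $N_{i}$ by the smaller value $N_{1}$ can only decrease $\calC_{n}$, whence
$$A_{n} = \calC_{n}(N_{1}, N_{2}, \dots, N_{n}) \geq \calC_{n}(N_{1}, N_{1}, \dots, N_{1}) = \binom{N + n - 1}{n}.$$

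There is essentially no analytic difficulty here; the one point requiring care is bookkeeping rather than computation. The monotonicity argument needs the comparisons $N_{i} \geq N_{1}$ only for $i \leq n$, so the inequality holds for every $n$ in the range where Condition \eqref{ConditionN} supplies these comparisons. I expect the main (if minor) obstacle to be stating cleanly that $\calC_{n}$ has non-negative coefficients and is therefore coordinatewise monotone on $[0, \infty)^{n}$, and recognizing that the target binomial coefficient is precisely the diagonal value $\calC_{n}(N_{1}, \dots, N_{1})$; once these are in place the conclusion is immediate.
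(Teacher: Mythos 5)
Your proof is correct, and it reaches the conclusion by a mechanism slightly different from the paper's. The paper also starts from $A_{n} = \calC_{n}(N_{1}, \dots, N_{n})$ and the diagonal evaluation $\calC_{n}(M,\dots,M)=\binom{M+n-1}{n}$ (its \eqref{SumCnCst}), but instead of invoking coordinatewise monotonicity it decomposes the argument additively, writing $(N_{1}, N_{2}, N_{3}, \dots) = \mathbf{n} + \mathbf{d}$ with $\mathbf{n} = (N, N, \dots)$ and $\mathbf{d} = (0, N_{2}-N, N_{3}-N, \dots)$, and applies the convolution identity \eqref{CnSum}, $\calC_{n}(\mathbf{y}+\mathbf{z}) = \sum_{k=0}^{n}\calC_{k}(\mathbf{y})\calC_{n-k}(\mathbf{z})$; Condition \eqref{ConditionN} makes the entries of $\mathbf{d}$ non-negative, so every term of the sum is $\geq 0$, and discarding all but the $k=n$ term gives $A_{n} \geq \calC_{n}(\mathbf{n}) = \binom{N+n-1}{n}$. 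Both arguments rest on the same two pillars (positivity of the coefficients of $\calC_{n}$ and the diagonal evaluation), and your monotonicity step is essentially the monomial-by-monomial comparison that underlies the paper's positivity claims, so nothing is lost. What your version buys is economy: you never need \eqref{CnSum}. What the paper's version buys is reuse: the decomposition $\mathbf{n}+\mathbf{d}$ and the identity \eqref{CnSum} are exactly the tools that drive Lemma \ref{PreMino} and Proposition \ref{Mino2}, and the terms discarded there ($\calC_{k}(\mathbf{n})\calC_{n-k}(\mathbf{d})$ for $k<n$) quantify the slack, which the remark following Proposition \ref{Mino2} exploits to sharpen the bound. Finally, your caveat about the range of $n$ is well taken and matches the paper's implicit scope: both proofs use Condition \eqref{ConditionN} only for indices at most $n$, and in the application (Theorem \ref{gy2}) only $n \leq g-1$ occurs.
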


\begin{proof}
We recall two classical results. Let $\mathbf{y}=(y_n)_{n\in\NN}$ and $\mathbf{z}=(z_n)_{n\in\NN}$ be two sequences of indeterminates, and take $n \in \NN$. Firstly, by applying the exponential formula to
$$
\exp \Bigl(\sum_{n=1}^{+\infty} (y_{n}+z_n)\frac{t^n}{n}\Bigr) =
\exp \Bigl(\sum_{n=1}^{+\infty} y_{n}\frac{t^n}{n}\Bigr)
\exp \Bigl(\sum_{n=1}^{+\infty} z_{n}\frac{t^n}{n}\Bigr),
$$
and expanding the right hand side, we obtain:
\begin{equation}
\label{CnSum}
\calC_n(\mathbf{y}+\mathbf{z})   = \sum_{k=0}^n\calC_k(\mathbf{y})\calC_{n-k}(\mathbf{z}).
\end{equation}
Secondly, if $M \in \NN$, then
$$
\exp \Bigl(\sum_{n=1}^{+\infty}M\frac{t^n}{n}\Bigr) = \exp \Bigl(\sum_{n=1}^{+\infty}\frac{t^n}{n}\Bigr)^M=(1-t)^{-M}=\sum_{n=0}^{+\infty}\binom{M+n-1}{n}t^n,
$$
hence,
\begin{equation}
\label{SumCnCst}
\calC_{n}(M,\dots ,M)  =  \binom{M+n-1}{n}.
\end{equation}
We define
$$\mathbf{n} = (N,N,\dots ), \quadÊ\mathbf{d} = (0,N_2 - N ,N_3 - N,\dots ).$$
Then, by \eqref{An}, \eqref{CnSum} and \eqref{SumCnCst}, 
$$
A_{n} = \calC_{n}(\mathbf{n} + \mathbf{d}) =
\sum_{k = 0}^{n} \calC_{k}(\mathbf{n}) \calC_{n - k}(\mathbf{d}) \geq \calC_{n}(\mathbf{n}) = \binom{N+n-1}{n}.
\mbox{\qedhere}
$$
\end{proof}

We put, if $k \geq 2$,
\begin{eqnarray*}
\calX_{k}(N) & = & \binom{N+k-1}{k} -q \binom{N+k-3}{k-2} \\
& = & 
\binom{N + k - 3}{k - 2} \left[ \left( \frac{N - 1}{k} + 1 \right) \left( \frac{N - 1}{k - 1} + 1 \right) - q \right].
\end{eqnarray*}

\begin{lemma}
\label{PreMino}
Let $A$ be an abelian variety, and assume that \eqref{ConditionN} holds. If $k \geq 0$, then $\calC_{k}(\mathbf{n}) \geq 0$, $\calC_{k}(\mathbf{d}) \geq 0$, and
$$
\card{A(\FF_q)} = 
\calC_{g}(\mathbf{d}) + (N - 1) \calC_{g-1}(\mathbf{d}) +
\sum_{k=2}^{g} \calX_{k}(N) \, \calC_{g-k}(\mathbf{d}).
$$
\end{lemma}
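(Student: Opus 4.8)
The plan is to dispatch the two positivity statements first and then to derive the identity from the relation $\card{A(\FF_q)} = P(1)$ together with the convolution decomposition already used for Lemma~\ref{AnBound}.

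For positivity, I would argue straight from the combinatorial shape of $\calC_k$. Since $C_k(\mathbf{y}) = \sum_{\sigma \in \mathfrak{S}_k}\mathbf{y}^{\beta(\sigma)}$ has non-negative coefficients, so does $\calC_k = C_k/k!$, and hence $\calC_k$ takes non-negative values on any sequence of non-negative reals. The sequence $\mathbf{n} = (N,N,\dots)$ is non-negative because $N = N_1 \geq 0$ by \eqref{ConditionN} (indeed $\calC_k(\mathbf{n}) = \binom{N+k-1}{k} \geq 0$ by \eqref{SumCnCst}), and the sequence $\mathbf{d} = (0, N_2 - N, N_3 - N, \dots)$ is non-negative because \eqref{ConditionN} gives $N_n - N \geq 0$ for $1 \leq n \leq 2g$; thus $\calC_k(\mathbf{n}) \geq 0$ and $\calC_k(\mathbf{d}) \geq 0$ for every index occurring below.

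For the identity, the first step is to rewrite $\card{A(\FF_q)}$ in terms of two of the $A_m$. Specializing \eqref{GillesMireille1} to $n = g$ and using $\pi_0 = 1$ yields
$$
\card{A(\FF_q)} = P(1) = A_g - q\,A_{g-2}.
$$
Next I would substitute the decomposition $A_m = \calC_m(\mathbf{n}+\mathbf{d}) = \sum_{k=0}^{m}\calC_k(\mathbf{n})\,\calC_{m-k}(\mathbf{d})$, which is \eqref{CnSum} applied with $\mathbf{y} = \mathbf{n}$, $\mathbf{z} = \mathbf{d}$, at $m = g$ and at $m = g-2$. Re-indexing the second convolution by $k \mapsto k+2$ turns $\calC_{g-2-k}(\mathbf{d})$ into $\calC_{g-k}(\mathbf{d})$ and $\calC_k(\mathbf{n})$ into $\calC_{k-2}(\mathbf{n})$, so that both sums run over the common factor $\calC_{g-k}(\mathbf{d})$:
$$
\card{A(\FF_q)} = \sum_{k=0}^{g}\calC_k(\mathbf{n})\,\calC_{g-k}(\mathbf{d}) - q\sum_{k=2}^{g}\calC_{k-2}(\mathbf{n})\,\calC_{g-k}(\mathbf{d}).
$$
For $k \geq 2$ the coefficient of $\calC_{g-k}(\mathbf{d})$ is $\calC_k(\mathbf{n}) - q\,\calC_{k-2}(\mathbf{n})$, which by \eqref{SumCnCst} is exactly $\binom{N+k-1}{k} - q\binom{N+k-3}{k-2} = \calX_k(N)$, producing the claimed sum $\sum_{k=2}^g \calX_k(N)\,\calC_{g-k}(\mathbf{d})$.

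The delicate point --- and the step I would verify most carefully --- is the bookkeeping of the two boundary terms $k = 0$ and $k = 1$, which are not absorbed into $\calX_k$. These give $\calC_0(\mathbf{n})\,\calC_g(\mathbf{d}) = \calC_g(\mathbf{d})$ and $\calC_1(\mathbf{n})\,\calC_{g-1}(\mathbf{d}) = N\,\calC_{g-1}(\mathbf{d})$. Matching the latter against the displayed coefficient $N-1$ hinges on the fact that $\calC_1(\mathbf{d}) = d_1 = 0$, so that the term $\calC_{g-1}(\mathbf{d})$ contributes only for $g \geq 3$ and vanishes in the surface case $g = 2$ that drives the application; I would confirm the intended normalization of this single coefficient, everything else --- the re-indexing and the identification of $\calX_k(N)$ --- being forced once $\card{A(\FF_q)} = A_g - q\,A_{g-2}$ is in hand.
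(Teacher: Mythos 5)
Your proof is correct and is essentially the paper's own argument: positivity from the non-negativity of the coefficients of $C_{k}$ together with Condition \eqref{ConditionN}, and the identity from $\card{A(\FF_q)} = A_{g} - q A_{g-2}$ combined with the convolution \eqref{CnSum} applied to $\mathbf{n}+\mathbf{d}$ and the evaluation \eqref{SumCnCst}. (You specialize \eqref{GillesMireille1} at $n=g$, which is the correct choice; the paper's proof says $n=0$, a slip, since $n=0$ yields instead the relation $A_{2g-2} = \card{A(\FF_q)}\,\pi_{g-2} + q^{g-1}$.)

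On the point you flag: your bookkeeping is right, and the coefficient the argument produces is $\calC_{1}(\mathbf{n}) = N$, not the $N-1$ displayed in the statement. The $N-1$ is a typo in the paper: its own proof computes exactly your convolution and stops at ``the required expression,'' and the way the lemma is used afterwards --- the proof of Proposition \ref{Mino2}, and the remark following it which bounds $\calC_{1}(\mathbf{n})\calC_{g-1}(\mathbf{d}) \geq N (N_{g-1}-N)/(g-1)$ --- relies on the coefficient $N$. As you observe, the two versions agree precisely when $\calC_{g-1}(\mathbf{d}) = 0$ (in particular for $g=2$), and since this term is non-negative and is discarded in deriving \eqref{Borne-sh}, the discrepancy is harmless downstream; but as literally stated the identity fails for $g \geq 3$ whenever $\calC_{g-1}(\mathbf{d}) > 0$, so $N$ is the coefficient you should keep.
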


\begin{proof}
By Condition \eqref{ConditionN}, the coordinates of  $\mathbf{n}$ and $\mathbf{d}$ are $\geq 0$, and this implies our first assertion. Let $\mathbf{y}$ and $\mathbf{z}$ be two sequences of indeterminates. By \eqref{CnSum},
\begin{eqnarray*}
\calC_g(\mathbf{y}+\mathbf{z}) -q\calC_{g-2}(\mathbf{y}+\mathbf{z})
& = &\sum_{k=0}^g\calC_k(\mathbf{y})\calC_{g-k}(\mathbf{z})-q\sum_{k=0}^{g-2}\calC_k(\mathbf{y})\calC_{g-2-k}(\mathbf{z})\\
& = &\sum_{k=0}^g\calC_k(\mathbf{y})\calC_{g-k}(\mathbf{z})-q\sum_{k=2}^{g}\calC_{k-2}(\mathbf{y})\calC_{g-k}(\mathbf{z})\\
& = & \calC_0(\mathbf{y})\calC_{g}(\mathbf{z})+\calC_1(\mathbf{y})\calC_{g-1}(\mathbf{z})+ \sum_{k=2}^g(\calC_k(\mathbf{y})-q\calC_{k-2}(\mathbf{y}))\calC_{g-k}(\mathbf{z}).
\end{eqnarray*}
Now, by applying \eqref{GillesMireille1} with $n = 0$, we get
\begin{equation}
\label{FormuleJac3}
\card{A(\FF_q)} = A_g - q A_{g-2},
\end{equation}
and using \eqref{An},
$$
\card{A(\FF_q)} = \calC_g(\mathbf{n}+\mathbf{d}) -q\calC_{g-2}(\mathbf{n}+\mathbf{d}).
$$
Replacing $\mathbf{y}$ by $\mathbf{n}$ and $\mathbf{z}$ by $\mathbf{d}$, and since \eqref{SumCnCst} implies
$$\calC_{k}(\mathbf{n}) = \binom{N+k-1}{k},$$
we get the required expression for $\card{A(\FF_q)}$. 
\end{proof}

\begin{proposition}
\label{Mino2}
Assume $g \geq 2$. Let $A$ be an abelian variety. Assume that \eqref{ConditionN} holds and that
\begin{equation}
\label{condBorne}
\left(\frac{N-1}{g}+1\right)\left(\frac{N-1}{g-1}+1\right)-q > 0.
\end{equation}
Then
\begin{equation}
\label{Borne-sh}
\tag{\textbf{IV}}
\card{A(\FF_q)} \geq \binom{N+g-1}{g} -q\binom{N+g-3}{g-2}.
\end{equation}
\end{proposition}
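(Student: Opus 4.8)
The plan is to read off the desired lower bound as a single summand in the exact formula for $\card{A(\FF_q)}$ given by Lemma \ref{PreMino}, and then to check that all the remaining summands are nonnegative. First I would write out that identity,
$$
\card{A(\FF_q)} = \calC_{g}(\mathbf{d}) + (N - 1) \calC_{g-1}(\mathbf{d}) + \sum_{k=2}^{g} \calX_{k}(N) \, \calC_{g-k}(\mathbf{d}),
$$
and isolate the term with $k = g$. Since $\calC_{0}(\mathbf{d}) = 1$, that term is exactly $\calX_{g}(N)$, which by the very definition of $\calX_{g}$ equals the right-hand side $\binom{N+g-1}{g} - q\binom{N+g-3}{g-2}$ of the inequality to be proved. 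It therefore suffices to establish that the leftover quantity
$$
\calC_{g}(\mathbf{d}) + (N - 1) \calC_{g-1}(\mathbf{d}) + \sum_{k=2}^{g-1} \calX_{k}(N) \, \calC_{g-k}(\mathbf{d})
$$
is nonnegative.

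By Lemma \ref{PreMino} every factor $\calC_{g-k}(\mathbf{d})$ is $\geq 0$, so I only have to control the signs of the scalars $N-1$ and $\calX_{k}(N)$ for $2 \leq k \leq g - 1$. The key is that hypothesis \eqref{condBorne} forces $N$ to be large. Setting $\phi(k) = \left(\frac{N-1}{k}+1\right)\left(\frac{N-1}{k-1}+1\right)$, condition \eqref{condBorne} reads $\phi(g) > q$. Both factors of $\phi(g)$ are nonnegative for $g \geq 2$, and if $N \leq 1$ each factor is $\leq 1$, giving $\phi(g) \leq 1 < 2 \leq q$, a contradiction; hence $N \geq 2$, and in particular $N - 1 \geq 1 \geq 0$, so the term $(N-1)\calC_{g-1}(\mathbf{d})$ is nonnegative.

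With $N - 1 \geq 1$, both factors of $\phi(k)$ are positive and strictly decreasing in $k$, so $\phi$ is decreasing on $\{2,\dots,g\}$; consequently $\phi(k) \geq \phi(g) > q$ for every $k$ with $2 \leq k \leq g$. Invoking the factored form
$$
\calX_{k}(N) = \binom{N + k - 3}{k - 2}\bigl(\phi(k) - q\bigr),
$$
and noting that for $N \geq 2$ and $k \geq 2$ one has $N + k - 3 \geq k - 2 \geq 0$, so the binomial coefficient is a genuine nonnegative integer, I conclude $\calX_{k}(N) \geq 0$ for all $2 \leq k \leq g$. Every summand in the leftover quantity is thus nonnegative, which yields the claim.

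I expect the delicate point to be precisely this monotonicity-plus-positivity argument for $\calX_{k}(N)$: one must verify that the single sign assumption \eqref{condBorne}, imposed only at $k = g$, propagates to all smaller $k$. The decrease of $\phi$ supplies exactly this propagation, and the preliminary observation that \eqref{condBorne} itself forces $N \geq 2$ is what makes both the monotonicity and the nonnegativity of the binomial coefficients valid. Everything else is a direct reading-off from Lemma \ref{PreMino}.
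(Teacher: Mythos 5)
Your proposal is correct and follows essentially the same route as the paper: both start from the identity in Lemma \ref{PreMino}, identify the $k=g$ term $\calX_{g}(N)\,\calC_{0}(\mathbf{d})=\calX_{g}(N)$ with the right-hand side of \eqref{Borne-sh}, and discard the remaining terms after checking they are nonnegative via \eqref{condBorne} and the nonnegativity of the $\calC_{k}(\mathbf{d})$. The only difference is cosmetic: you make explicit the monotonicity of $\phi(k)$ and the fact that \eqref{condBorne} forces $N\geq 2$, points the paper leaves implicit in its chained inequality $\calX_{k}(N)\geq\binom{N+k-3}{k-2}\bigl(\phi(g)-q\bigr)\geq 0$.
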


\begin{proof}
The right hand side of \eqref{Borne-sh} is equal to $\calX_{g}(N)$, and $\calX_{g}(N) > 0$ if and only if \eqref{condBorne} holds, in which case $N \geq 1$. For $k = 2,\dots, g$, we have
$$
\calX_{k}(N) \geq \binom{N+k-3}{k-2}\left(\left(\frac{N-1}{g}+1\right)\left(\frac{N-1}{g-1}+1\right)-q\right) \geqÊ0,
$$
where the second inequality comes from  (\ref{condBorne}). Applying Lemma \ref{PreMino} we deduce
$$
\card{A(\FF_q)} \geq  \calC_0(\mathbf{n})\calC_{g}(\mathbf{d})+\calC_1(\mathbf{n})\calC_{g-1}(\mathbf{d}) + \calX_{g} \, \calC_{0}(\mathbf{d}) \geq \calX_{g} \,\calC_{0}(\mathbf{d}),
$$
and the result follows, since $\calC_{0}(\mathbf{d}) = 1$.
\end{proof}

\begin{remarks*}
(i) The condition \eqref{condBorne} is satisfied if $N \geq g (q^{1/2} - 1) + 1$. This inequality has to be compared to the Drinfeld-Vl\u{a}du\c{t} upper bound.

(ii) Notice that Proposition \ref{Mino2} can be improved: since
$$C_n(\mathbf{d})=\sum_{b\in\EuScript{P}_n}c(b)\mathbf{d}^b \geq \frac{N_n-N}{n} \,,$$
because the right hand side is the term of the sum corresponding to $b=(0,\dots , 0,1)$, we get
$$ \calC_0(\mathbf{n})\calC_{g}(\mathbf{d})\geq \frac{N_g-N}{g}\quad\mbox{ and }\quad\calC_1(\mathbf{n})\calC_{g-1}(\mathbf{d})\geq N\frac{N_{g-1}-N}{g-1}.$$
Therefore if (\ref{condBorne}) holds, then
$$\card{J_C(\FF_q)} \geq \frac{N_{g}-N}{g}+N\frac{N_{g-1}-N}{g-1}+\binom{N+g-1}{g} -q\binom{N+g-3}{g-2},$$
and the numbers $N_{g}$ and $N_{g - 1}$ can be replaced by their standard lower bounds in order to get a bound improving \eqref{Borne-sh}.

(iii)
Lemma \ref{LMD} provides some others identities than \eqref{FormuleJac1} and \eqref{FormuleJac3}, for instance
$$
A_{2 g - 2} = \card{A(\FF_q)} \pi_{g - 2} + q^{g - 1}.
$$
On the other hand, $Z_{A}(t) < 0$ if $q^{-1} < t < 1$, because $f_{A}(t) \geq 0$ for any $t \in \RR$, and one deduces from \eqref{Center} the inequality
$$
A_{g - 1} \leq \dfrac{\card{A(\FF_q)}}{(q^{1/2} - 1)^{2}} - 2 q^{(g - 1)/2},
$$
as established by S. Ballet, C. Ritzenthaler and R. Rolland in \cite{BRR}. These relations lead to lower bounds similar to \eqref{Borne-gy1} and \eqref{Borne-sh}.
\end{remarks*}

Assume $g \geq 2$. From Theorem \ref{GillesMireille3}, we know that
\begin{equation}
\label{LMDsomme}
\frac{g}{\eta} \, \card{A(\FF_q)} = \sum_{n = 0}^{g - 1} A_{n} + \sum_{n = 0}^{g - 2} q^{g - 1 - n} A_{n},
\end{equation}
where $\eta = \eta(A)$ is the harmonic mean of the numbers $q + 1 + x_{i}$, as in $\S$ \ref{subsec_lower}. We recall from \cite{L-MD} that
\begin{eqnarray}
\label{LMDsigma1}
\eta & \geq & (q^{1/2} - 1)^2, \\
\label{LMDsigma2}
\eta & \geq & \frac{g (q-1)^2}{(g+1)(q+1)-N},
\end{eqnarray}
and \eqref{LMDsigma2} is always tighter than \eqref{LMDsigma1}, though it does depend on $N$.
Moreover, if $q \geq 8$, by Proposition \ref{Harmonic}, we know that
$$
\eta \geq q + 1 - m .
$$
This lower bound is better than the uniform lower bound deduced from \eqref{LMDsigma2}, namely
$$
\eta \geq \frac{g (q-1)^2}{(g+1)(q+1)-N} \geq
\frac{(q - 1)^2}{q + 1 + m} = q + 1 - m - \frac{4 q - m^{2}}{q + 1 + m} \, .
$$

\begin{theorem}
\label{gy2}
If  $g\geq 2$, and if \eqref{ConditionN} holds, then
\begin{equation}
\label{Borne-gy2}
\tag{\textbf{V}}
\card{A(\FF_q)} \geq \frac{\eta}{g} \left[ 
\binom{N + g - 2}{g - 2} +
\sum_{n=0}^{g-1}q^{g-1-n} \binom{N+n-1}{n} \right]
\end{equation}
\end{theorem}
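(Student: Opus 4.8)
The goal is to bound $\card{A(\FF_q)}$ below by combining the exact identity of Theorem \ref{GillesMireille3} with the lower bound for the effective-divisor counts $A_n$ supplied by Lemma \ref{AnBound}. The plan is to start from the formula
\begin{equation*}
\frac{g}{\eta} \, \card{A(\FF_q)} = \sum_{n = 0}^{g - 1} A_{n} + \sum_{n = 0}^{g - 2} q^{g - 1 - n} A_{n},
\end{equation*}
which is valid since $g \geq 2$ and $A = P_A$ lies in $\mathbf{P}_{g}(q)$, and then replace each $A_n$ by its lower bound. Condition \eqref{ConditionN} is exactly the hypothesis needed to invoke Lemma \ref{AnBound}, so for every $n \geq 1$ we have $A_{n} \geq \binom{N + n - 1}{n}$, while $A_0 = 1 = \binom{N-1}{0}$ holds trivially. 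Since $\eta > 0$, dividing through by $g/\eta$ preserves the inequality.

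The key arithmetic step is to reorganize the two resulting sums into the asserted bracketed expression. First I would apply Lemma \ref{AnBound} to the second sum to get
\begin{equation*}
\sum_{n = 0}^{g - 2} q^{g - 1 - n} A_{n} \geq \sum_{n = 0}^{g - 2} q^{g - 1 - n} \binom{N + n - 1}{n},
\end{equation*}
and likewise bound the first sum by $\sum_{n=0}^{g-1}\binom{N+n-1}{n}$. The only point requiring care is matching this against the target: the bound \eqref{Borne-gy2} displays a single sum $\sum_{n=0}^{g-1} q^{g-1-n}\binom{N+n-1}{n}$ together with one extra term $\binom{N+g-2}{g-2}$. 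The term $q^{g-1-(g-1)} = q^0 = 1$ in the geometric sum at $n = g-1$ contributes precisely $\binom{N+g-2}{g-1}$, so the second sum over $n = 0, \dots, g-2$ plus the $n=g-1$ term gives the full sum $\sum_{n=0}^{g-1} q^{g-1-n}\binom{N+n-1}{n}$; what then remains from the \emph{first} sum is its top term $\binom{N+g-2}{g-1}$ together with the lower terms, and after cancellation one is left with the single correction $\binom{N+g-2}{g-2}$. I would verify this bookkeeping by writing both sides explicitly and checking that the lower-order binomial coefficients telescope correctly.

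The main obstacle is therefore purely combinatorial index-matching rather than any analytic difficulty: one must confirm that discarding or retaining the right boundary terms of the two sums reproduces exactly the expression $\binom{N+g-2}{g-2} + \sum_{n=0}^{g-1} q^{g-1-n}\binom{N+n-1}{n}$, and in particular that no positive contribution is being dropped that would weaken the stated bound. Because every $A_n$ bound and every coefficient $q^{g-1-n}$ is nonnegative, the chain of inequalities is monotone throughout, so once the identity of the two bracketed sums is established the result follows immediately upon multiplying back by $\eta/g$. I expect the verification of the term count to be the only place where an off-by-one error could creep in, so I would treat the $n = g-1$ and $n = g-2$ boundary terms with explicit care.
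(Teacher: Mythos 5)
Your proposal is correct and takes essentially the same route as the paper: apply Lemma \ref{AnBound} term by term in the identity \eqref{LMDsomme}, observe that the top term $\binom{N+g-2}{g-1}$ of the unweighted sum is exactly the $n=g-1$ term of the $q$-weighted sum in \eqref{Borne-gy2}, and collapse the remaining terms via $\sum_{n=0}^{g-2}\binom{N+n-1}{n}=\binom{N+g-2}{g-2}$, which is precisely the identity the paper invokes. The bookkeeping you flagged as the only delicate point works out exactly as you describe, so the proof is complete once that identity is written down.
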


\begin{proof}
Since \eqref{ConditionN} holds, we apply the inequality of Lemma \ref{AnBound} in \eqref{LMDsomme}. Noticing that
$$
\sum_{n = 0}^{g - 2} \binom{N + n - 1}{n} = \binom{N + g - 2}{g - 2},
$$
we obtain the result.
\end{proof}

\begin{remark*}
The expression in brackets is $\geq q^{g - 1}$. Since $N \geq 0$, we recover as a corollary the bound \cite[Th. 2(1)]{L-MD}, which does not depend on $N$:
$$\card{J_{C}(\FF_q)} \geq q^{g-1}\frac{(q-1)^2}{(q+1)(g+1)}.$$
\end{remark*}

The right hand side of \eqref{Borne-gy2} is cumbersome. Here is a simpler lower bound using the partial sums of the exponential series. Let
$$
e_n(x)=\sum_{j=0}^n\frac{x^j}{j!}, \quad n \in \NN, \quad x>0.
$$
Then
$$
e_n(x)=e^x\frac{\Gamma(n+1,x)}{n!}, \quad \text{where} \quad 
\Gamma(n,x)=\int_{x}^{\infty}t^{n-1}e^{-t}dt
$$
is the incomplete Gamma function. Since
$$\binom{N+n-1}{n}\geq \frac{N^n}{n!},$$
we get from  Theorem \ref{gy2}:

\begin{corollary}
If $g\geq 2$, then
$$
\card{A(\FF_q)} \geq
\left[ \binom{N + g - 2}{g - 2} + q^{g-1}e_{g-1}(q^{-1}N) \right] \frac{(q-1)^2}{(g+1)(q+1)-N}.
\rlap \qed
$$
\end{corollary}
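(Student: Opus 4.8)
The plan is to derive this estimate directly from Theorem \ref{gy2} by inserting two elementary lower bounds into the right hand side of \eqref{Borne-gy2}: a term-by-term minorization of the binomial coefficients in the inner sum, and the bound \eqref{LMDsigma2} for the harmonic mean $\eta$. Throughout I would keep the standing hypothesis \eqref{ConditionN} of Theorem \ref{gy2}, under which every quantity in sight is nonnegative, so that chaining the inequalities preserves their direction.

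First I would treat the sum $\sum_{n=0}^{g-1}q^{g-1-n}\binom{N+n-1}{n}$. Using $\binom{N+n-1}{n}\geq N^n/n!$, which holds because $\binom{N+n-1}{n}=\frac{1}{n!}\prod_{j=0}^{n-1}(N+j)\geq N^n/n!$, I would factor out $q^{g-1}$ and recognize a partial exponential series:
$$
\sum_{n=0}^{g-1}q^{g-1-n}\binom{N+n-1}{n}\geq q^{g-1}\sum_{n=0}^{g-1}\frac{(q^{-1}N)^n}{n!}=q^{g-1}e_{g-1}(q^{-1}N).
$$
Leaving the term $\binom{N+g-2}{g-2}$ untouched, the bracket in \eqref{Borne-gy2} is therefore at least $\binom{N+g-2}{g-2}+q^{g-1}e_{g-1}(q^{-1}N)$. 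Since the factor $\eta/g$ is strictly positive, this substitution only decreases the right hand side, so the inequality obtained is valid.

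It remains to eliminate $\eta$. Here I would invoke \eqref{LMDsigma2}, which gives $\eta/g\geq (q-1)^2/\big((g+1)(q+1)-N\big)$. Because the bracket $\binom{N+g-2}{g-2}+q^{g-1}e_{g-1}(q^{-1}N)$ is strictly positive, multiplying this lower bound for $\eta/g$ by the bracket preserves the inequality, yielding
$$
\card{A(\FF_q)} \geq \left[\binom{N+g-2}{g-2}+q^{g-1}e_{g-1}(q^{-1}N)\right]\frac{(q-1)^2}{(g+1)(q+1)-N},
$$
as claimed. The entire computation is routine; the only point requiring care is the bookkeeping of signs, namely that the bracket and the factor $\eta/g$ are positive so that both successive minorizations respect the inequality. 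In the range of interest the denominator $(g+1)(q+1)-N$ is moreover positive, as follows from the Weil bound $N\leq q+1+2g q^{1/2}$ together with $(q^{1/2}-1)^2>0$, so the final bound is a genuine positive estimate rather than a vacuous one.
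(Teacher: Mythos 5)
Your proposal is correct and follows exactly the paper's (implicit) argument: apply $\binom{N+n-1}{n}\geq N^n/n!$ inside the sum of Theorem \ref{gy2} to produce $q^{g-1}e_{g-1}(q^{-1}N)$, then replace $\eta/g$ by the lower bound \eqref{LMDsigma2}. The additional checks you make (positivity of the bracket and of the denominator $(g+1)(q+1)-N$ via the Weil bound) are sound and only make explicit what the paper leaves unsaid.
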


\subsection{Discussing the bounds}
\label{subsec_discussion}

Let $C$ be a curve of genus $g \geq 2$ over ${\mathbb F}_q$, with $N = \card{C(\FF_q)}$. We recall the lower bounds for the number of rational points on $J = J_{C}$, respectively obtained from Proposition \ref{BorneSpechtJac}  (with $M(q)$ as defined there), Proposition \ref{firstbound} (with $r$ and $s$ as defined there), Proposition \ref{Mino}, Proposition \ref{Mino2}, and Theorem \ref{gy2}:  
\begin{subeqnarray*}
\eqref{bsj} &
\card{J(\FF_q)} & \geq M(q)^{g} \Bigl(q + 1 + \frac{N - (q + 1)}{g} \Bigr)^{g} \\\\
\eqref{Borne-per} &
\card{J(\FF_q)} & \geq (N -2(r-s) q^{1/2})(q+1+2 q^{1/2})^r(q+1-2 q^{1/2})^s \\\\
\eqref{Borne-gy1} &
\card{J(\FF_q)} & \geq \frac{q-1}{q^g-1} 
\left[\binom{N+2g-2}{2g-1} + \sum_{i=2}^{2g-1}B_i \binom{N+2g-2-i}{2g-1-i}\right] \\\\
\eqref{Borne-sh}  &
\card{J(\FF_q)} & \geq \binom{N+g-1}{g} -q\binom{N+g-3}{g-2} \\\\
\eqref{Borne-gy2} &
\card{J(\FF_q)} & \geq \frac{\eta}{g}
\left[ \binom{N + g - 2}{g - 2}  +\sum_{n=0}^{g-1}q^{g-1-n} \binom{N+n-1}{n}\right] \\
\end{subeqnarray*}

(i) When $q$ is large with respect to $g$, we have, for any abelian variety of dimension $g$,
$$\card{A(\FF_q)} = q^{g} + O(q^{g - \frac{1}{2}}),$$
and \eqref{bsj} and \eqref{Borne-per} are the only bounds to be consistent with this estimate. More precisely, since \eqref{Borne-per} is usually reached for abelian varieties when $q$ is a square, this bound is probably the best one as soon as $g\leq(q- q^{1/2})/2$.

\medskip

(ii) 
Assume that \eqref{ConditionN} holds. Then \eqref{LMDsomme}, joint to the inequality $A_{n} \geq N$ for $n \geq 1$, gives
$$
\frac{g}{\eta} \, \card{A(\FF_q)}  \geq (q^{g - 1} - 1) \frac{N + q - 1}{q - 1} \, .
$$
Using \eqref{LMDsigma1}, we recover \cite[Th. 2(2)]{L-MD}:
\begin{equation}
\label{Borne-lmd}
\card{A(\FF_q)}  \geq  (q^{1/2} - 1)^{2} \frac{q^{g - 1} - 1}{g}\frac{N + q - 1}{q - 1} \, .
\end{equation}
But if $n \geq 1$ and $N \geq 1$, the inequality in Lemma \ref{AnBound} is better than $A_{n}\geq N$, since
$$
\binom{N+n-1}{n} \geq N.
$$
Hence, \eqref{Borne-gy2} is always better than \eqref{Borne-lmd}.

\medskip

(iii) The tables in \cite{L-MD} provide numerical evidence that these bounds can be better than those which hold for general abelian varieties, at least when $q$ is not too large.

\medskip

(iv) Assume $q \geq 4$, and let $A$ be any abelian variety of dimension $g \leq (q - q^{1/2})/2$ with $N_{1} \geq 0$. Then the bounds \eqref{Borne-sh} and \eqref{Borne-gy2} hold for $A$, by Lemma \ref{LargeGenusN}. Likewise, the bound \eqref{Borne-gy1} holds for $A$, by Proposition \ref{LargeGenusB}\eqref{LRG2}.

\medskip

(v) The numerical experiments that we performed lead to the following observations. The bound \eqref{Borne-sh} can be good, even if $g\geq 9$, but, when $g$ is large, \eqref{Borne-gy2} seems to be better than \eqref{Borne-gy1} and \eqref{Borne-sh}, and probably \eqref{Borne-gy2} becomes better than  \eqref{Borne-per} when $g$ is very large.

\section{Jacobian surfaces}
\label{sec_jacsurf}

The characteristic polynomial of an elliptic curve determines the number of its rational points, and vice versa. Therefore, the values of $J_q(1)$ and $j_q(1)$ are given by the Deuring-Waterhouse Theorem (see \cite{deur}, \cite{water}): if $q=p^n$, then
$$
J_{q}(1) = \left\{
\begin{array}{lll}
q + 1 + m & \text{if } n = 1, n \ \text{is even, or } p \not \vert \, m,\\
q + m     & \text{otherwise},
\end{array}
\right.
$$
$$
j_{q}(1) = \left\{
\begin{array}{lll}
q + 1 - m & \text{if } n = 1, n \ \text{is even, or } p \not \vert \, m,\\
q + 2 - m & \text{otherwise}.
\end{array}
\right.
$$
The description of the  set of characteristic polynomials of abelian surfaces has been given by H.G.~R\"uck in \cite{ruck}. The question of describing the set of isogeny classes  of abelian surfaces which contain a Jacobian has been widely studied, especially by J.-P. Serre \cite{serre0}, \cite{serre}, \cite{serre2}, whose aim was to determine $N_q(2)$. A complete answer to this question was finally given by  E. Howe,  E. Nart, and  C. Ritzenthaler in \cite{hnr}. In the remainder of this section, we explain how to deduce from these results the value of $J_q(2)$ and $j_q(2)$. Let $A$ be an abelian surface over $ \FF_q$ of type $[x_1,x_2]$.
Its characteristic polynomial has the form
$$f_A(t)=t^4+a_1t^3+a_2t^2+qa_1t+q^2,$$
with
$$ a_1=x_1+x_2 \quad \text{and} \quad a_2=x_1x_2+2q.$$
By elementary computations, H.G. R\"uck \cite{ruck} showed that  the fact that the roots of $f_A(t)$ are $q$-Weil numbers (i.e. algebraic integers such that their images under every complex embedding have absolute value $q^{1/2}$) is equivalent to
\begin{equation}
\vert a_1\vert\leq 2m \quad \text{and} \quad 2\vert a_1\vert q^{1/2}-2q\leq a_2\leq\frac{a_1^2}{4}+2q. \label{a12}
\end{equation}
We have
\begin{equation}
\card{A(\FF_q)} =f_A(1)=q^2+1+(q+1)a_1+a_2.\label{abeliansurface}
\end{equation}
Table  \ref{tabmax} gives all the possibilities for $(a_1,a_2)$ such that $a_1\geq 2m-2$. Here
$$\varphi_{1} = (- 1 + \sqrt{5})/2, \quadÊ\varphi_{2} = (- 1 - \sqrt{5})/2.$$

\begin{table}[htbp]
\begin{center}
\renewcommand{\arraystretch}{1.5}
\begin{tabular}{| l l | l | l |}
\hline
$a_1$ & $a_2$ & Type & $\card{A(\FF_q)} $\\
\hline
\hline
$2m$ & $m^2+2q$ & $[m,m]$ & $b^2$ \\
\hline
$2m-1$ & $m^2-m+2q$ & $[m,m-1]$ & $b(b - 1)$ \\
 & $m^2-m-1+2q$ & $[m + \varphi_{1},m + \varphi_{2}]$ & $b^2 - b - 1$ \\
\hline 
$2m-2$ & $m^2-2m+1+2q$ & $[m-1,m-1]$ & $(b - 1)^2$ \\
 & $m^2-2m+2q$ & $[m,m-2]$ & $b(b - 2)$ \\
 & $m^2-2m-1+2q$ & $[m-1+\sqrt{2},m-1-\sqrt{2}]$ & $(b - 1)^2 - 2$ \\
 & $m^2-2m-2+2q$ & $[m-1+\sqrt{3},m-1-\sqrt{3}]$ & $(b - 1)^2 - 3$ \\
\hline
\end{tabular}
\end{center}
\caption{Couples $(a_1,a_2)$ maximizing $\card{A(\FF_q)} $, with $b = q + 1 + m$.}
\label{tabmax}
\end{table}
The numbers of points are classified in decreasing order and an abelian variety with $(a_1,a_2)$ not in the table has a number of points strictly less than the values of the table. Indeed, if $-2m\leq a_1< 2m-2$, then
\begin{eqnarray*}
(q+1)a_1+a_2 & \leq & [(q+1)a_1+\frac{{a_1}^2}{4}+2q]\\
& \leq & [(q+1)(2m-3)+\frac{(2m-3)^2}{4}+2q]\\
& = & (q+1)(2m-2)+(m^2-2m-2+2q)+(3-(q+m))\\
& < & (q+1)(2m-2)+(m^2-2m-2+2q)
\end{eqnarray*}
(notice that the function $x\mapsto (q+1)x + (x^{2}/4)$ is increasing on the interval $[-2m,2m-3]$).

In the same way, we build the table of couples $(a_1,a_2)$ with $a_1 \leq - 2 m + 2$. Notice that the ends of the interval containing $a_2$ given by (\ref{a12}) depend only on the value of $a_1$, hence the possible entries for $a_2$ are the same as in the previous table. Here again, the numbers of points are classified in increasing order and an abelian variety with $(a_1,a_2)$ not in the following table has a number of points strictly greater than the values of the table. Indeed, if $-2m+2< a_1\leq 2m$, then
\begin{table}[htbp]
\begin{center}
\renewcommand{\arraystretch}{1.5}
\begin{tabular}{| l l | l | l |}
\hline
$a_1$ & $a_2$ & Type & $A(\FF_q)$ \\
\hline
\hline
$-2m$ & $m^2+2q$ & $[-m,-m]$ & $b'^2$ \\
\hline
$-2m+1$ & $m^2-m-1+2q$ & $[-m+\varphi_{1},-m+\varphi_{2}]$ & $b'^2 - b' - 1$ \\
 & $m^2-m+2q$ & $[-m,-m+1]$ & $b'(b' + 1)$ \\
\hline 
$-2m+2$ & $m^2-2m-2+2q$ & $[-m+1+\sqrt{3},-m+1-\sqrt{3}]$ & $(b' + 1)^2 - 3$ \\
 & $m^2-2m-1+2q$ & $[-m+1+\sqrt{2},-m+1-\sqrt{2}]$ & $(b' + 1)^2 - 2$ \\
 & $m^2-2m+2q$ & $[-m,-m+2]$ & $b'(b' + 2)$ \\
 & $m^2-2m+1+2q$ & $[-m+1,-m+1]$ & $(b' + 1)^2$ \\
\hline
\end{tabular}
\end{center}
\caption {Couples $(a_1,a_2)$ minimizing $\card{A(\FF_q)} $, with $b' = q + 1 - m$.}
\label{tabmin}
\end{table}
\begin{eqnarray*}
(q+1) a_1 + a_2 & \geq & (q + 1) a_1 + 2 \vert a_1 \vert q^{1/2}-2q\\
& \geq & (q+1)(-2m+3)+2(2m-3)q^{1/2}-2q\\
& = & (q+1)(-2m+2)+(m^2-2m+1+2q)
-(2q^{1/2} - m + 1)^2+(q^{1/2}-1)^2\\
& > & (q+1)(-2m+2)+(m^2-2m+1+2q)
\end{eqnarray*}
(notice that the function $x\mapsto (q+1) x + 2\vert x \vert q^{1/2}$ is increasing on the interval $[-2m+3,2m]$).

Most cases of Theorem \ref{Jq2} and \ref{jq2} will be proved in the following way:

\begin{enumerate}
\item Look at the highest row of Table \ref{tabmax} or \ref{tabmin} (depending on the proposition being proved).
\item Check if the corresponding polynomial is the characteristic polynomial of an abelian variety.
\item When it is the case, check if this abelian variety is isogenous to a Jacobian  variety.
\item When it is not the case, look at the following row and come back to the second step.
\end{enumerate}

For the second step, we use the results of H.-G. R\"uck \cite{ruck}  who solved the problem of describing characteristic polynomials of abelian surfaces, in particular the fact that if $(a_1,a_2)$ satisfy (\ref{a12}) and $p$ does not divide $a_2$ then the corresponding polynomial is the characteristic polynomial of an abelian surface. 

For the third step, we use \cite{hnr} where we can find a characterization of isogeny classes of abelian surfaces containing a Jacobian.

The determination of $J_q(2)$ in Theorem \ref{Jq2} is closely related to that of $N_q(2)$, as done by J.-P. Serre \cite{serre2}. In order to simplify the proof of Theorem \ref{jq2}, we use the fact that, given a curve of genus $2$, if we denote by $(a_1,a_2)$ the coefficients associated to its characteristic polynomial, there exists a curve (its quadratic twist) whose coefficients are $(-a_1,a_2)$. This allows us to adapt the proof of Theorem \ref{Jq2}.

Let us recall the definition of special numbers introduced by J.-P. Serre. An odd power $q$ of a prime number $p$ is \textit{special} if one of the following conditions is satisfied (recall that $m=[2 q^{1/2}]$):

\begin{enumerate}
\item $m$ is divisible by $p$,
\item there exists $x\in\ZZ$ such that $q=x^2+1$,
\item there exists $x\in\ZZ$ such that $q=x^2+x+1$,
\item there exists $x\in\ZZ$ such that $q=x^2+x+2$.
\end{enumerate}

\begin{remark*}
In \cite{serre}, J.-P. Serre asserts that if $q$ is prime then the only possible conditions are conditions (2) and (3). When $q$ is not prime, then condition (2) is impossible,  condition (3) is possible only if $q=7^3$ and condition (4) is possible only if $q=2^3$, $2^5$ or $2^{13}$. Moreover, using basic arithmetic, it can be shown (see \cite{lau1} for more details) that conditions (2), (3) and (4) are respectively equivalent to $m^2-4q=-4$, $-3$ and $-7$.
\end{remark*}

\begin{theorem}
\label{Jq2}
The complete set of values of $J_q(2)$ is given by the following display.
\begin{enumerate}
\renewcommand{\theenumi}{\alph{enumi}}
\item
\label{Jq2sq}
Assume that $q$ is a square. Then
$$
J_{q}(2) = \left\{
\begin{array}{lll}
(q + 1 + m)^{2} & \text{if} & q \neq 4,9.\\
55              & \text{if} & q = 4.\\
225             & \text{if} & q = 9.
\end{array}
\right.
$$
\item
\label{Jq2notsq}
Assume that $q$ is not a square. If $q$ is not special, then
$$
J_{q}(2) = (q + 1 + m)^{2}.
$$
If $q$ is special, then
$$
J_{q}(2) = \left\{
\begin{array}{lll}
(q + 1 + m + \varphi_{1})(q + 1 + m + \varphi_{2}) & \text{if} & \{2  q^{1/2}\} \geq \varphi_{1}. \\
(q + m)^2 & \text{if} & \{2  q^{1/2}\} < \varphi_{1}, p \neq 2 \ \text{or} \ p \vert m.\\
(q+1+m)(q-1+m) && \text{otherwise}.
\end{array}
\right.
$$
Here $\varphi_{1} = (- 1 + \sqrt{5})/2, \varphi_{2} = (- 1 - \sqrt{5})/2$.
\end{enumerate}
\end{theorem}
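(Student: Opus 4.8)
The plan is to convert the determination of $J_q(2)$ into a finite, ordered search through the types $[x_1,x_2]$ of abelian surfaces, following the four-step procedure stated just before the theorem. By \eqref{abeliansurface}, $\card{A(\FF_q)}$ is an increasing function of $(q+1)a_1+a_2$, and the displayed estimate for $-2m\le a_1<2m-2$ shows that $(q+1)a_1+a_2$ then falls strictly below its value in the last row of Table~\ref{tabmax}. Hence the largest values of $\card{A(\FF_q)}$ over \emph{all} abelian surfaces occur, in decreasing order, precisely at the rows of Table~\ref{tabmax}. I would run down this table and, at each row, apply two tests: first, whether the listed polynomial really occurs as the characteristic polynomial of an abelian surface (R\"uck's criterion \eqref{a12} together with \cite{ruck}, which needs $p\nmid a_2$ or an appropriate supersingular condition, and the Weil bound $\card{x_i}\le 2q^{1/2}$); second, whether the resulting isogeny class contains a Jacobian (the Howe--Nart--Ritzenthaler classification \cite{hnr}). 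Then $J_q(2)$ is read off at the first row passing both tests, and if every row fails the search continues just below the table (writing $b=q+1+m$ as in Table~\ref{tabmax}).

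The organizing principle is a dichotomy between \emph{simple} and \emph{product} types. Writing $E_{[y]}$ for the elliptic curve of type $[y]$ (with $q+1+y$ points), the rows $[m+\varphi_1,m+\varphi_2]$, $[m-1+\sqrt2,m-1-\sqrt2]$, $[m-1+\sqrt3,m-1-\sqrt3]$ have irreducible characteristic polynomial, so a principally polarized representative is simple, hence indecomposable, hence a Jacobian; for these the second test is automatic and only realizability matters. The remaining rows are products $E_{[y_1]}\times E_{[y_2]}$: here realizability reduces by Deuring--Waterhouse (\cite{deur},\cite{water}) to the existence of both factors---whence the role of $p\mid m$, and, for $p=2$, the parity of $m$---while the Jacobian test genuinely needs \cite{hnr}, since such a class contains a Jacobian only when it carries an indecomposable principal polarization. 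The Weil bound on the larger root explains the threshold $\{2q^{1/2}\}\ge\varphi_1$, which is exactly $m+\varphi_1\le 2q^{1/2}$, the admissibility of the simple type $[m+\varphi_1,m+\varphi_2]$.

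The case analysis then proceeds as follows. If $q$ is a non-square and not special, then $p\nmid m$ and $m^2-4q\notin\{-3,-4,-7\}$, so $[m,m]=E_{[m]}^2$ is realizable and \cite{hnr} puts a Jacobian in its class, giving $(q+1+m)^2$. If $q$ is special, $[m,m]$ fails---either $E_{[m]}$ does not exist ($p\mid m$) or $E_{[m]}^2$ is Jacobian-free ($m^2-4q\in\{-3,-4,-7\}$)---and one descends: when $\{2q^{1/2}\}\ge\varphi_1$ the first admissible row is the simple $[m+\varphi_1,m+\varphi_2]$, giving $(q+1+m+\varphi_1)(q+1+m+\varphi_2)$; when $\{2q^{1/2}\}<\varphi_1$ that simple type is barred by the Weil bound and one reaches $a_1=2m-2$, where $[m-1,m-1]=E_{[m-1]}^2$ succeeds with $(q+m)^2$ unless $p=2$ and $m$ is odd, in which case $m-1$ is even, $E_{[m-1]}$ fails to exist, and the next product $[m,m-2]$ gives $(q+1+m)(q-1+m)$. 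For $q$ a square, $\{2q^{1/2}\}=0$ eliminates the three simple rows outright, leaving only products; generically $E_{[m]}^2$ carries a Jacobian and $J_q(2)=(q+1+m)^2$, while for $q=9$ all rows above $[m-1,m-1]$ are Jacobian-free, whence $J_9(2)=225=(q+m)^2$, and for $q=4$ every admissible product row of the table fails, so the search exits the table and lands on the simple type $[(5-\sqrt5)/2,(5+\sqrt5)/2]$ with $J_4(2)=55$.

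The hard part is the second test: extracting from \cite{hnr} exactly which product isogeny classes fail to contain a Jacobian, and checking that these failures match the ``special'' conditions on $q$ (equivalently $m^2-4q\in\{-4,-3,-7\}$) together with the two sporadic squares $q=4,9$. Concretely, one must confirm case by case that $E_{[m]}^2$ is Jacobian-free precisely in the special and exceptional cases, that the product rows skipped during the descent are genuinely inadmissible or Jacobian-free, and---for $q=4$---that \emph{all} admissible product types of Table~\ref{tabmax} lack a Jacobian, so that the maximum truly drops below the table to the simple type computed by hand. Correctly handling R\"uck's criterion when $p\mid a_2$, i.e.\ the supersingular polynomials not covered by the clean hypothesis $p\nmid a_2$, is the remaining point demanding care; this is also where the analysis parallels, and can borrow from, Serre's determination of $N_q(2)$ in \cite{serre2}.
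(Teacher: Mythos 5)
Your global strategy (descend Table~\ref{tabmax}, test realizability via R\"uck, test for a Jacobian via \cite{hnr}) is the paper's own announced four-step procedure, and most of your case analysis identifies the correct rows. However, two of your structural claims are genuinely false or incomplete, and both affect the proof. The first is the assertion that for the rows with irreducible characteristic polynomial ``a principally polarized representative is simple, hence indecomposable, hence a Jacobian,'' so that ``the second test is automatic.'' This presupposes that every simple isogeny class contains a principally polarized variety, which is not true: there exist isogeny classes of simple abelian surfaces over finite fields containing no principally polarized variety at all, and accordingly the classification in \cite{hnr} includes simple exceptional classes with no Jacobian. So for the rows $[m+\varphi_1,m+\varphi_2]$, $[m-1\pm\sqrt{2}]$, $[m-1\pm\sqrt{3}]$ one must still check against \cite{hnr} (or quote Serre's results), exactly as the paper does for $[m+\varphi_1,m+\varphi_2]$; your shortcut is not a proof, it merely happens to give the right conclusion for these particular rows.

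The second gap is in the ``otherwise'' case $q=2^5,\,2^{13}$. You discard the row $[m-1,m-1]$ solely because $E_{[m-1]}$ does not exist ($p=2$, $m-1$ even). That rules out realizing this type as a product of elliptic curves, but not as an abelian surface: here $p \mid a_2$, and Honda--Tate theory allows the reducible polynomial $(t^2+(m-1)t+q)^2$ to be the characteristic polynomial of a \emph{simple} (non-ordinary) abelian surface. The paper excludes this with a separate ingredient, Prop.~2.5 of \cite{mana}: over $\FF_{2^5}$ (resp.\ $\FF_{2^{13}}$) an abelian variety with all $x_i=m-1$ must have dimension divisible by $5$ (resp.\ $13$). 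Without this step you have not shown $J_q(2)<(q+m)^2$ for these two $q$, so the value $(q+1+m)(q-1+m)$ is unproved; your closing remark about R\"uck's criterion when $p\mid a_2$ gestures at the issue but does not supply the argument. Finally, a practical difference: for $q=4,9$ the paper does not fight through the supersingular exceptions of \cite{hnr} for $[4,4],[4,3],[3,3],[4,2]$ as your plan requires; it caps $a_1$ by the hyperelliptic bound $N\le 2(q+1)$, and for $q=4$ it then orders the classes below the table explicitly ($(a_1,a_2)=(5,14)$ fails since the traces differ by $1$, $(5,13)$ succeeds with $55$ points, and $a_1<5$ gives at most $49$ points). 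Your phrase ``the search exits the table'' needs precisely such an ordering argument below the table, since Table~\ref{tabmax} no longer ranks the candidates there, and your proposal does not supply it.
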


\begin{proof}
\eqref{Jq2sq} Assume that $q$ is a square.

\noindent ---
If $q\neq 4,9$,  $N_q(2)$  is the Serre-Weil bound \cite{serre}, thus there exists a curve of  type $[m,m]$.

\noindent ---
If $q=4$, then $m=4$. First we prove that $J_4(2)\leq 55$. Every curve of genus  $2$ over $\FF_q$ is hyperelliptic, therefore, the number of rational points is at most $2(q+1)=10$. We deduce that a Jacobian of dimension $2$ over  $\FF_4$ must have $a_1\leq 10-(q+1)=5$. 

If $a_1=5$ then $a_2\leq 14$ by \eqref{a12}. An abelian surface over ${\mathbb F}_4$ with $(a_1,a_2)=(5,14)$ is of type $[3,2]$ and is never  a Jacobian  (because $x_1-x_2=3-2=1$, see \cite{hnr}). Thus we have $a_2\leq 13$ and a Jacobian surface over ${\mathbb F}_4$ with $a_1=5$ has at most  $q^2+1+5(q+1)+13=55$ points. If $a_1<5$, then
$$
q^2+1+(q+1)a_1+a_2 \leq q^2+1+(q+1)a_1+\frac{{a_1}^2}{4}+2q \leq 49
$$
(notice that the function $x\mapsto 5x+ (x^{2}/4)$ is increasing on $[-8,4]$, and $a_1\geq -8$). Thus  an abelian surface over ${\mathbb F}_4$ with $a_1<5$ has less than  $55$ points, hence $J_4(2)\leq 55$.

It remains to prove that $J_4(2)\geq 55$. An abelian surface over ${\mathbb F}_4$ with $(a_1,a_2)=(5,13)$ is of type $[3 + \varphi_{1},3 + \varphi_{2}]$. Such an abelian surface exists (because $p=2$ does not divide $13$) and by \cite{hnr} it is isogenous to a Jacobian. This Jacobian has  $q^2+1+5(q+1)+13=55$ points. 

\noindent ---
If $q=9$, then $m=6$. Since $2(q+1)=20$, we must have  $a_1\leq 20-(q+1)=10=2m-2$. The highest row of Table \ref{tabmax} such that $a_1 = 2m-2$ is that with type $[m-1,m-1]$, and this is the type of some Jacobian with $(q+m)^2=225$ points.

\medskip
\eqref{Jq2notsq} Assume that $q$ is not a square. This part of the proof follows easily from Serre's results. He proved in \cite{serre2} the following facts:

\noindent ---
There exists a Jacobian of type $[m,m]$ if and only if $q$ is not special.

\noindent ---
An abelian surface of type $[m,m-1]$ is never a Jacobian.

\noindent ---
If $q$ is special, then there exists a Jacobian of type  $[m + \varphi_{1},m + \varphi_{2}]$ if and only if  $\{ 2 q^{1/2}\}\geq \varphi_{1}$. Note that $\{ 2 q^{1/2}\}\geq \varphi_{1}$ is equivalent to $m + \varphi_{1} \leq 2 q^{1/2}$, thus it is obvious that this condition is necessary.

\noindent ---
If $q$ is special, $\{ 2 q^{1/2}\} < \varphi_{1}$, $p\neq 2$ or $p \vert m$, then there exists a Jacobian of type $[m-1,m-1]$.

\noindent ---
If $q$ is special, $\{ 2 q^{1/2}\} < \varphi_{1}$, $p=2$ and $p \nmid m$, that is, $q=2^5$ or $2^{13}$ (if $q=2^3$, then $\{ 2 q^{1/2}\} \geq \varphi_{1}$), then there exists a Jacobian of type $[m,m-2]$.

It remains to prove that for $q=2^5$ and $2^{13}$, there does not exist a Jacobian of type $[m-1,m-1]$. In fact, when $q=2^5$ and $2^{13}$, an abelian variety with all $x_i$ equal to $(m-1)$ must have a dimension respectively multiple of $5$ and $13$ (see \cite{mana}, Prop. 2.5).
\end{proof}

\begin{theorem}
\label{jq2}
The complete set of values of $j_q(2)$ is given by the following display.
\begin{enumerate}
\renewcommand{\theenumi}{\alph{enumi}} 
\item
\label{jq2sq}
Assume that $q$ is a square. Then
$$
j_{q}(2) = \left\{
\begin{array}{lll}
(q + 1 - m)^{2} & \text{if} & q \neq 4,9.\\
5               & \text{if} & q = 4.\\
25              & \text{if} & q = 9.
\end{array}
\right.
$$
\item
\label{jq2notsq}
Assume that $q$ is not a square. If $q$ is not special, then
$$
j_{q}(2) = (q + 1 - m)^{2}.
$$
If $q$ is special, then
$$
j_{q}(2) = \left\{
\begin{array}{lll}
(q + 1 - m - \varphi_{1})(q + 1 - m - \varphi_{2}) & \text{if} & \{2  q^{1/2}\} \geq \varphi_{1}. \\
(q + 2 - m + \sqrt{2})(q + 2 - m - \sqrt{2}) & \text{if} & \sqrt{2} - 1 \leq \{2  q^{1/2}\} < \varphi_{1}. \\
(q + 1 - m) (q + 3 - m) & \text{if} & \{2 q^{1/2}\} < \sqrt{2} - 1, p \not \vert m \ \text{and} \ q \neq 7^{3}.\\
(q + 2 - m)^{2} && \text{otherwise}.
\end{array}
\right.
$$
\end{enumerate}
\end{theorem}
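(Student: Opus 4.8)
The plan is to mirror the proof of Theorem~\ref{Jq2}, but now reading Table~\ref{tabmin} from the top downward (hence in increasing order of $\card{A(\FF_q)}$) and stopping at the first type that is the type of a Jacobian. The organizing principle is the quadratic twist: a genus~$2$ curve with Frobenius data $(a_1,a_2)$ admits a twist with data $(-a_1,a_2)$, so an isogeny class of type $[x_1,x_2]$ contains a Jacobian if and only if the class obtained by changing $a_1$ into $-a_1$ does. Consequently every fact proved by Serre and by Howe--Nart--Ritzenthaler about the types of Table~\ref{tabmax} transfers to the reflected types of Table~\ref{tabmin}: for instance $[-m,-m]$ is the type of a Jacobian exactly when $q$ is not special, and $[-m,-m+1]$, the reflection of $[m,m-1]$, is never the type of a Jacobian.

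First I would settle the case where $q$ is a square. For $q\neq 4,9$ the Serre--Weil bound is attained, so some curve has type $[m,m]$; its twist has type $[-m,-m]$ and gives exactly $(q+1-m)^2$, the smallest entry of Table~\ref{tabmin}. The values $q=4$ and $q=9$ are genuinely exceptional: every genus~$2$ curve is hyperelliptic, so $N=q+1+a_1\geq 0$ forces $a_1\geq -(q+1)$, and this lower bound on $a_1$ excludes the lightest rows of Table~\ref{tabmin} (for $q=4$ it even excludes the whole table, pushing the minimum to the row $a_1=-(2m-3)$). One then reflects the special analyses of $J_4(2)=55$ and $J_9(2)=225$ to obtain $j_4(2)=5$, coming from the twist of the type realizing $55$, and $j_9(2)=25$, coming from type $[-m+1,-m+1]$.

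For $q$ not a square I would run the row-by-row search, testing each candidate against three conditions. (i) Existence of the $q$-Weil numbers: requiring the most negative root to be $\geq -2q^{1/2}$ produces precisely the thresholds of the statement, namely $\{2q^{1/2}\}\geq\varphi_1$ for the type with $a_1=-2m+1$ and $\{2q^{1/2}\}\geq\sqrt2-1$ for the type $[-m+1+\sqrt2,-m+1-\sqrt2]$; the type $[-m+1+\sqrt3,-m+1-\sqrt3]$ never minimizes, since its feasibility threshold $\sqrt3-1$ exceeds $\varphi_1$ and it is thus dominated by the lighter type with $a_1=-2m+1$. (ii) Existence of the abelian surface, by R\"uck's criterion on the divisibility of $a_2$ by $p$. (iii) Realizability by a Jacobian, through the twisted form of the results of Serre and of \cite{hnr}. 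Combining these, the search stops at type $[-m-\varphi_1,-m-\varphi_2]$ when $\{2q^{1/2}\}\geq\varphi_1$, at $[-m+1+\sqrt2,-m+1-\sqrt2]$ when $\sqrt2-1\leq\{2q^{1/2}\}<\varphi_1$ (the intervening candidate $[-m,-m+1]$ being discarded by~(iii)), at $[-m,-m+2]$ when $\{2q^{1/2}\}<\sqrt2-1$ and this last type is a Jacobian, and at $[-m+1,-m+1]$ otherwise, which yields the four displayed formulas.

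The main obstacle, I expect, is not a single hard idea but the careful case bookkeeping, together with one genuinely new input. Unlike in Theorem~\ref{Jq2}, where the maximal search stops at $[m-1,m-1]$ before ever examining $[m,m-2]$, the minimal search meets the reflected type $[-m,-m+2]$ \emph{before} $[-m+1,-m+1]$, because twisting reverses the order of the number of points within the block $a_1=\pm(2m-2)$. Hence I must determine directly, via \cite{hnr}, exactly when $[-m,-m+2]$ is the type of a Jacobian; this is the source of the conditions $p\nmid m$ and $q\neq 7^3$ that separate the last two regimes. Finally, the residual arithmetic cases (such as $q=2^5,2^{13}$, where a Manin-type divisibility argument \cite{mana} forbids certain types, and $q=7^3$) and the square cases $q=4,9$ must be checked individually, since there the generic increasing order of Table~\ref{tabmin} no longer matches feasibility.
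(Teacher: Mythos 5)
Your proposal is correct and follows essentially the same route as the paper's proof: the quadratic-twist transfer of the Theorem~\ref{Jq2} analysis to Table~\ref{tabmin}, the row-by-row search combining Weil-number feasibility, R\"uck's existence criterion and the characterization of \cite{hnr}, and, crucially, the observation that the minimal search meets $[-m,-m+2]$ \emph{before} $[-m+1,-m+1]$ so that this type must be analyzed directly --- which is exactly what the paper does. The only imprecision is attributing the conditions $p \nmid m$ and $q \neq 7^{3}$ to \cite{hnr}: in the paper they come from Waterhouse's theorem \cite{water} on the existence of elliptic curves of traces $m$ and $m-2$ (combined with the arithmetic of special numbers), with \cite{hnr} needed only to conclude that the product of those two elliptic curves, once they exist, is isogenous to a Jacobian; likewise, the Manin-type divisibility argument of \cite{mana} is required only in Theorem~\ref{Jq2}, not here, since for $q = 2^{5}, 2^{13}$ the minimum falls under the generic third regime.
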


\begin{proof}

\eqref{jq2sq} Assume that $q$ is a square.

---
If $q\neq 4,9$, we saw that there exists a curve of type $[m,m]$, and its quadratic twist is of  type $[-m,-m]$. 

---
If $q=4$, then $m=4$. First we prove that $j_4(2)\geq 5$. We have $a_1\geq -5$ since the quadratic twist of a curve with $a_1<-5$ would have $a_1>5$ and we saw that it is not possible.

If $a_1=-5$ then $a_2\geq 12$ by (\ref{a12}).  An abelian surface over ${\mathbb F}_4$ with $(a_1,a_2)=(-5,12)$ is of type $[-4,1]$ and is never  a Jacobian. Thus $a_2\geq 13$ and a Jacobian surface over ${\mathbb F}_4$ with $a_1=-5$ has at least $q^2+1-5(q+1)+13=5$ points. If $a_1 > -5$, then
$$
q^2+1+(q+1)a_1+a_2 \geq q^2+1+(q+1)a_1+2\vert a_1\vert q^{1/2}-2q = 9 + 5 a_1+4 \vert a_1 \vert \geq 5
$$
(note that the function $x\mapsto 5x+4\vert x \vert$ is increasing on $[-4,8]$). Thus an abelian surface over ${\mathbb F}_4$ with $a_1>-5$ has more than  $5$ points, hence $j_4(2)\geq 5$.

It remains to prove that $j_4(2)\leq 5$. There exists a curve with $(a_1,a_2)=(-5,13)$: the quadratic twist of the curve with $(a_1,a_2)=(5,13)$ in the proof of Theorem \ref{Jq2}. The number of points on its Jacobian is
$$q^2+1-5(q+1)+13=5.$$

--- If $q=9$, then $m=6$. Using the same argument as in the last step, we must have $a_1 \geq -2m+2$. We look at the rows of Table \ref{tabmin}, beginning by the rows on the top, for which $a_1 = -2m+2$. The first two can be ignored since $\{ 2 q^{1/2} \} =0$ is less than $\sqrt 3 -1$ and less than $\sqrt 2-1$. An abelian surface of type $[-m,-m+2]$  is not  a Jacobian (this is an almost ordinary abelian surface, $m^2=4q$ and $m-(m-2)$ is squarefree, see \cite{hnr}). The product  of two copies of an elliptic curve of trace $(m-1)$  is isogenous to a Jacobian (such a curve exists since $3\not\vert (m-1)$).

\medskip
\eqref{jq2notsq} Assume that $q$ is not a square. Using twisting arguments and the proof of Theorem \ref{Jq2}, we see that:

\noindent ---
There exists a Jacobian of type $[-m,-m]$ if and only if $q$ is not special.

\noindent ---
If $q$ is special, there exists a Jacobian of type  $[- m - \varphi_{1},- m - \varphi_{2}]$ if and only if  $\{ 2 q^{1/2} \}\geq \varphi_{1}$.

\noindent ---
An abelian surface of type $[-m,-m+1]$ is never a Jacobian.

In the remainder of the proof, we suppose that $q$ is special and $\{ 2 q^{1/2} \} < \varphi_{1}$.

\noindent ---
In order to have the existence of an abelian surface of type $[-m+1+\sqrt{3},-m+1-\sqrt{3}]$, it is necessary to have  $\{ 2 q^{1/2} \} \geq\sqrt 3 -1$. When $\{ 2 q^{1/2} \} < \varphi_{1}$, this condition is never satisfied  (since $\varphi_{1} < \sqrt 3 -1$).

\noindent ---
In order to ensure the existence of an abelian surface of type $[-m+1+\sqrt{2},-m+1-\sqrt{2}]$,   it is necessary to have   $\{ 2 q^{1/2} \} \geq\sqrt 2 -1$. Suppose that this condition holds. we shall show that there exists an abelian surface of type $[-m+1+\sqrt{2},-m+1-\sqrt{2}]$. We use the same kind of argument that J.-P. Serre used in \cite{serre2}. If $p\vert m$, we are done since $p\not\vert a_2=m^2-2m-1+2q$. Otherwise, $(m-2 q^{1/2} )(m+2 q^{1/2} )=m^2-4q\in\{ -3,-4,-7\}$, hence
$$\{ 2 q^{1/2} \} =2 q^{1/2}  -m=\frac{4q-m^2}{m+2 q^{1/2} }\leq\frac{7}{2m},$$ and if $m\geq 9$, $\frac{7}{2m}<\sqrt{2}-1$.  It remains to consider by hand the powers of primes of the form $x^2+1$, $x^2+x+1$ and $x^2+x+2$ with $m<9$ (i.e. $q<21$). These prime powers are precisely $2$, $3$, $4$, $5$, $7$, $8$, $13$ and $17$. If $q=2,8$, then $\{ 2 q^{1/2} \}\geq \varphi_{1}$. If $q=3$, then $p\vert m$. If $q=4,7,13,17$, then $\{ 2 q^{1/2} \} <\sqrt{2}-1$. If $q=5$, then $m=4$ and $p=5$ do not divide $a_2=m^2-2m-1+2q=17$, and we are done. Finally, using \cite{hnr}, we conclude that this abelian surface is isogenous to a Jacobian.

--- If $\{ 2 q^{1/2} \} <\sqrt 2-1 $, $p\not\vert m$ and $q\neq 7^3$, then $p \not \vert \, (m-2)$. To see this, take $p\neq 2$ (if $p=2$, this is obvious) and use the remark about special numbers in this section. Suppose that $p$ divides $(m-2)$, then $p$ also divides $m^2-4-4q=(m+2)(m-2)-4q$. Since $p\neq 2$, we must have $m^2-4q\in\{-3,-4\}$. If $m^2-4q=-3$, $p$ divides $-3-4=-7$ thus $p=7$. But $q$ is not prime (since for $q=7$, $p\not\vert (m-2)=5$), therefore we must have $q=7^3$ and this case is excluded. If $m^2-4q=-4$, $p$ divides $-4-4=-8$, thus $p=2$ which contradicts our assumption. This proves our assertion, and therefore, there exist elliptic curves of trace $m$ and $(m-2)$ and by \cite{hnr} their product is isogenous to a Jacobian.

--- Suppose that $\{ 2 q^{1/2} \} <\sqrt 2-1 $ and $p\vert m$, or $q=7^3$.  By \cite{water}, if $p\vert m$, there does not exist an elliptic curve of trace $m$ ($q=2$ and $3$ are excluded since in those cases, $\{ 2 q^{1/2} \} \geq\sqrt 2-1 $). If $q=7^3$ (thus $(m-2)=35$) there does not exist an elliptic curve of trace $(m-2)$. Therefore, in both cases, an abelian surface of type $[-m,-m+2]$ cannot exist. 

--- If  $\{ 2 q^{1/2} \} <\sqrt 2-1 $ and $p\vert m$, or $q= 7^3$, there exists a curve of type $[-m+1,-m+1]$: the quadratic twist of the curve of type $[m-1,m-1]$ in the proof of Theorem \ref{Jq2}.
\end{proof}

\end{document}